\title{An algebraic approach to Erd\H{o}s-Ko-Rado sets of flags in spherical buildings}
\author{Jan De Beule, Sam Mattheus, Klaus Metsch}
\date{}
\newtheorem{theorem}{Theorem}[section]
\newtheorem{prop}[theorem]{Proposition}
\newtheorem{cor}[theorem]{Corollary}
\newtheorem{lemma}[theorem]{Lemma}
\newtheorem{prob}[theorem]{Problem}
\theoremstyle{definition}
\newtheorem{defin}[theorem]{Definition}
\newtheorem{remark}[theorem]{Remark}
\newtheorem{exa}[theorem]{Example}
\newtheorem{algo}{Algorithm}
\newtheorem{property}[theorem]{Property}
\def\C{\mathbb C}
\def\PG{\mathrm{PG}}
\newcommand{\qbinom}[2]{\left[{{#1}\atop#2}\right]_q}
\def\GL{\mathrm{GL}}
\def\PGL{\mathrm{PGL}}
\def\PSp{\mathrm{PSp}}
\def\PGO+{\mathrm{PGO}^+}
\def\PGO-{\mathrm{PGO}^-}
\def\PGO{\mathrm{PGO}}
\def\PGU{\mathrm{PGU}}
\def\Sym{\mathrm{Sym}}
\begin{document}

\maketitle

\begin{abstract}
	In this paper, oppositeness in spherical buildings is used to define an EKR-problem for flags in projective and polar spaces. A novel application of the theory of buildings and Iwahori-Hecke algebras is developed to prove sharp upper bounds for EKR-sets of flags. In this framework, we can reprove and generalize previous upper bounds for EKR-problems in projective and polar spaces. The bounds are obtained by the application of the Delsarte-Hoffman coclique bound to the opposition graph. The computation of its eigenvalues is due to earlier work by Andries Brouwer and an explicit algorithm is worked out. For the classical geometries, the execution of this algorithm boils down to elementary combinatorics. Connections to building theory, Iwahori-Hecke algebras, classical groups and diagram geometries are briefly discussed. Several open problems are posed throughout and at the end.
\end{abstract}

\section{Introduction}

Ever since the original result on intersecting families of sets by Erd\H{o}s, Ko and Rado \cite{EKR61}, an abundance of similar `EKR-theorems' in different settings has appeared, with a fitting concept of `intersecting' in each. Along with these results came a plethora of techniques to prove them. Some of the most powerful ones, both in success rate and scope of applicability, lie in the field of algebraic combinatorics and algebraic graph theory in particular. Typically, the problem at hand is restated to the search of the largest cocliques in the graphs of the `non-intersection' relation, for which several tools exist. This approach is highlighted in the recent book by Godsil and Meagher \cite{GM16}, where EKR-theorems for sets, vector spaces, words, partitions and permutations are considered. Other instances where this technique has proven its worth are the results for polar spaces in \cite{IMM18,PSV11,Sta80}. \\

One property that these problems have in common is that they are `symmetrical', not in the sense of a large automorphism group, but because the natural relations between the sets or subspaces of the relevant geometries under consideration are symmetrical. This has implications for the relevant algebraic structures with which one intends to attack these problems. Commonly, the adjacency matrix of the corresponding graph generates a commutative matrix algebra, which is considerably easier to study as opposed to a non-commutative matrix algebra. 

When we drop the symmetry restriction, this nice feature is lost. And yet, researchers have investigated these kinds of problems as well, but with a different toolbox in hand. Notable examples in finite geometry are the papers due to Blokhuis and Brouwer \cite{BB17}, with Sz\H{onyi} \cite{BBS14}, with G\"uven \cite{BBG14} and the paper by the last author and Werner \cite{MW19}. All of these deal with flags in finite geometries, which are sets of pairwise incident elements of the geometry. When the flags have size one and consist of a single element, the natural relations are symmetrical, but as soon as we consider larger flags, we not only lose symmetry, but also the commutativity of the matrix algebras. However, the particular relation of `non-intersection', defined appropriately for each geometry, is still symmetrical. In this paper we will show how to overcome the non-commutativity of the matrix algebras when dealing with general flags in finite geometries and obtain upper bounds for EKR-sets of flags. \\

Some caution needs to be taken when comparing our results to previous papers. It is important to note each time what the relevant notion of `non-intersection' is, and how the corresponding EKR-problem is stated. In this paper, we will consider oppositeness of flags as defined in spherical buildings as our starting point, as explained in \Cref{section:opposition}. Opposition is typically stricter than other analogues of `non-intersection'. For example in \cite{Metsch19} `non-intersection' for subspaces of polar spaces is defined as the usual non-intersection of subspaces. However, non-intersecting subspaces in polar spaces are not necessarily opposite. Another common definition for flags in a projective space is `being in general position'. This is also different, as any two distinct lines in $\PG(2,q)$ are in general position, while they can never be opposite, as we will see later on. This is also made clear in the distinction between oppositeness graphs and Kneser graphs as in G\"uven's thesis \cite{Guven12}, where similar results are obtained. In fact, in the geometries we consider, we will see that opposition of flags can be defined as being in general position with an extra condition. The reason to consider this extra condition is the very nice algebraical consequence we discuss in \Cref{rem:partialflagslosefactor}. \\

The starting point of our investigation is \cite{Brouwer10}. The main theorem in this paper is that the eigenvalues of opposition in spherical buildings are powers of $q$, where $\mathbb{F}_q$ is the field over which the building is defined. The terminology of buildings is used as this is the natural framework to study the combinatorics of flags in finite geometries. Tits \cite{Tits74} showed that the same framework could be approached starting from the classical groups of Lie type (see \Cref{table:classicalgroups}) and considering their actions on flags in the corresponding geometries, at least when the rank of the geometry is at least three. We will take this point of view and avoid terminology from building theory where possible, in order to make the paper more self-contained.

Although it is not mentioned explicitly in \cite{Brouwer10}, an algorithm can be extracted by which one can compute all eigenvalues of opposition. This is the content of \Cref{algorithmmaximal} and \Cref{algorithmpartial}. We will feed these eigenvalues into the Delsarte-Hoffman bound for cocliques in the opposition graph and hence obtain an upper bound for EKR-sets of flags. In fact, to do so we only need the largest and smallest eigenvalue which, given the machinery, can be found by remarkably elementary combinatorics.

The main idea to compute these eigenvalues, which is also implicit in \cite{Brouwer10}, is that the oppositeness relation in a spherical building in fact corresponds to a generator of the Iwahori-Hecke algebra attached to the building. To be more precise: to every spherical building there is a finite Coxeter system $(W,S)$  associated. We can then define an algebra, called the Iwahori-Hecke algebra, with generators indexed by the elements of $W$ and depending on some parameters $\{q_s \,\, | \,\, s \in S\}$. This algebra can be seen as a deformation of the group algebra $\C W$ and has been studied over the last 60 years. A lot of its structure is known in the classical types, including its simple modules and hence its irreducible characters. The main point is that this Iwahori-Hecke algebra is in fact isomorphic to the non-commutative association scheme obtained from the group action of the classical groups on maximal flags. This means that we can obtain the eigenvalues of the opposition relation from the irreducible characters of the Iwahori-Hecke algebra. A recent and more detailed description of the connection between these two different points of view can be found in \cite[Section 4]{Guillot20}. 

Explaining all topics mentioned above in detail is the content of multiple books. We will hence not go in full detail, but try to provide a working knowledge in \Cref{section:generalities} which suffices to run \Cref{algorithmmaximal} and \Cref{algorithmpartial}. The interested reader can find more in-depth material about each of the topics in the following references: for buildings and finite groups with $BN$-pairs we refer to the Tits' book \cite{Tits74} or the more recent account by Garrett \cite{Garrett97}, the book of Geck and Pfeiffer \cite{GP00} for an excellent treatment of Iwahori-Hecke algebras and their representation theory and finally the treatise on diagram geometry \cite{BC13} by Buekenhout and Cohen. \\

In the following sections, we apply this theory to the classical spherical buildings of rank at least three, which are geometries related to the classical groups of Lie type. There are two reasons we restrict ourselves to these groups and not consider the exceptional groups. The first one is that the classical groups consist of infinite families corresponding to buildings of unbounded rank. Therefore, there is something to prove for general rank, as opposed to the exceptional groups. Secondly, while we will find sharp upper bounds for EKR-sets of flags in the classical spherical buildings, we doubt that the same method will produce sharp bounds for the buildings associated to exceptional groups. We will show an example at the end of the paper.

Furthermore, there are two more reasons for the restriction on the rank. As indicated before, buildings of rank three are known to be associated with groups of Lie type by the results of Tits, which means that it is equivalent to define the geometries coming from the groups or from an axiomatic building-theoretical point of view. Since we will try to keep the prerequisite knowledge of building theory to a minimum, we can content ourselves with the discussion of the geometries from the group-theoretical point of view. The second reason is that EKR-problems for buildings of rank two are easy exercises, and hence do not need to be dealt with here.

\section{Generalities}\label{section:generalities}

We first recall some general theory from association schemes in order to demonstrate how Tits' theory of groups with $BN$-pairs fits into the picture. Most of this information is also well explained in \cite[Chapter 10]{BCN89} albeit for flags of one element. We will not make this restriction and talk about the action of classical groups on flags of any type. Then we move on to the correspondence between the association scheme from the group action and the Iwahori-Hecke algebra associated to the classical group.

\subsection{Association schemes from group actions}\label{section:flagstocosets}

Consider a finite group $G$ acting transitively on a finite set $\Omega$. Then the orbitals, which are the orbits of the diagonal action of $G$ on $\Omega \times \Omega$ defined by $g(x,y) := (gx,gy)$, are the relations $\{R_1,\dots,R_m\}$ of an association scheme which we will denote by ${\cal{A}}(G,\Omega)$. One can check that these relations satisfy the axioms of an association scheme, which in general is not commutative. 



We can represent ${\cal{A}}(G,\Omega)$ in another way, entirely contained in $G$. Let $B$ be the stabilizer of a fixed element $x \in \Omega$. Then there is a bijection $\beta: \Omega \rightarrow G/B$ between $\Omega$ and the left cosets of $B$ where $\beta(y) = gB$ if and only if $gx = y$. If $\{g_1,\dots,g_m\}$ is a set of double coset representatives and we identify a pair $(y,z) \in \Omega \times \Omega$ with its image $(\beta(y),\beta(z)) \in G/B \times G/B$, then the relations can be described as 
\[R_i = \{(hB,hg_iB) \,\, | \,\, h \in G \}. \]

Finally, the relations $R_i$ are in bijection with the double cosets $B \backslash G / B$ as the mapping $\Omega \times \Omega \to B \backslash G / B$: $(gB,hB) \to Bg^{-1}hB$ maps relation $R_i$ to $Bg_iB$. In particular, one can see that when $g_i$ is an involution, the relation $R_i$ is symmetric.

To summarize, we can view association schemes ${\cal{A}}(G,\Omega)$ from transitive group actions entirely in the group itself as ${\cal{A}}(G,G/B)$, with the relations in correspondence to the double cosets $B \backslash G / B$, where $B$ is the stabilizer of an arbitrary element in $\Omega$.

\subsection{Actions of classical groups and $BN$-pairs}

In our case, the group $G$ will be a `projective' classical group acting on flags of a corresponding finite geometry. All of these groups are defined over $\mathbb{F}_q$, tacitly assuming that in the case of the projective unitary groups, the prime power $q$ is a square. We record the groups and their geometries below, together with the Cartan notation and the underlying Weyl group, which will be explained later in this section. We will refer to these groups as the classical groups from now on. 

\begin{table}[h!]
	\centering
	\setlength{\tabcolsep}{5mm}
	\def\arraystretch{1.4} 
	\begin{tabular}{l | l | l | l }
		classical group & geometry & Cartan notation & Weyl group \\
		\hline
		$\PGL(n+1,q)$ & projective space & $A_n(q)$ & $A_n$ \\
		$\PGU(2n+1,q)$ & hermitian polar space & $^2A_{2n}(q)$ & $B_n$ \\
		$\PGU(2n,q)$ & hermitian polar space & $^2A_{2n-1}(q)$ & $B_n$ \\
		$\PSp(2n,q)$ & symplectic polar space & $C_n(q)$ & $B_n$ \\
		$\PGO(2n+1,q)$ & parabolic quadric & $B_n(q)$ &  $B_n$ \\
		$\PGO^-(2n+2,q)$ & elliptic quadric & $^2D_{n+1}(q^2)$ & $B_{n}$ \\
		$\PGO^+(2n,q)$ & hyperbolic quadric & $D_n(q)$ & $D_n$ 		
	\end{tabular}
	\caption{The projective classical groups.}
	\label{table:classicalgroups}		
\end{table}


Since we only consider geometries of rank at least three, we can restrict ourselves to $n \geq 3$ in \Cref{table:classicalgroups}, except for the last entry where we assume $n \geq 4$. We should also mention that in the case of $\PGO^+(2n,q)$, we consider the oriflamme complex of the hyperbolic quadric as the geometry, i.e. explicitly making the distinction between the two classes of generators. One could also consider it as a polar space of rank $n$ by not making the distinction. The underlying Weyl group would then have type $B_n$ \cite[Proposition 7.8.9]{BC13}. The left exponent in the Cartan notation of $\PGU(n,q)$ and $\PGO^-(2n,q)$ indicates that these groups are `twisted', while the others are `untwisted'. The precise definition (see \cite{Carter72}) of this notion is not too important for our purposes, but we will use this terminology later on. \\


Each of the above classical groups comes with a transitive action on the flags of their corresponding finite geometries \cite{Tits74}. 
As we saw in the previous section, a transitive action of a finite group on a finite set leads to an association scheme. However, for the action of classical groups, we can say more. The reason is that these are instances of groups with $BN$-pairs. We will recall some properties these groups possess, but refer to \cite{Garrett97} for a precise definition of a group with a $BN$-pair. There it is also explained that groups with $BN$-pairs and buildings are tightly interwoven.

\begin{property}\label{property:BN-pair}
	A group $G$ with a $BN$-pair has two subgroups $B$ and $N$ such that the following properties are satisfied:
	\begin{enumerate}
		\item $B$ and $N$ generate $G$.
		\item $B \cap N = T$ is normal in $N$ and the quotient $W = N/T$ is generated by a set of involutions $S$.
		\item\label{item:Bruhatdecomp} $B \backslash G / B = \sqcup_{w \in W} B\dot{w}B$, where $\dot{w}$ denotes a representative of $w \in W$ in $N$. For the remainder of this paper, we will write $BwB$ instead of $B\dot{w}B$.
		\item\label{item:Weylgroup} $(W,S)$ is a Coxeter system, i.e. $W$ is a group with generators $S$ and relations $(s_is_j)^{m_{ij}} = (s_js_i)^{m_{ij}}$, where $m_{ij} \geq 2$ if $i \neq j$.
		\item\label{item:relationmult} If $\ell:W \rightarrow \mathbb{N}$ denotes the length function on $W$, then for any $w \in W$ and $s \in S$ we have
		\[(BsB)(BwB) \subseteq \begin{cases}
		BswB & \text{if } \ell(sw) > \ell(w) \\
		BswB \cup BwB & \text{if } \ell(sw) < \ell(w).
		\end{cases}\]
		
	\end{enumerate}
The double coset decomposition in (\labelcref{item:Bruhatdecomp}) is called the Bruhat decomposition of $G$, and the group $W$ in (\labelcref{item:Weylgroup}) is called the Weyl group of $G$. The rank of $W$ is defined as $|S|$. The length function $\ell$ in (\labelcref{item:relationmult}) returns for every $w \in W$ its minimal length as a word in the generators $S$. Although a single element could be given by different words, it follows more or less from the fact that the relation $(s_is_j)^{m_{ij}} = (s_js_i)^{m_{ij}}$ has equally many factors on both sides that this function is well-defined.
\end{property}

It is well known that a presentaion of the Weyl groups $A_n$, $B_n$ and $D_n$ appearing in \Cref{table:classicalgroups} can be given by their Dynkin diagrams. This works as follows: a node with label $i$ corresponds to a generator $s_i \in S$. The relations are $s_i^2=1$ and $(s_is_j)^{m_{ij}}=1$ for $i \neq j$,  where $m_{ij}-2$ is the number of lines connecting $s_i$ and $s_j$. 

Since the classical groups are finite, each of the corresponding Weyl groups is finite (which is the defining property of a spherical building) and there is a unique longest word with respect to the length function defined on $W$. This element is commonly denoted by $w_0$ and its length $\ell(w_0)$ is recorded in the last column in the table. We remark that $\ell(w_0)$ as stated is only valid for the untwisted groups. In the Weyl group of the twisted groups, the length function is slightly modified.

These Dynkin diagrams will be of great use later on, as they convey more information than just the presentation of the Weyl group, we will return to this in \Cref{section:opposition}. 

\begin{table}[h!]
	\centering
	\setlength{\tabcolsep}{8mm}
	\def\arraystretch{4} 
	\begin{tabular}{c | c | c}
		Weyl group & Dynkin diagram & $\ell(w_0)$ \\
		\hline
		$A_n$ & \begin{tikzpicture}[very thick, baseline={(N1)},scale=0.8]
		\def\a{1}
		\tikzset{dynkin/.style={circle,draw,minimum size=0.5mm,fill}}
		\path
		(0,0)      node[dynkin] (N1) {} 
		+(90:.5) node{$1$}
		
		++(0:2*\a) node[dynkin] (N2) {} 
		+(90:.5) node{$2$}
		
		++(0:\a)   coordinate (A) ++(0:\a) coordinate (B)
		++(0:\a) node[dynkin] (N3) {}
		+(90:.5) node{${n-1}$}
		
		++(0:2*\a) node[dynkin] (N4) {} 
		+(90:.5) node{$n$};
		
		\draw[dashed] (A)--(B);
		\draw (N1)--(N2)--(A) (B)--(N3)--(N4);
		\end{tikzpicture} & $\dfrac{n(n+1)}{2}$ \\
		
		$B_n$ & \begin{tikzpicture}[very thick,baseline={(N1)},scale=0.8]
		\def\a{1}
		\tikzset{dynkin/.style={circle,draw,minimum size=0.5mm,fill}}
		\path
		(0,0)      node[dynkin] (N1) {} 
		+(90:.5) node{$1$}
		
		++(0:2*\a) node[dynkin] (N2) {} 
		+(90:.5) node{$2$}
		
		++(0:\a)   coordinate (A) ++(0:\a) coordinate (B)
		++(0:\a) node[dynkin] (N3) {}
		+(90:.5) node{${n-1}$}
		
		++(0:2*\a) node[dynkin] (N4) {} 
		+(90:.5) node{$n$};
		
		\draw[dashed] (A)--(B);
		\draw (N1)--(N2)--(A) (B)--(N3);
		\draw[double distance=2pt] (N3)--(N4);
		\end{tikzpicture} & $n^2$ \\
		
		$D_n$ & \begin{tikzpicture}[very thick,baseline={(N1)},scale=0.8]
		\def\a{1}
		\tikzset{dynkin/.style={circle,draw,minimum size=0.5mm,fill}}
		\path
		(0,0)    node[dynkin] (N1) {} 
		+(90:.5) node{$1$}
		
		++(0:2*\a) node[dynkin] (N2) {} 
		+(90:.5) node{$2$}
		
		++(0:\a) coordinate (A) 
		++(0:\a) coordinate (B)
		++(0:\a) node[dynkin] (N3) {}
		+(90:.5) node{$n-2$}
		
		++(20:1.7*\a) node[dynkin] (N4) {} 
		+(90:.5) node{$n-1$}
		
		(N3) ++(-20:1.7*\a) node[dynkin] (N5) {} 
		+(90:.5) node{$n$};
		
		\draw[dashed] (A)--(B);
		\draw (N1)--(N2)--(A) (B)--(N3)--(N4) (N3)--(N5);
		\end{tikzpicture} & $n(n-1)$ \\
	\end{tabular}
	\caption{The Weyl groups $A_n$, $B_n$ and $D_n$.}
	\label{table:weylgroups}
\end{table}

\begin{exa}\label{exa:GLexample1}
	One can consider $G = \PGL(n+1,q)$ of type $A_n$ with $B$ the subgroup of upper triangular matrices and $N$ the subgroup of monomial matrices, i.e. matrices with exactly one non-zero element in every row and column. Then $T = B \cap N$ is the subgroup of diagonal matrices and one can see that the corresponding Weyl group $N/T$ 
	is isomorphic to the symmetric group $\Sym(n+1)$. In fact, we will identify the Weyl group of type $A_n$ with $\Sym(n+1)$, with the adjacent transposition  $s_i = (i, i+1)$, $1 \leq i \leq n-1$, as the generating involutions.	
\end{exa}

In this example, we can explicitly infer the action of the Weyl group on the geometry. This is harder to do in general, but the upshot is that it is in fact not necessary. One of the key ideas from the theory of buildings is that we can investigate the action of the Weyl group on the whole geometry by considering its action on a specific substructure, which we will refer to as a frame. In the example above, a projective frame consists of $n+1$ linearly independent vectors $\{e_1,\dots,e_{n+1}\}$ in the underlying vector space $V(n+1,q)$, and the action of $\Sym(n+1)$ is the natural action on the indices. We will work out this example for $n=2$ in more detail later on in \Cref{exa:GLexample2}.

Let's return to the association scheme ${\cal{A}}(G,G/B)$, where $G$ is a classical group as in \Cref{table:classicalgroups} and $B$ the stabilizer of a maximal flag. In fact, through the correspondence of buildings and groups with $BN$-pairs \cite[Chapter 5]{Garrett97}, this subgroup $B$ is also one of two parts of a $BN$-pair. The subgroup $N$ on the other hand is the stabilizer of an apartment (i.e. a thin subgeometry of the same type \cite[Definition 6.5.3]{BC13}), in the language of buildings. We saw that the double cosets $B \backslash G / B$ correspond to the relations in the scheme ${\cal{A}}(G,G/B)$. But now we know more:
\begin{enumerate}
	\item The relations in ${\cal{A}}(G,G/B)$ are indexed by the underlying Coxeter group $W$. We can therefore write $\{R_w \,\, | \,\, w \in W\}$ instead of $\{R_1,\dots,R_m\}$.
	\item If $(x,y) \in R_{w_1}$ and $(y,z) \in R_{w_2}$, we can determine which relations are possible for $(x,z)$ by (\labelcref{item:relationmult}).
\end{enumerate}

\begin{exa}\label{exa:GLexample2}
	Although we usually assume $n \geq 3$ for type $A_n$, retake \Cref{exa:GLexample1} with $n = 2$ for the sake of simplicity. We have the action of $\PGL(3,q)$ on the subspaces of $\PG(2,q)$. The subgroup $B$ can be seen as the stabilizer of the standard flag $\{\langle e_1 \rangle, \langle e_1,e_2 \rangle, \langle e_1,e_2,e_3 \rangle\}$, where $e_i$ is the $i$-th basis unit vector. The last element of every maximal flag is $\PG(2,q)$ itself, so we can omit this and talk about \{point, line\}-flags. 
	
	The underlying Coxeter group is $\Sym(3)$ and hence it follows that there are $|\Sym(3)|=6$ relations on \{point, line\}-flags in $\PG(2,q)$, which can be seen in the table below. To figure out what the geometrical interpretation of the double coset $BwB$ with $w \in \Sym(3)$ is, one needs to look at the action of $w$ on $e_i$ defined by $we_i = e_{w(i)}$. For example, $s_1s_2 = (1\,\, 2 \,\, 3)$ maps the standard flag to $\{\langle e_2 \rangle, \langle e_2,e_3 \rangle, \langle e_1,e_2,e_3 \rangle\}$, which means that the relation between two flags corresponding to $Bs_1s_2B$ is the relation where the point of the second flag is on the line of the first but not vice versa. This already shows that ${\cal{A}}(G,G/B)$ is not symmetric.
	
\begin{table}[h]
	\begin{center}
		\begin{tabular}{c | c }	
		double coset	& geometric interpretation	\\
		\hline
		$B1B$			& $p_1=p_2,\ell_1=\ell_2$  \\
		$Bs_1B$			& $p_1 \neq p_2,\ell_1=\ell_2$  \\
		$Bs_2B$			& $p_1=p_2,\ell_1 \neq \ell_2$  \\
		$Bs_1s_2B$		& $p_1 \notin \ell_2, p_2 \in \ell_1$ \\
		$Bs_2s_1B$		& $p_1 \in \ell_2, p_2 \notin \ell_1$ \\
		$Bs_1s_2s_1B$	& $p_1 \notin \ell_2, p_2 \notin \ell_1$
		\end{tabular}
	\caption{The 6 possibilities for the mutual position of two flags $\{p_1,\ell_1\}$ and $\{p_2,\ell_2\}$ in $\PG(2,q)$. Remark that $s_1s_2s_1 = s_2s_1s_2$.}
	\label{table:example}
	\end{center}
\end{table}

One can now check, both algebraically by (\labelcref{item:relationmult}) in \Cref{property:BN-pair} and geometrically, that if $(\{p_1,\ell_1\},\{p_2,\ell_2\}) \in R_{s_1}$ and $(\{p_2,\ell_2\},\{p_3,\ell_3\}) \in R_{s_2}$ then $(\{p_1,\ell_1\},\{p_3,\ell_3\}) \in R_{s_1s_2}$.

\end{exa}

\subsection{Iwahori-Hecke algebras}

For an association scheme we have a corresponding Bose-Mesner algebra, given by basis matrices $A_1,\dots,A_m$. We know by definition that the product $A_iA_j$ can again be expressed as a linear combination in the basis matrices: 
\[A_iA_j \in \langle A_1,\dots,A_m \rangle.\]
This information is encoded in the complex product of two matrices $A_i$ and $A_j$, as defined in \cite{Zieschang96}, which returns the basis matrices $A_k$ appearing in this linear combination with non-zero coefficients. This provides us some qualitative information on how the basis matrices are related. However, we are often, if not always, interested in the coefficients $p_{ij}^k$, also called the intersection numbers:
\[A_iA_j =\sum_{k=1}^m p_{ij}^kA_k.\]

When we look back at the previous section, we see that in fact (\labelcref{item:relationmult}) in \Cref{property:BN-pair} corresponds to the complex multiplication of basis relations in the association scheme $\mathcal{A}(G,G/B)$, where $G$ is a classical group with a $BN$-pair. We now would like to make this information more precise, by specifying the intersection numbers. This is the content of the next result, due to Iwahori and Matsumoto. We refer to \cite[Section 8.4]{GP00} in which this result and others can be found, along with more historical context.

\begin{prop}
	Let $\{A_w \,\, | \,\, w \in W\}$ be the adjacency matrices of the relations $R_w$ defined in the previous section for a group $G$ with a $BN$-pair and underlying Coxeter group $W$. Then we have the following multiplication rules for all $s \in S$ and $w \in W$:	
	
	\[A_sA_w = \begin{cases}
	A_{sw} & \text{if } \ell(sw) > \ell(w) \\
	q_sA_{sw}+(q_s-1)A_w & \text{if } \ell(sw) < \ell(w),
	\end{cases}\]
	where $q_s = |BsB/B|$ for all $s \in S$.
\end{prop}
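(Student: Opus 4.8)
The plan is to prove the stated multiplication rules by translating them into counting problems about $B$-cosets and then bootstrapping from two elementary inputs: the constant row sums of the matrices $A_w$ and the set-theoretic inclusions already recorded in (\labelcref{item:relationmult}) of \Cref{property:BN-pair}. First I would note that, by $G$-invariance of the relations, the entry $(A_sA_w)_{B,vB}$ depends only on the relation $R_v$ containing $(B,vB)$ and equals the intersection number
\[p_{sw}^v=\#\{\,kB\ :\ k\in BsB,\ k^{-1}v\in BwB\,\}.\]
If this count is nonzero then $v=k\cdot(k^{-1}v)\in BsB\cdot BwB$, so by the inclusions in (\labelcref{item:relationmult}) the only relations $R_v$ that can occur are $R_{sw}$ (when $\ell(sw)>\ell(w)$) and $R_{sw},R_w$ (when $\ell(sw)<\ell(w)$). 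Hence $A_sA_w$ is automatically supported on the claimed basis matrices, and everything reduces to computing the coefficients.

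Second, the essential quantitative tool is that each $A_w$ has constant row sum $q_w:=|BwB/B|$, so that $A_w\mathbf 1=q_w\mathbf 1$ and consequently $A_sA_w\mathbf 1=q_sq_w\mathbf 1$. In the case $\ell(sw)>\ell(w)$ we therefore have $A_sA_w=c\,A_{sw}$ for a single scalar $c$, and comparing row sums gives $c\,q_{sw}=q_sq_w$, i.e.\ $c=q_sq_w/q_{sw}$. To finish the up-case I would invoke the multiplicativity $q_{sw}=q_sq_w$, valid precisely because $\ell(sw)=\ell(s)+\ell(w)$; more generally $q_w=\prod_i q_{s_i}$ for any reduced expression $w=s_1\cdots s_r$. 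This yields $c=1$ and hence $A_sA_w=A_{sw}$.

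For the case $\ell(sw)<\ell(w)$ I would reduce to the up-case together with a rank-one computation. Writing $w'=sw$, we have $\ell(w')=\ell(w)-1$ and $\ell(sw')=\ell(w)>\ell(w')$, so the up-case already gives $A_sA_{w'}=A_{sw'}=A_w$. Associativity then yields $A_sA_w=A_s^2A_{w'}$, so it remains to evaluate $A_s^2$. A direct coset count gives $p_{ss}^1=q_s$ (using $(BsB)^{-1}=BsB$, since $s$ is an involution), and the row-sum identity $p_{ss}^1+q_sp_{ss}^s=q_s^2$ forces $p_{ss}^s=q_s-1$; thus $A_s^2=q_sA_1+(q_s-1)A_s$. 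Substituting, and using both that $A_1$ is the identity and $A_sA_{w'}=A_w$ once more, gives $A_sA_w=q_sA_{w'}+(q_s-1)A_w=q_sA_{sw}+(q_s-1)A_w$, as desired.

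The main obstacle is the up-case coefficient being exactly $1$, equivalently the multiplicativity $q_{sw}=q_sq_w$ (or, concretely, the bijectivity of the product map $BsB/B\times BwB/B\to BswB/B$ when the lengths add). This is the only place where reducedness is genuinely used, and it is most cleanly established by induction on $\ell(w)$, interlocked with the up-case statement itself; I would cite \cite[Section 8.4]{GP00} for the detailed verification. Once this lemma is in hand, both the down-case and the rank-one relation $A_s^2=q_sA_1+(q_s-1)A_s$ are purely formal consequences of the row-sum bookkeeping and the inclusions in \Cref{property:BN-pair}.
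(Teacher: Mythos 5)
Your proposal is correct, and it is worth noting that the paper does not actually prove this proposition at all: it attributes the result to Iwahori and Matsumoto and simply refers to \cite[Section 8.4]{GP00}. Your derivation is therefore genuinely different in that it is nearly self-contained. All your steps check out: the entries of $A_sA_w$ are constant on each orbital by $G$-invariance; the support claim follows from (\labelcref{item:relationmult}) of \Cref{property:BN-pair}; the row-sum identity $A_w\mathbf 1=|BwB/B|\,\mathbf 1$ pins down the coefficients; the relation $A_s^2=q_sA_1+(q_s-1)A_s$ follows from $(BsB)^{-1}=BsB$ plus row sums; and the down-case is then a formal consequence of the up-case applied to $w'=sw$ together with associativity. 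What your route buys is transparency about where the real content sits: everything is bookkeeping except the multiplicativity $q_{sw}=q_sq_w$ when $\ell(sw)>\ell(w)$, and citing \cite[Section 8.4]{GP00} for that lemma is no worse than what the paper does for the entire statement, so there is no circularity or loss. If you wish to close even that gap, it follows from the axioms you already invoke: the inclusion in (\labelcref{item:relationmult}) upgrades to the equality $(BsB)(BwB)=BswB$, since the left side is a union of $(B,B)$-double cosets containing $\dot{s}\dot{w}$ and is contained in $BswB$ by the axiom. Writing $BsB=\sqcup_i k_iB$ and $BwB=\sqcup_j m_jB$ then gives $BswB=\bigcup_{i,j}k_im_jB$, and if $k_im_jB=k_{i'}m_{j'}B$ with $k_iB\neq k_{i'}B$, then $k_{i'}^{-1}k_i\in BsB$, so that $m_{j'}^{-1}(k_{i'}^{-1}k_i)m_j$ lies both in $B$ and in $(Bw^{-1}B)(BswB)$; factoring this membership produces an element of $BwB\cap BswB$, contradicting the disjointness of the Bruhat decomposition (\labelcref{item:Bruhatdecomp}). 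Hence the $q_sq_w$ cosets $k_im_jB$ are pairwise distinct, $q_{sw}=q_sq_w$, and your argument becomes fully self-contained.
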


The set $BsB/B$ is the set of left cosets contained in $BsB$, which can also be seen as the set of left cosets in relation $R_s$ with $B$.
It is known that $q_s = q_t$ whenever $s$ and $t$ are conjugate in $W$.
Moreover, $q_s = q^{c_s}$, for some $c_s \in \mathbb{N}$, where $q$ is the order of the underlying field of the classical group. When the group is untwisted $q_s = q$ for all $s \in S$.

\begin{exa}\label{exa:structureconstantA2}
	For the earlier example of $\PGL(3,q)$ we find $q_s = |BsB/B| = q$ for all $s \in S$. This number is also the valency of the relation $R_s$, which can be seen from \Cref{table:example}.
\end{exa}

\begin{remark}\label{remark:ihalgebraasqdeformation}
	If we replace $q_s$ by 1 in the multiplication rules above, we would obtain an algebra that is isomorphic to $\C W$. This is no coincidence, as both the association scheme and the group algebra $\C W$ can be seen as specializations of a more general algebra, which is a generic Iwahori-Hecke algebra \cite[Section 8.1]{GP00}. Intuitively, one can see the association scheme $\mathcal{A}(G,G/B)$ as a deformation or $q$-analog of $\C W$. 
\end{remark}

The connection between the association scheme for flags of incidence geometries and Iwahori-Hecke algebras has been pointed out before. The earliest reference we could find is due to Ott \cite[p108]{Ott81}. For a more recent account, we refer to \cite{Guillot20}, where it is reproven that if we remove groups from the equation and focus on the combinatorial side, the adjacency algebra for buildings obtained is the Iwahori-Hecke algebra as above. In summary, the main advantage of this connection is that the foundations for the representation theory of Iwahori-Hecke algebras has been studied intensively in the last 50 years and is immediately applicable to the combinatorial problem we consider.

\subsection{Opposition in spherical buildings}\label{section:opposition}

The connections we made in the previous sections will now be put to use. Let $G$ be one of the classical groups, $W$ its underlying Weyl group of rank $n$ and $w_0$ the longest word in $W$. 

\begin{defin}\label{def:opposition}
	The relation $R_{w_0}$ in $\mathcal{A}(G,G/B)$ is called the \textbf{opposition relation}.
\end{defin} 

Remark that $w_0$ is an involution (as $w_0^{-1}$ has the same length as $w_0$). Therefore, $w_0 = w_0^{-1}$ and we conclude that opposition is a symmetric relation. Although this definition is made in terms of the group $G$, it in fact corresponds to a geometrical notion of `far awayness' of maximal flags of which one can see an example in \Cref{table:example}. 

This definition can be extended to partial flags. To do so, we need the notion of type and cotype of a partial flag.

\begin{defin}\label{def:type}
	Let $\Gamma$ be the geometry corresponding to $G$. The \textbf{type of a subspace} in $\Gamma$ is its vector space dimension. The \textbf{type of a flag} is the set of types of the subspaces appearing in the flag. The \textbf{cotype of a flag} is its complement in $[n]:=\{1,\dots,n\}$.
\end{defin}

Whenever we use the term `dimension' in the remainder of this paper, we refer to the vector space dimension so that it coincides with the type. The type or cotype of a partial flag can also be defined in terms of $G$ and its Weyl group $W$.

\begin{defin}
	For every $J \subseteq [n]$, we can define a \textbf{parabolic subgroup} $P_J$ of $G$ by $P_J := BW_JB$, where $W_J := \langle s_i \,\, | \,\, i \in J \rangle$.
\end{defin}

It turns out that $P_J$ is the stabiliser of a flag of cotype $J$ \cite{Garrett97}. In fact, it is the stabiliser of the unique flag of cotype $J$ contained in the maximal flag stabilised by $B$. In other words, we can find a bijection between flags of cotype $J$ and $G/P_J$. Remark that when $J = \emptyset$, we retrieve the original bijection between maximal flags and cosets in $G/B$.

This bijection allows us to identify the cotype $J \subseteq [n]$ of a flag with a set of generators $\{s_i \,\, | \,\, i \in J\} \subseteq S$. We will therefore refer to the latter set as the cotype of the flag as well. In this way, since $w_0$ acts on $S$ by conjugation \cite[Lemma 1.5.3]{GP00}, we can say that $w_0$ acts on types as well. Furthermore, this action can most easily be seen by looking at the Dynkin diagram as generators correspond to nodes. In the case of $A_n$ and $D_n$, $n$ odd, the action of $w_0$ corresponds to the unique diagram automorphism of order 2. In the case of $B_n$ and $D_n$, $n$ even, it is the identity.

\begin{defin}\label{def:oppositionofflags}
	Two partial flags of cotypes $J$ and $K$ are \textbf{opposite} if and only if $J^{w_0} = K$ and they can be extended to two maximal opposite flags.
\end{defin}

Alternatively, we can define oppositeness of two partial flags of cotypes $J$ and $K$ as in \cite{Brouwer10} by first identifying them as two cosets $gP_J$ and $hP_K$ and then requiring that $P_Kh^{-1}gP_J = P_Kw_0P_J$. 

\begin{exa}
	Retake the running example $G = \PGL(3,q)$. From our previous calculations, we see that $s_1$ stabilises the line $\langle e_1,e_2 \rangle$, which is a flag of cotype $\{1\}$ or $\{s_1\}$, and $s_2$ stabilizes the point $\langle e_1 \rangle$, a flag of cotype $\{2\}$ or $\{s_2\}$. The element $w_0$ in $\Sym(3)$ is $s_1s_2s_1 = (1 \,\, 3)$ and hence $s_1^{w_0} = s_2$ as expected. From \Cref{table:example}, we deduce that two maximal flags are opposite if and only if the point (resp. the line) of the first is not incident with the line (resp. the point) of the second. For partial flags we see that an element of cotype $\{s_1\}$, i.e. a line, can only be opposite to an element of cotype $\{s_2\}$, i.e. a point. This happens whenever the point and line are not incident.
\end{exa}  

Although oppositeness of flags is defined as a global property, the previous example shows that checking whether two given \{point, line\}-flags in $\PG(2,q)$ are opposite can be done locally, or `elementwise', by running over the elements of the first flag and checking incidence with the element of opposite type of the second flag. This is an instance of a more general phenomenon, which we can phrase on the level of the groups. This property can also be seen geometrically, but would require a separate discussion for each type of Weyl group and will be made clear in later sections.

\begin{lemma}\label{lemma:oppositenesslocally}
	Let $gB, hB \in G/B$ then the following are equivalent:
	\begin{enumerate}
		\item $Bh^{-1}gB = Bw_0B$,
		\item $P_Jh^{-1}gP_K = P_Jw_0P_K$ for all pairs $(P_J,P_K)$ of parabolic subgroups s.t. $J^{w_0} = K$,
		\item $P_Jh^{-1}gP_K = P_Jw_0P_K$ for all pairs $(P_J,P_K)$ of maximal parabolic subgroups s.t. $J^{w_0} = K$,
	\end{enumerate}
\end{lemma}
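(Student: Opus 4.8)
The plan is to transport the entire statement from $G$ into the Weyl group $W$ and reduce it to a one-line combinatorial fact. By the Bruhat decomposition (\labelcref{item:Bruhatdecomp}) the element $h^{-1}g$ lies in a unique cell $BwB$, and the first thing to record is that for any standard parabolics $P_J,P_K$ one has $P_J h^{-1}g P_K = P_J w P_K$ (since $B\subseteq P_J,P_K$, the product depends only on the cell of $h^{-1}g$). I would then invoke the standard bijection $P_J \backslash G / P_K \leftrightarrow W_J \backslash W / W_K$ for groups with a $BN$-pair \cite{Garrett97}, under which $P_J w P_K$ is exactly the union of the Bruhat cells indexed by $W_J w W_K$. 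Two such unions coincide iff their index sets do, so for every pair with $J^{w_0}=K$ the equality $P_J h^{-1}g P_K = P_J w_0 P_K$ is equivalent to the purely Weyl-group statement $W_J w W_K = W_J w_0 W_K$. In these terms condition (1) reads $w = w_0$, while (2) and (3) are families of such double-coset equalities.

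The key observation is that the target double coset collapses. Since $K = J^{w_0}$ we have $W_K = w_0 W_J w_0$, and because $w_0$ is an involution this yields $w_0 W_K = W_J w_0$; hence
\[ W_J w_0 W_K = W_J (w_0 W_K) = W_J W_J w_0 = W_J w_0, \]
a single left coset. Note this holds for \emph{every} $J$ with $J^{w_0}=K$, not only maximal ones. Consequently $W_J w W_K = W_J w_0 W_K$ holds if and only if $w \in W_J w_0$, i.e.\ if and only if $w w_0 \in W_J$ (again using $w_0 = w_0^{-1}$). Translating, condition (2) becomes ``$w w_0 \in W_J$ for all $J$ with $J^{w_0}=K$'' and condition (3) becomes ``$w w_0 \in W_{S\setminus\{s\}}$ for every $s\in S$''. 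The implications $(1)\Rightarrow(2)\Rightarrow(3)$ are now immediate: if $w=w_0$ then $w w_0 = 1$ lies in every subgroup, and (3) is just the restriction of (2) to the maximal subsets $J=S\setminus\{s\}$.

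For the remaining implication $(3)\Rightarrow(1)$ I would intersect the conditions over all $s$. Using the standard fact that standard parabolic subgroups of a Coxeter system satisfy $W_{J_1}\cap W_{J_2}=W_{J_1\cap J_2}$, one gets $\bigcap_{s\in S} W_{S\setminus\{s\}} = W_\emptyset = \{1\}$, so $w w_0 = 1$, that is $w = w_0$, which is (1). I expect the main obstacle to be bookkeeping rather than depth: one must justify carefully the dictionary of the first paragraph (that $P_J w P_K$ is a union of Bruhat cells indexed by $W_J w W_K$, and that $P_J h^{-1}g P_K$ sees only the cell of $h^{-1}g$), and one must fix the conjugation convention behind $J^{w_0}=K$ so that it really gives $W_K = w_0 W_J w_0$. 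Once this translation is in place, the collapse $W_J w_0 W_K = W_J w_0$ together with the intersection of maximal parabolics does all the work, and the argument is short.
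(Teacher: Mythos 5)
Your proof is correct, and at its core it runs on the same two Coxeter-group facts as the paper's proof: the collapse of the target double coset (your $W_Jw_0W_K=W_Jw_0$ via $w_0W_K=W_Jw_0$ is exactly the paper's observation $uw_0v=w_0u^{w_0}v\in w_0W_K$) and the triviality of the intersection of all maximal standard parabolic subgroups of $W$ (both proofs invoke $W_{J_1}\cap W_{J_2}=W_{J_1\cap J_2}$). Where you genuinely diverge is in how the problem is moved from $G$ to $W$. You front-load the full parabolic Bruhat decomposition $P_J\backslash G/P_K\leftrightarrow W_J\backslash W/W_K$ (citing Garrett), so that every instance of conditions (1)--(3) becomes literally a double-coset equality in $W$, after which all three implications are one-line Coxeter combinatorics; this also yields a sharper intermediate statement, namely that $P_Jh^{-1}gP_K=P_Jw_0P_K$ holds for a given pair $(J,K)$ if and only if $ww_0\in W_J$, where $h^{-1}g\in BwB$. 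The paper never uses that correspondence: it proves $(1)\Rightarrow(2)$ by simply multiplying the coset equation on both sides, and for $(3)\Rightarrow(1)$ it works with one-sided containments of double cosets in $G$, expanding $(BW_JB)(Bw_0B)(BW_KB)$ via the multiplication rule (\labelcref{item:relationmult}) of \Cref{property:BN-pair} and concluding from disjointness of Bruhat cells. So the trade-off is: your argument is cleaner and more symmetric but imports a heavier standard theorem, while the paper's stays self-contained relative to the axioms it lists, needing only containments rather than the exact indexing of $P_JwP_K$ by $W_JwW_K$. Both are complete; just make sure, as you note yourself, to fix the conjugation convention so that $J^{w_0}=K$ gives $W_K=w_0W_Jw_0$ (harmless here since $w_0^2=1$), and to verify that distinct Weyl double cosets index disjoint unions of Bruhat cells, which is what makes "the unions coincide iff the index sets do" legitimate.
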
  

Remark that a maximal parabolic subgroup $P_J$, where $J = S \setminus \{i\}$, corresponds to the stabilizer of a subspace of type $i$ in the geometry. Therefore, we can identify the cosets $gP_J$ with the $i$-dimensional spaces in the corresponding geometry. 

\begin{proof}
	Multiplying the equation $Bh^{-1}gB = Bw_0B$ on the left by $P_J$ and on the right by $P_K$ shows that $1) \Rightarrow 2)$. The implication $2) \Rightarrow 3)$ is immediate.
	So suppose that $3)$ holds. As $B \leq P_J$, for any parabolic subgroup $P_J$ we have $Bh^{-1}gB \subseteq P_Jh^{-1}gP_K$. Moreover, by the definition of the parabolic subgroup, we know that $P_J = BW_JB = \cup_{w \in W_J}BwB$. Therefore, $P_Jh^{-1}gP_K = P_Jw_0P_K = (BW_JB)(Bw_0B)(BW_KB)$ and the latter can be expressed as a union of double cosets by the multiplication rule (\labelcref{item:relationmult}) in \Cref{property:BN-pair}. Combining all of this, we find that for any pair $(P_J,P_K)$ of maximal parabolic subgroups such that $J^{w_0} = K$ the following holds:	
	\[Bh^{-1}gB \subseteq P_Jh^{-1}gP_K = (BW_JB)(Bw_0B)(BW_KB) \subseteq \bigcup_{u \in W_J, v \in W_K}Buw_0vB = \bigcup_{v \in W_K}Bw_0vB,\]
	where the last equality follows as $uw_0v = w_0u^{w_0}v \in w_0W_K$, recalling $J^{w_0} = K$. Since this expression holds for any maximal $K \subset S$, we obtain that $Bh^{-1}gB$ must be contained in the intersection of $\bigcup_{v \in W_K}Bw_0vB$ over all $K$. The intersection consists of all double cosets $Bw_0xB$ such that $x \in W_K$ for all maximal $K \subset S$. However, as $W_{K_1} \cap W_{K_2} = W_{K_1 \cap K_2}$ for all $K_1, K_2 \subseteq S$, it follows that $x=1$ and hence $Bh^{-1}gB \subseteq Bw_0B$, from which equality follows, as double cosets are either disjoint or equal.
\end{proof}

\subsection{Erd\H{o}s-Ko-Rado sets of flags}

The opposition relation, or rather its complement, will serve as the defining one for Erd\H{o}s-Ko-Rado problems in this context.

\begin{defin}
	Let $\Omega$ be a set of flags of cotype $J$ such that $J^{w_0} = J$. Then $C \subseteq \Omega$ is an \textbf{EKR-set of flags} if $R_{w_0} \cap (C \times C) = \emptyset$. In other words, $C$ is an EKR-set of flags if no two flags in $C$ are opposite to each other.
\end{defin}

We have the following important feature of EKR-sets of flags. Consider the projection map $\phi_J: G/B \to G/P_J$ defined by $\phi_J(gB) = gP_J$. Geometrically, this amounts to deleting elements in a maximal flag such that the remainder is a flag is of cotype $J$. In the other way, we can `blow up' a partial flag $gP_J$ of cotype $J$ to a set of maximal flags $\phi^{-1}_J(gP_J)$.

\begin{lemma}\label{lemma:blow-up}
	If $C$ is an EKR-set of flags of cotype $J$, then $\phi^{-1}_J(C)$ is an EKR-set of maximal flags.
\end{lemma}

\begin{proof}
	Suppose that $\phi^{-1}_J(C)$ contains two opposite flags $gB$ and $hB$. Then $Bh^{-1}gB = Bw_0B$, which implies by \Cref{lemma:oppositenesslocally} that $P_Jh^{-1}gP_J = P_Jw_0P_J$, which contradicts our assumption.
\end{proof}

Geometrically this is quite natural as well: partial flags that are not opposite cannot suddenly become opposite when extending them to maximal flags. As said before, we will indicate the geometrical meaning of oppositeness of flags and hence EKR-sets in each classical type in more detail in the following sections. This observation comes in quite handy when we discuss upper bounds for the size of EKR-sets of flags: suppose we have an upper bound in the case of maximal flags, then an upper bound for an EKR-set $C$ of flags of cotype $J$ is derived by blowing up $C$ to $\phi^{-1}_J(C)$ and applying the bound in case of maximal flags to the latter set. Since $|\phi_J^{-1}(C)| = |C||\phi_J^{-1}(B)|$, an upper bound for $|C|$ can then immediately be computed.

A classical tool to obtain upper bounds for EKR-problems is the Delsarte-Hoffman bound for cocliques in a regular graph, see for example \cite{GM16}.

\begin{prop}\label{prop:DelsarteHoffman}
	Let $C$ be a coclique in a $k$-regular graph. Then 
	\[|C| \leq \frac{v}{1-\frac{k}{\alpha}},\]
	where $v$ is the number of vertices and $\alpha$ the smallest eigenvalue of the adjacency matrix of the graph. Moreover, if equality is attained, then the characteristic vector of $C$ is contained in the sum of eigenspaces corresponding to the eigenvalues $k$ and $\alpha$.
\end{prop}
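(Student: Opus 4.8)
The plan is to use a weighted eigenvalue/averaging argument on the adjacency matrix $A$ of the $k$-regular graph. First I would set up the linear-algebraic framework: since the graph is $k$-regular, the all-ones vector $\mathbf{j}$ is an eigenvector of $A$ with eigenvalue $k$, and this is the largest eigenvalue (by Perron-Frobenius, or by comparing the average degree to any eigenvalue). Because $A$ is a real symmetric matrix, it is diagonalizable with an orthonormal eigenbasis, and every other eigenvalue satisfies $\alpha \leq \lambda \leq k$, where $\alpha < 0$ is the smallest eigenvalue (negativity of $\alpha$ is what makes the bound meaningful). Let $\chi$ be the characteristic vector of the coclique $C$, so that $\chi^\top \mathbf{j} = |C|$ and $\chi^\top \chi = |C|$.

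The key computation is to evaluate $\chi^\top A \chi$ in two ways. On one hand, because $C$ is a coclique, no edge has both endpoints in $C$, so $A$ restricted to the support of $\chi$ is the zero submatrix and hence $\chi^\top A \chi = 0$. On the other hand, I would decompose $\chi$ along the eigenbasis: writing $\chi = \frac{|C|}{v}\mathbf{j} + \chi'$, where $\chi'$ is orthogonal to $\mathbf{j}$ (this is just the projection onto the $k$-eigenspace plus the orthogonal complement), one gets
\[
\chi^\top A \chi = k \cdot \frac{|C|^2}{v} + (\chi')^\top A \chi'.
\]
Since $\chi'$ lies in the span of eigenvectors with eigenvalue at most $\max_{\lambda \neq k}\lambda \le k$ but bounded below by $\alpha$, I would use the Rayleigh-quotient estimate $(\chi')^\top A \chi' \geq \alpha\,(\chi')^\top \chi'$, together with $(\chi')^\top\chi' = \chi^\top\chi - \frac{|C|^2}{v} = |C| - \frac{|C|^2}{v}$. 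Substituting and using $\chi^\top A \chi = 0$ yields
\[
0 \geq k\frac{|C|^2}{v} + \alpha\left(|C| - \frac{|C|^2}{v}\right),
\]
and rearranging (dividing by $|C| > 0$ and solving for $|C|$, using $\alpha < 0$ so inequalities flip appropriately) gives exactly $|C| \leq v/(1 - k/\alpha)$.

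For the equality case, I would trace back through the single inequality used, namely the Rayleigh bound $(\chi')^\top A \chi' \geq \alpha (\chi')^\top\chi'$. Equality holds there if and only if $\chi'$ lies entirely in the $\alpha$-eigenspace of $A$. Since $\chi = \frac{|C|}{v}\mathbf{j} + \chi'$ and $\mathbf{j}$ spans (a subspace of) the $k$-eigenspace, this is precisely the assertion that $\chi$ lies in the direct sum of the eigenspaces for $k$ and $\alpha$.

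The main obstacle is essentially bookkeeping rather than conceptual: one must handle the signs carefully, since $\alpha$ is negative and dividing an inequality by $\alpha$ reverses its direction, and one must be careful that $1 - k/\alpha > 0$ so the final bound is a genuine upper bound. The only genuine input beyond routine algebra is the spectral theorem for the symmetric matrix $A$ and the Perron-Frobenius-type fact that $k$ is the largest eigenvalue with eigenvector $\mathbf{j}$; both are standard for adjacency matrices of regular graphs, so I would simply cite them.
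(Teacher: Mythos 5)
Your proof is correct: the paper does not prove this proposition at all but simply cites it as the classical Delsarte--Hoffman (ratio) bound from the book of Godsil and Meagher, and your projection-plus-Rayleigh-quotient argument is exactly the standard proof given there, including the correct characterization of equality via tightness of the bound $(\chi')^\top A \chi' \geq \alpha\,(\chi')^\top \chi'$ forcing $\chi'$ into the $\alpha$-eigenspace.
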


In our case, an EKR-set of maximal flags corresponds to a coclique in the graph with adjacency matrix $A_{w_0}$. This graph is surely regular, as there is a group acting transitively on it. All that remains to do to obtain upper bounds, is to find the eigenvalues of $A_{w_0}$. We recall that in order to do so, it suffices to find the irreducible representations of the Iwahori-Hecke algebra to which it belongs. These irreducible representations and their corresponding characters are well-studied and known results can be applied to obtain the eigenvalues. It turns out that the irreducible characters of the Iwahori-Hecke algebra and those of $W$ are intimately connected, which makes sense in light of \Cref{remark:ihalgebraasqdeformation}. In essence, this is the approach followed by Brouwer in \cite{Brouwer10}.

\subsection{Tying it all together: Brouwer's recipe for the eigenvalues of oppositeness}

With this approach, Brouwer could show that the eigenvalues of oppositeness are powers of $q$, when the group is defined over $\mathbb{F}_q$ \cite{Brouwer10}. From his and earlier work, it is in fact possible to extract an explicit algorithm to compute the eigenvalues of $A_{w_0}$, which we have recorded below. 


\begin{algo}\label{algorithmmaximal}
	Computation of eigenvalues of opposition for maximal flags in the geometry corresponding to the classical group $G$ with Coxeter system $(W,S)$ and structure constants $\{q_s \,\, | \,\, s \in S\}$. For every $\chi \in \mathrm{Irr}(W)$ we can compute the corresponding eigenvalue(s) $\lambda_\chi$ as follows.
	\begin{enumerate}
		\item Determine a set $R \subseteq S$ of representatives for the conjugacy classes of generators $\{s^W \,\, | \,\, s \in S\}$.
		\item Compute the values of $e_r=|r^W|(1+\chi(r)/\chi(1))$ for all $r \in R$.
		\item Determine $\chi(w_0)$ and compare to $\chi(1)$ to determine the sign
		\[\mathrm{sgn}(\chi) = \begin{cases}
		+ & \text{if } \chi(1) = \chi(w_0) \\
		- & \text{if }\chi(1) = -\chi(w_0) \\
		\pm & \text{if } |\chi(1)| \neq |\chi(w_0)|.
		\end{cases}\]
		\item Compute the eigenvalue(s) $\lambda_\chi = \mathrm{sgn}(\chi)\prod_r q_r^{e_r/2}$.
	\end{enumerate}
	
	Remark that both the set of structure constants and the set $R$ in step two will consist of at most two elements, as all finite Weyl groups have at most two conjugacy classes containing generators.
\end{algo}

Without going too much into detail, the essence of this algorithm, and Brouwer's paper by extension, is the following result due to Springer combined with the fact that the representation theory and irreducible characters of the association scheme $\mathcal{A}(G,G/B)$ are strongly related to that of the underlying Weyl group $W$.

\begin{theorem}\cite[Theorem 9.2.2]{GP00}
	Let $\chi$ be an irreducible character of $\mathcal{A}(G,G/B)$. The element $A_{w_0}^2$ is central in this algebra and acts on a simple module affording $\chi$ by the scalar $\prod_{r \in R} q_r^{e_r}$.
\end{theorem}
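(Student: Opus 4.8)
I would prove the two assertions in turn. For centrality I would first establish the commutation relation $A_{w_0}A_s=A_{s^{w_0}}A_{w_0}$ for every $s\in S$, where $s^{w_0}=w_0sw_0$ again lies in $S$ because $w_0$ acts on $S$ by conjugation. Since $w_0$ is the longest element, $\ell(w_0s)=\ell(w_0)-1$ and likewise $\ell(s^{w_0}w_0)=\ell(w_0s)=\ell(w_0)-1$. Applying the Iwahori--Matsumoto multiplication rule on the left to $A_{s^{w_0}}A_{w_0}$, and its right-handed analogue (valid since $A_w\mapsto A_{w^{-1}}$ is an anti-automorphism) to $A_{w_0}A_s$, gives
\[A_{w_0}A_s=q_sA_{w_0s}+(q_s-1)A_{w_0},\qquad A_{s^{w_0}}A_{w_0}=q_{s^{w_0}}A_{s^{w_0}w_0}+(q_{s^{w_0}}-1)A_{w_0}.\]
Now $s^{w_0}w_0=w_0s$ and $q_{s^{w_0}}=q_s$ because $s^{w_0}$ and $s$ are conjugate in $W$, so the two right-hand sides agree. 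Writing $A_w=A_{s_{i_1}}\cdots A_{s_{i_k}}$ for a reduced word $w=s_{i_1}\cdots s_{i_k}$ and moving $A_{w_0}$ past one generator at a time yields $A_{w_0}A_w=A_{w^{w_0}}A_{w_0}$ for all $w\in W$. Since conjugation by $w_0$ is an involution, applying this identity twice gives $A_{w_0}^2A_w=A_wA_{w_0}^2$; as the $A_w$ span the algebra, $A_{w_0}^2$ is central.

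Next I would pin down the eigenvalue. Over the generic algebra, with the $q_s$ treated as indeterminates, Tits' deformation theorem makes it split semisimple with simple modules $V_\chi$ indexed by $\mathrm{Irr}(W)$, so by Schur's lemma $A_{w_0}^2$ acts on $V_\chi$ by a scalar $\omega_\chi$. Each generator is invertible, with $A_s^{-1}=q_s^{-1}A_s-(1-q_s^{-1})$ coming from $A_s^2=q_sA_1+(q_s-1)A_s$; hence $A_{w_0}^2$ is invertible and $\omega_\chi$ is a unit of $\C[q_s^{\pm1}]$, i.e.\ a Laurent monomial $c\prod_sq_s^{n_s}$. Specialising every $q_s$ to $1$ collapses the algebra onto $\C W$ and sends $A_{w_0}^2$ to $w_0^2=1$, forcing $c=1$. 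Grouping exponents by conjugacy class (as $q_s$ depends only on the class of $s$) gives $\omega_\chi=\prod_{r\in R}q_r^{e_r}$ with integers $e_r$, and only their values remain to be found.

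To determine $e_r$ I would deform one class at a time: fix a class $r$, put $q_{r'}=1$ for $r'\neq r$ and $q_r=1+\varepsilon$, and expand to first order in $\varepsilon$. On the one hand $\omega_\chi=1+e_r\varepsilon+O(\varepsilon^2)$. On the other, the zeroth-order term of $A_{w_0}^2$ is $w_0^2=1$ in $\C W$, and the heart of the matter is that its first-order term equals $\big(\sum_{t\in r^W}(1+t)\big)\varepsilon$, the sum being over the reflections conjugate to $r$. Granting this, the reflection class sum $\sum_{t\in r^W}t$ acts on $V_\chi$, regarded as a $\C W$-module, by the familiar central-character scalar $|r^W|\chi(r)/\chi(1)$, so the first-order term acts by $|r^W|\big(1+\chi(r)/\chi(1)\big)\varepsilon$; comparing coefficients gives $e_r=|r^W|(1+\chi(r)/\chi(1))$.

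The main obstacle is exactly the computation of that first-order term. It requires expanding $A_{w_0}=A_{s_{i_1}}\cdots A_{s_{i_N}}$ along a reduced word, using $A_s=s+\tfrac{\varepsilon}{2}(1+s)+O(\varepsilon^2)$ when only the class of $s$ is deformed, and then recognising that the reflections $t_j=s_{i_1}\cdots s_{i_{j-1}}s_{i_j}s_{i_{j-1}}\cdots s_{i_1}$ attached to the reduced word run exactly once through all reflections, since $w_0$ sends every positive root to a negative one. This bookkeeping is the genuinely laborious part and is in essence Springer's computation. As a reassurance that the target formula is correct, the index representation ($\chi(r)/\chi(1)=1$, so $e_r=2|r^W|$ and $\prod_rq_r^{e_r}=q^{2\ell(w_0)}$ in the untwisted case) and the sign representation ($\chi(r)/\chi(1)=-1$, so $e_r=0$) reproduce the obvious eigenvalues of $A_{w_0}^2$ on these one-dimensional modules.
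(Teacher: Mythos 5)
The paper never proves this statement: it is imported wholesale as Springer's theorem via the citation \cite[Theorem 9.2.2]{GP00}, so there is no internal argument to compare yours against; the relevant benchmark is the Geck--Pfeiffer/Springer proof itself, and your sketch is essentially a faithful reconstruction of it. The skeleton is correct: centrality follows exactly as you say from the two one-sided multiplication rules together with $q_{s^{w_0}}=q_s$; invertibility of the generators plus the specialization $q_s\mapsto 1$ forces the scalar to be a monomial with constant $1$; and the exponents come out of the first-order deformation, the crucial input being that the reflections attached to a reduced word of $w_0$ enumerate all reflections exactly once. Your claimed first-order term $\sum_{t\in r^W}(1+t)$ is right (the verification also uses that conjugation by $w_0$ fixes each class sum), and your sanity checks on the index and sign representations are valid. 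Two steps are quicker than they deserve, though neither is a genuine gap: (i) upgrading ``$\omega_\chi$ is invertible'' to ``$\omega_\chi$ is a unit of the Laurent ring'' needs integrality of central character values over $\C[q_s^{\pm 1/2}]$ and the fact that this ring is integrally closed --- note also that splitting may force half-integer exponents, which your coefficient comparison absorbs anyway; (ii) the substitution $A_s= s+\tfrac{\varepsilon}{2}(1+s)+O(\varepsilon^2)$ defines a homomorphism to $\C W[\varepsilon]/(\varepsilon^2)$ only after the braid relations are checked for the truncated images (they hold, by the same inversion-set argument inside dihedral parabolics), and one must know the pullback of $V_\chi$ along this map computes the same scalar as the generic simple module, which follows since $\mathrm{Ext}^1_{\C W}(V_\chi,V_\chi)=0$ makes lifts modulo $\varepsilon^2$ unique up to isomorphism.
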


In the case one looks at partial flags instead of maximal flags, it is possible to derive the eigenvalues, up to sign, from the first algorithm. There is some caution to be taken however. If the longest word $w_0$ is central in $W$, then $|\chi(w_0)| = \chi(1)$ for all irreducible characters $\chi$ and so there will always be a single sign. This happens for the Weyl groups of type $B_n$ and $D_n$, $n$ even, but not for type $A_n, D_n$, $n$ odd. In these cases, there is an extra difficulty in computing the eigenvalues of opposition on partial flags as one needs to pay closer attention to the signs. We will therefore not discuss the latter cases in detail when considering partial flags.  

\begin{algo}\label{algorithmpartial}
	Computation of eigenvalues of opposition for partial flags of cotype $J$ in the geometry corresponding to the classical group $G$ with Coxeter system $(W,S)$ and structure constants $\{q_s \,\, | \,\, s \in S\}$.
	\begin{enumerate}
		\item Determine the decomposition of the induced character into irreducibles, i.e. $\mathrm{ind}^W_{W_J}(1_{W_J}) = \sum \chi$.
		\item Compute the eigenvalue $\lambda_\chi$ for every $\chi$ appearing in the above sum using \Cref{algorithmmaximal}.
		\item Compute the length $\ell$ of the longest word in $W_J$.		
		\item Compute the eigenvalues $\mu_\chi = \lambda_\chi \cdot q^{-\ell}$.
	\end{enumerate}
\end{algo}

In particular we see that if the irreducible characters $\chi$ and $\phi$ appear in the decomposition of $1_{W_J}^W$, the ratio of eigenvalues $\mu_{\chi}/\mu_{\phi} = \lambda_{\chi}/\lambda_{\phi}$ is independent of the flag type, which is very relevant considering the denominator in the Delsarte-Hoffman bound.

\begin{remark}
	It is remarkable that the decomposition of $\mathrm{ind}^W_{W_J}(1_{W_J})$, which is a character in a finite Weyl group, determines the structure of the permutation character obtained from the action of $G$ on $G/P_J$. This result is due to Curtis, Iwahori and Kilmoyer \cite{CIK71}, where the relation between the two is investigated on the level of the Iwahori-Hecke algebras. The importance for our applications is that for a given rank $n$, the building of type $X_n$ typically depends on a prime power $q$, while $W$ is a fixed and Weyl Coxeter group. This implies that the decomposition of $\mathrm{ind}^W_{W_J}(1_{W_J})$ can simply be found by a computer!
\end{remark}

\begin{remark}
	The subgroup $W_J$ is not necessarily an irreducible Weyl group, which means that we cannot use the values in \Cref{table:weylgroups} directly. In general, it will be the direct product of irreducible Weyl groups of type $A$, $B$ or $D$. Combining this with the fact that the length of the longest word in $W_1 \times W_2$ is the sum of the lengths of the longest words in the Weyl groups $W_1$ and $W_2$ allows one to compute the value $\ell$.
\end{remark}

\begin{remark}\label{rem:partialflagslosefactor}
	We can see why the eigenvalues lose a factor $q^\ell$, with $\ell$ the length of the longest word in $W_J$ using the projection map $\phi_J$ defined in \Cref{section:opposition}. Consider the partition of maximal flags into the fibers $\{\phi_J^{-1}(gP_J)\}$. If two partial flags $gP_J$ and $hP_J$ are non-opposite, then we know by \Cref{lemma:blow-up} that any two maximal flags in $\phi_J^{-1}(gP_J)$ and $\phi_J^{-1}(hP_J)$ are also non-opposite. On the other hand, if $gP_J$ and $hP_J$ are opposite, then there are $q^\ell$ maximal flags in $\phi^{-1}(hP_J)$ opposite to a given flag in $\phi^{-1}(gP_J)$ by \cite[Corollary 3.2]{Brouwer10}. In other words, the partition is equitable. If we now denote by $\widetilde{A}_{w_0}$ the adjacency matrix of oppositeness for partial flags, then the quotient matrix of this equitable partition is $q^\ell\widetilde{A}_{w_0}$ and the relation between the eigenvalues follows.
\end{remark}	

\section{Applying the algorithms}


We will now apply \Cref{algorithmmaximal} to find the largest and smallest eigenvalues of $A_{w_0}$, which we can feed into the Delsarte-Hoffman bound to obtain upper bounds for the size of an EKR-set of maximal flags. As mentioned before, we can restrict ourselves at first to maximal flags, as we can blow-up any EKR-set of partial flags by \Cref{lemma:blow-up}. As a next step, one could in theory try to obtain better upper bounds for partial flags by considering the oppositeness graph on partial flags and applying the Delsarte-Hoffman bound there. We will show no improvement is possible in the geometries we consider.




\subsection{Classical groups with Weyl group $A_n$, $n \geq 3$}


\subsubsection{The upper bound}

In \Cref{table:classicalgroups} we saw that $\PGL(n+1,q)$ has underlying Weyl group $A_n$ and corresponding geometry $\PG(n,q)$. Moreover, in \Cref{exa:GLexample1} we saw that $A_n \cong \Sym(n+1)$. This means that in order to apply \Cref{algorithmmaximal}, we need to recall some of the representation theory of $\Sym(n+1)$. This theory is well-known and can be found in several books. We will rely on $\cite{GP00}$ for the necessary results. In the end, the main result of this section will be the following upper bound for EKR-sets of maximal flags.

\begin{theorem}\label{thm:boundtypeAmaximal}
	If $C$ is an EKR-set of maximal flags in $\PG(n,q)$, then
	\[|C| \leq \frac{\qbinom{n+1}{1}\qbinom{n}{1}\dots \qbinom{2}{1}\qbinom{1}{1}}{1+q^{(n+1)/2}}.\]
\end{theorem}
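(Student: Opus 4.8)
The plan is to apply the Delsarte-Hoffman bound (\Cref{prop:DelsarteHoffman}) to the opposition graph on maximal flags in $\PG(n,q)$, for which I need three ingredients: the number of vertices $v$, the valency $k$, and the smallest eigenvalue $\alpha$ of $A_{w_0}$. The number of maximal flags is $v = |G/B| = |\PGL(n+1,q)|/|B|$, which counts complete flags in $\PG(n,q)$ and equals the product of Gaussian binomials $\qbinom{n+1}{1}\qbinom{n}{1}\cdots\qbinom{1}{1}$; this is a standard count and matches the numerator of the claimed bound. For the valency $k$, since the group is untwisted we have $q_s = q$ for all $s$, and the opposition relation $R_{w_0}$ has valency $|Bw_0B/B| = q^{\ell(w_0)} = q^{n(n+1)/2}$ by the multiplication rules, reading off $\ell(w_0)$ from \Cref{table:weylgroups}.

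The heart of the argument is computing the smallest eigenvalue $\alpha$ via \Cref{algorithmmaximal}. I would run the recipe over $\mathrm{Irr}(\Sym(n+1))$, indexed by partitions of $n+1$. Step 1 identifies the representatives $R$: in type $A_n$ all generators $s_i = (i,i+1)$ are conjugate, so $R$ is a single transposition $r$ with $|r^W| = \binom{n+1}{2}$. Step 2 then gives a single quantity $e_r = \binom{n+1}{2}\bigl(1 + \chi(r)/\chi(1)\bigr)$, so that the magnitude of each eigenvalue is $q^{e_r/2}$ and the eigenvalue itself is $\mathrm{sgn}(\chi)\,q^{e_r/2}$. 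Because every eigenvalue is $\pm q^{e_r/2}$ and the largest one must be the valency $k = q^{n(n+1)/2}$ attained by the trivial character, the smallest (most negative) eigenvalue is the most negative quantity of the form $-q^{e_r/2}$. I expect this to come from the character $\chi$ with $\mathrm{sgn}(\chi) = -$ maximizing $e_r$, i.e. maximizing the ratio $\chi(r)/\chi(1)$ among characters that are negative on $w_0$.

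The main obstacle, then, is the representation-theoretic bookkeeping: determining exactly which irreducible character produces the smallest eigenvalue and evaluating $e_r$ there. The candidate is the standard (natural $n$-dimensional) representation, whose partition is $(n,1)$; for it one computes $\chi(1) = n$ and $\chi(r) = n-2$ on a transposition, giving the ratio $\chi(r)/\chi(1) = (n-2)/n$ and hence $e_r = \binom{n+1}{2}\cdot\frac{2(n-1)}{n} = (n+1)(n-1) = n^2 - 1$, so $q^{e_r/2} = q^{(n^2-1)/2}$. I must then verify its sign is $-$ by comparing $\chi(w_0)$ with $\chi(1)$ using the order-2 diagram automorphism action of $w_0$, and — crucially — confirm that no other character yields a larger $e_r$ with negative sign, which requires knowing that the standard representation maximizes $\chi(r)/\chi(1)$ over the nontrivial irreducibles. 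Granting $\alpha = -q^{(n^2-1)/2}$, the ratio in the denominator is $k/\alpha = q^{n(n+1)/2}/(-q^{(n^2-1)/2}) = -q^{(n(n+1)-(n^2-1))/2} = -q^{(n+1)/2}$, so $1 - k/\alpha = 1 + q^{(n+1)/2}$, and substituting $v$, $k$, $\alpha$ into \Cref{prop:DelsarteHoffman} yields exactly the stated bound.
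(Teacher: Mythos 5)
Your overall strategy is exactly the paper's: apply the Delsarte--Hoffman bound to $A_{w_0}$, extract the extreme eigenvalues from \Cref{algorithmmaximal} via the character theory of $\Sym(n+1)$, and your computations for the two candidate characters $[n+1]$ (trivial) and $[n,1]$ (standard) are correct, as is the valency $k = q^{n(n+1)/2}$ and the final arithmetic. However, the step you defer with ``granting $\alpha = -q^{(n^2-1)/2}$'' is precisely the mathematical content of the paper's proof, and without it the argument is incomplete: you must show that \emph{every} nontrivial irreducible $\chi_\mu$ satisfies $\chi_\mu(r)/\chi_\mu(1) \leq \chi_{[n,1]}(r)/\chi_{[n,1]}(1)$, so that no other character can produce a negative eigenvalue of larger magnitude. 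The paper closes this gap by converting the character ratio into partition combinatorics via \Cref{prop:ainvariants}, namely $\tfrac{n(n+1)}{2}\,\chi_\mu(r)/\chi_\mu(1) = a^*(\mu)-a(\mu)$ with $a(\mu)=\sum_i (i-1)\mu_i$ and $a^*(\mu)=\sum_i \binom{\mu_i}{2}$, and then an elementary estimate (\Cref{lemma:partitionmaxvalue}): any partition of $n+1$ with all parts at most $n-1$ has $a^*(\mu) \leq \tfrac{(n+1)(n-2)}{2}$, which together with $a(\mu) \geq 0$ shows that $[n+1]$ and $[n,1]$ attain the two largest values of $a^*(\mu)-a(\mu)$. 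You would need to supply this (or an equivalent) maximality argument for your proof to stand.

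A second, smaller point: your plan to ``verify its sign is $-$'' for $\chi_{[n,1]}$ cannot succeed as stated, because the sign is in fact $\pm$, not $-$. Since $\chi_{[n,1]}(g) = |\mathrm{Fix}(g)|-1$ and $w_0 = (1,n+1)(2,n)(3,n-1)\cdots$ has $0$ or $1$ fixed points depending on the parity of $n$, one gets $\chi_{[n,1]}(w_0) \in \{-1,0\}$, whose absolute value differs from $\chi_{[n,1]}(1)=n$; by step 3 of \Cref{algorithmmaximal} this puts us in the $\pm$ case, meaning both $+q^{(n^2-1)/2}$ and $-q^{(n^2-1)/2}$ occur as eigenvalues. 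That suffices for the bound --- the negative eigenvalue exists --- but the verification goes through this fixed-point count, not through establishing a definite negative sign from the diagram automorphism.
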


Since the enumerator is the number of maximal flags in $\PG(n,q)$, we need to prove that the denominator in the Delsarte-Hoffman bound $1-k/\alpha$ equals $1+q^{(n+1)/2}$. We will run through each of the steps in \Cref{algorithmmaximal} in order to compute $k$ and $\alpha$. Before diving into it, we will need the following definition, which is fundamental for the representation theory of $\Sym(n)$.

\begin{defin}
	A \textbf{partition} $\mu$ is a non-increasing sequence of positive integers $\mu_1 \geq \dots \geq \mu_k$. It is a partition for $m \in \mathbb{N}$, denoted as $\mu \vdash m$, if $|\mu| := \mu_1 + \dots + \mu_k = m$. A partition $\mu$ is usually written with square brackets as $[\mu_1,\dots,\mu_k]$. The total number of partitions of $m$ is denoted as $p(m)$.
\end{defin}

With this definition in mind, we can run through the steps.

\begin{enumerate}
	\item \textbf{Determine a set $R$ of representatives for the conjugacy classes of generators $\{s^W \,\, | \,\, s \in S\}$.}
	
	Each conjugacy classes of $\Sym(n+1)$ is defined by its cycle type. For instance, each generator $s_i = (i, i+1)$, $1 \leq i \leq n$, has cycle type $(2,1^{n-1})$, which implies that all generators are conjugated. The number of elements of this cycle type is clearly $n(n+1)/2$ and any generator is a representative for the conjugacy class.
	
	\item \textbf{For $\chi \in \mathrm{Irr}(W)$, compute the value of $e_r=|r^W|(1+\chi(r)/\chi(1))$.}
	
	From the previous point, it is clear that the number of conjugacy classes is $p(n+1)$. From representation theory we know that this also equals the number of irreducible characters. A more surprising fact is that for $\Sym(n+1)$, there is a very nice bijection connecting a partition $\mu \vdash n+1$ to an irreducible character $\chi_\mu$. This correspondence allows one to evaluate character values on group elements by some elementary combinatorial rules as we will see later on.
	
	This bijection moreover means that we will not explicitly compute all eigenvalues of $A_{w_0}$, as the number of eigenvalues is $p(n+1) \sim e^{C\sqrt{n}}$. We will focus instead on the largest and smallest eigenvalue by figuring out which characters, or equivalently which partitions, give the largest values of $\chi(r)/\chi(1)$.
	
	\item \textbf{Determine $\chi(w_0)$ and compare to $\chi(1)$ to determine the sign.}
	
	The longest word in $\Sym(n+1)$ is $w_0 = (1, n+1)(2, n)(3, n-1)\dots$. For the characters obtained in the previous step, we will be able to compute $\chi(w_0)$ and deduce the signs.

	\item \textbf{Compute the eigenvalue(s) $\lambda_\chi = \mathrm{sgn}(\chi)q_r^{e_r/2}$.}
	
	The only structure constant is $q_r = |BrB/B| = q$ \cite[Section 8.4]{GP00}. For $n=2$ we saw a geometrical explanation in \Cref{exa:structureconstantA2}, which could be generalized to general $n$. In the end, we will find $k = q^{n(n+1)/2}$ and $\alpha = -q^{(n^2-1)/2}$, which proves \Cref{thm:boundtypeAmaximal}.
\end{enumerate}

The characters maximizing $\chi(r)/\chi(1)$ can be easily found with some more terminology. For a partition $\mu = [\mu_1,\dots,\mu_k]$,  define two invariants $a(\mu)$ and $a^*(\mu)$ as 
\begin{align*}
&a(\mu) = \sum_{i=1}^{k}(i-1)\mu_i, \\
&a^*(\mu) = \frac{1}{2}\sum_{i=1}^{k}\mu_i(\mu_i-1) = \sum_{i=1}^{k}{\mu_i \choose 2}.
\end{align*}
Then the following result will be the basis of our investigation.

\begin{prop}\cite[Proposition 5.4.11]{GP00}\label{prop:ainvariants}
	Let $n \geq 1$ and $\mu \vdash n+1$, then
	\[\frac{n(n+1)}{2}\frac{\chi_\mu(r)}{\chi_\mu(1)} = a^*(\mu)-a(\mu),\]
	where $r$ is a transposition.
\end{prop}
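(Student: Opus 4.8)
The plan is to recognize the right-hand side $a^*(\mu)-a(\mu)$ as the \emph{total content} of the Young diagram of $\mu$, and then to invoke the classical fact that the normalized value of an irreducible character on a transposition equals the total content divided by $\binom{n+1}{2}$. First I would set $m=n+1$ and, writing $c(i,j)=j-i$ for the content of the cell in row $i$ and column $j$, establish by a short telescoping computation that
\[
\sum_{(i,j)\in\mu} c(i,j) \;=\; \sum_{i}\Bigl(\binom{\mu_i}{2}-(i-1)\mu_i\Bigr)\;=\;a^*(\mu)-a(\mu).
\]
This reduces the proposition to the single identity $\binom{m}{2}\,\chi_\mu(r)/\chi_\mu(1)=\sum_{(i,j)\in\mu}c(i,j)$.

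To prove this identity I would use the class sum $C_2=\sum_{1\le i<j\le m}(i,j)$ of all transpositions. Since $C_2$ is a sum over a single conjugacy class, it is central in $\C\,\Sym(m)$, so by Schur's lemma it acts on the irreducible module $V_\mu$ affording $\chi_\mu$ as a scalar, and that scalar is $\chi_\mu(C_2)/\chi_\mu(1)=|C_2|\,\chi_\mu(r)/\chi_\mu(1)=\binom{m}{2}\chi_\mu(r)/\chi_\mu(1)$, because all transpositions are conjugate and there are $\binom{m}{2}$ of them. It therefore suffices to evaluate this same scalar by a second, independent route and show it equals the total content.

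For the second route I would bring in the Jucys--Murphy elements $X_k=\sum_{i<k}(i,k)$ for $k=2,\dots,m$, whose sum is precisely $C_2$. The key structural input is that on Young's seminormal basis of $V_\mu$, indexed by the standard Young tableaux of shape $\mu$, each $X_k$ acts diagonally, sending the basis vector attached to a tableau $t$ to $c(\text{box of }k\text{ in }t)$ times itself. Summing over $k$, the operator $C_2=\sum_k X_k$ then acts on every such basis vector by the total content $\sum_{(i,j)\in\mu}c(i,j)$, independently of the chosen tableau; this pins down the central scalar and, combined with the first route, completes the proof. The main obstacle is exactly this diagonal-action statement for the Jucys--Murphy elements: establishing it requires genuine input from the representation theory of the symmetric group (the Okounkov--Vershik analysis of the Gelfand--Tsetlin algebra, or Young's explicit construction of the seminormal representations), and it is the only nontrivial ingredient. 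An alternative that sidesteps the seminormal basis is to compute the scalar by induction on $m$ via the branching rule $V_\mu\!\downarrow_{\Sym(m-1)}=\bigoplus_\nu V_\nu$, writing $C_2^{(m)}=C_2^{(m-1)}+X_m$; but controlling $X_m$ in this recursion ultimately reproduces the same content computation, so I would present the Jucys--Murphy argument as the cleanest path and note the Murnaghan--Nakayama rule (removing a domino) as a more computational fallback.
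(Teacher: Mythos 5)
Your proposal is correct, but note that the paper does not prove this proposition at all: it is imported verbatim from Geck--Pfeiffer \cite[Proposition 5.4.11]{GP00}, so any argument you give is ``different'' from the paper's in the trivial sense that the paper has none. Your three steps are all sound: the telescoping identity $\sum_{(i,j)\in\mu}(j-i)=\sum_i\bigl(\binom{\mu_i}{2}-(i-1)\mu_i\bigr)=a^*(\mu)-a(\mu)$ is a correct row-by-row computation (equivalently, $a(\mu)=\sum_j\binom{\mu_j'}{2}$ counted column by column, recovering Frobenius's classical form of the transposition formula); the Schur's-lemma evaluation of the class sum $C_2$ as the scalar $\binom{m}{2}\chi_\mu(r)/\chi_\mu(1)$ is the standard central-character argument (and is exactly the mechanism the paper itself uses later, in type $B_n$, via the characters $\omega_{(\mu,\nu)}$); and the identification of $C_2$ with the sum of Jucys--Murphy elements acting by the total content is the standard modern proof. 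You are right to flag the diagonal action of the $X_k$ on the seminormal basis as the one ingredient that genuinely requires representation-theoretic input; since you would cite that fact rather than prove it, your argument is not more self-contained than the paper's bare citation --- it merely pushes the citation one level deeper, to a more classical and widely known statement. That trade-off is reasonable, and your fallback via Murnaghan--Nakayama (stripping a $2$-hook) would also work, at the cost of a fiddlier sign and hook-length bookkeeping.
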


Therefore, the objective, which was in terms of irreducible characters, is now of a purely combinatorial nature: to find the partitions $\mu \vdash n+1$ giving the largest values for $a^*(\mu)-a(\mu)$. 

\begin{lemma}\label{lemma:partitionmaxvalue}
	The two largest values for $a^*(\mu)-a(\mu)$ are attained by $\mu = [n+1]$ and $\mu = [n,1]$. The respective values are 
	\begin{align*}
	a^*([n+1])-a([n+1]) = \frac{n(n+1)}{2}, \\
	a^*([n,1])-a([n,1]) = \frac{n^2-n-2}{2}.
	\end{align*}
\end{lemma}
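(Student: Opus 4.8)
The plan is to reinterpret the quantity $a^*(\mu)-a(\mu)$ as the \emph{total content} of the Young diagram of $\mu$ and then to maximise content by a simple exchange argument. Writing $N=n+1$ and assigning to the cell in row $i$, column $j$ of the diagram of $\mu$ its content $j-i$, a row-by-row expansion of $\sum_{j=1}^{\mu_i}(j-i)$ yields the identity
\[a^*(\mu)-a(\mu)=\sum_{i}\Big[\binom{\mu_i}{2}-(i-1)\mu_i\Big]=\sum_{(i,j)}(j-i),\]
the last sum ranging over all cells $(i,j)$ of $\mu$. This turns the statement into the purely geometric problem of arranging $N$ cells into a partition shape so as to maximise the sum of contents, and the two winners should clearly be the ones pushing as many cells as possible into the top row.

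First I would settle the maximum. If $\mu\neq[N]$ it has at least two rows, so its bottom-right corner $(k,\mu_k)$ is removable; deleting it and appending a cell to the end of the first row again produces a partition of $N$, and changes the total content by $\mu_1-(\mu_k-k)=\mu_1-\mu_k+k\ge 2>0$, since $\mu_1\ge\mu_k$ and $k\ge 2$. Thus every $\mu\neq[N]$ has strictly smaller content than the shape reached after finitely many such moves, namely $[N]$; hence $[N]$ is the unique maximiser, with content $\binom{N}{2}=\tfrac{n(n+1)}{2}$, as required.

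To isolate the second largest value I would peel off the first row. If $\nu=[\mu_2,\mu_3,\dots]$ is the partition formed by the remaining rows, then shifting content labels down by one gives
\[a^*(\mu)-a(\mu)=\binom{\mu_1}{2}-(N-\mu_1)+\big(a^*(\nu)-a(\nu)\big)\le\binom{\mu_1}{2}+\binom{N-\mu_1}{2}-(N-\mu_1)=:h(\mu_1),\]
using the maximality of $[N-\mu_1]$ from the previous step, with equality only for the two-row shape $\mu=[\mu_1,N-\mu_1]$. The constant second difference $h(m+1)-2h(m)+h(m-1)=2$ shows $h$ is discretely convex, so on any range of $m$ its maximum sits at an endpoint. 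On $1\le m\le N$ the global maximum is $h(N)=\binom{N}{2}$ at $m=N$ only; next, $\mu_1=N-1$ forces $\mu=[N-1,1]$ and gives $h(N-1)=\binom{N-1}{2}-1=\tfrac{n^2-n-2}{2}$; and for $\mu_1\le N-2$ convexity together with the easily checked equality $h(1)=h(N-2)=\binom{N-2}{2}-1$ bounds $a^*(\mu)-a(\mu)\le\binom{N-2}{2}-1<\binom{N-1}{2}-1$.

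The routine parts are the content identity and the endpoint evaluations of $h$. The one point that demands care is the last step: the bound $a^*(\mu)-a(\mu)\le h(\mu_1)$ is tight only for two-row shapes, so I must separately argue that for every shorter first row the bound $h(\mu_1)$ is \emph{already} below $h(N-1)$ — which is exactly what discrete convexity plus $h(1)=h(N-2)$ delivers, thereby ruling out any competitor for second place and pinning the two largest values to $[N]$ and $[N-1,1]$.
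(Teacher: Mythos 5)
Your proof is correct, but it takes a genuinely different route from the paper's. You reinterpret $a^*(\mu)-a(\mu)$ as the total content $\sum_{(i,j)\in\mu}(j-i)$ of the Young diagram and then optimize geometrically: an exchange argument (move the last cell of the bottom row to the end of the top row, gaining $\mu_1-\mu_k+k\ge 2$) shows $[n+1]$ is the unique maximizer, and peeling off the first row reduces the hunt for second place to maximizing the discretely convex function $h(m)=\binom{m}{2}+\binom{N-m}{2}-(N-m)$, $N=n+1$, whose endpoint evaluations isolate $[n,1]$; all steps check out, including the equality $h(1)=h(N-2)=\binom{N-2}{2}-1$ that rules out every partition with $\mu_1\le N-2$. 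The paper instead disposes of the lemma in two lines by proving a simultaneous-optimization statement: $[n+1]$ and $[n,1]$ attain both the two largest values of $a^*$ and the two smallest values of $a$, the only computation being that any other partition has all parts at most $n-1$, whence $a^*(\mu)\le\frac{1}{2}\sum_i\mu_i(n-2)=\frac{(n+1)(n-2)}{2}$, which already falls below $a^*([n,1])=\frac{n(n-1)}{2}$, while $a(\mu)\ge 0$ trivially. The paper's argument is shorter and exploits the lucky fact that the two objectives happen to be optimized by the same two partitions; yours is more systematic and buys more: explicit uniqueness of both the maximizer and the runner-up, a recursion (the peeling step could rank third place and beyond, where the paper's crude bound would no longer separate candidates), and the content identity itself, which illuminates why this quantity arises at all --- combined with the paper's Proposition 5.4.11 it is exactly Frobenius' formula expressing the normalized character value on a transposition as the average content of the diagram.
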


\begin{proof}
	The two partitions actually have a stronger property: $[n+1]$ and $[n,1]$ attain the two largest values for $a^*(\mu)$ and the two smallest values for $a(\mu)$. The former can be seen as follows: for any partition $\mu = [\mu_1,\dots,\mu_k]$ with all parts at most $n-1$ we find
	\[a^*(\mu) = \frac{1}{2}\sum_{i=1}^{k}\mu_i(\mu_i-1) \leq \frac{1}{2}\sum_{i=1}^{k}\mu_i(n-2) = \frac{(n+1)(n-2)}{2},\]	
	which is smaller than $a^*([n+1]) = n(n+1)/2$ and $a^*([n,1]) = n(n-1)/2$.
	
	The definition of $a(\mu)$ immediately shows the latter part. 
\end{proof}

A more concrete understanding of the irreducible characters $\chi_{[n+1]}$ and $\chi_{[n,1]}$ is in order. Consider the natural action of $\Sym(n+1)$ on $\C^{n+1}$ by $g \cdot e_i = e_{g(i)}$, $1 \leq i \leq n+1$, for $g \in \Sym(n+1)$ and $e_i$ the $i$-th standard unit vector. This action defines a representation of $\Sym(n+1)$ in $\GL(n+1,\C)$ and hence a character $\chi$. This character is not irreducible as the line spanned by the vector $\textbf{j} = \sum_{i=1}^{n+1} e_i$ is an invariant subspace under this action. However, it is not hard to check that $\textbf{j}$ and $V = \textbf{j}^\perp$ are indecomposable with irreducible characters $\chi_\textbf{j}$ and $\chi_V$ respectively, so that $\chi = \chi_\textbf{j} + \chi_V$. Since $g \cdot \textbf{j} = \textbf{j}$ for all $g \in \Sym(n+1)$, it follows that $\chi_\textbf{j}$ is the trivial character. The representation corresponding to $V$ is called the standard representation. As $\chi(g) = |\text{Fix}(g)|$, we then find that $\chi_V(g) = |\text{Fix}(g)|-1$. It turns out, after a bit more work \cite[Proposition 5.4.12]{GP00}, that $\chi_\textbf{j} = \chi_{[n+1]}$ and $\chi_V = \chi_{[n,1]}$.

\begin{lemma}
	The eigenvalues corresponding to $\chi_{[n+1]}$ and $\chi_{[n,1]}$ are $\lambda_{[n+1]} = q^{n(n+1)/2}$ and $\lambda_{[n,1]} = \pm q^{(n^2-1)/2}$ respectively, which are the largest and smallest eigenvalue of $A_{w_0}$.
\end{lemma}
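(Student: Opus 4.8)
The plan is to run the two characters $\chi_{[n+1]}$ and $\chi_{[n,1]}$ through \Cref{algorithmmaximal} and then deduce extremality from \Cref{lemma:partitionmaxvalue}. Since all generators of $\Sym(n+1)$ are conjugate, there is a single representative $r$ (a transposition) with $|r^W| = n(n+1)/2$, and a single structure constant $q_r = q$. Combining the definition $e_r = |r^W|\bigl(1 + \chi_\mu(r)/\chi_\mu(1)\bigr)$ with \Cref{prop:ainvariants} gives the clean formula
\[e_r = \frac{n(n+1)}{2} + \bigl(a^*(\mu) - a(\mu)\bigr),\]
so that $\lambda_\mu = \mathrm{sgn}(\chi_\mu)\, q^{e_r/2}$. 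Plugging in the two explicit values from \Cref{lemma:partitionmaxvalue} then yields $e_r = n(n+1)$ for $\mu = [n+1]$ and $e_r = n^2 - 1$ for $\mu = [n,1]$, hence eigenvalue magnitudes $q^{n(n+1)/2}$ and $q^{(n^2-1)/2}$ respectively.

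Next I would determine the signs. As $\chi_{[n+1]}$ is the trivial character, $\chi_{[n+1]}(w_0) = \chi_{[n+1]}(1) = 1$, so the sign is $+$ and $\lambda_{[n+1]} = q^{n(n+1)/2}$. For the standard character I would use $\chi_{[n,1]}(g) = |\mathrm{Fix}(g)| - 1$ together with the description $w_0(i) = n+2-i$: this permutation fixes a point precisely when $n$ is even, so $\chi_{[n,1]}(w_0)$ equals $0$ for $n$ even and $-1$ for $n$ odd. In either case $|\chi_{[n,1]}(w_0)| \neq n = \chi_{[n,1]}(1)$ for $n \geq 3$, so the algorithm returns the ambiguous sign; equivalently, $A_{w_0}^2$ acts by the scalar $q^{n^2-1}$ on the corresponding simple module and both values $\pm q^{(n^2-1)/2}$ genuinely occur as eigenvalues. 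This gives $\lambda_{[n,1]} = \pm q^{(n^2-1)/2}$.

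Finally I would argue extremality. Because $\mu \mapsto q^{e_r/2}$ is strictly increasing in $a^*(\mu)-a(\mu)$, \Cref{lemma:partitionmaxvalue} shows that the two largest eigenvalue magnitudes are exactly $q^{n(n+1)/2}$ and $q^{(n^2-1)/2}$, with every other irreducible character contributing a strictly smaller magnitude. The valency $k = q^{n(n+1)/2}$, attached to the trivial character, is by Perron--Frobenius the largest eigenvalue. Since the next-largest magnitude $q^{(n^2-1)/2}$ is realised with a negative sign by $\chi_{[n,1]}$ and no other eigenvalue can exceed it in absolute value, the smallest eigenvalue is $\alpha = -q^{(n^2-1)/2}$.

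I expect the only real subtlety to be the sign bookkeeping for $\chi_{[n,1]}$: one must compute $\chi_{[n,1]}(w_0)$ correctly via the fixed-point count of $w_0$, and then correctly interpret the $\pm$ output of \Cref{algorithmmaximal} as the assertion that the negative value $-q^{(n^2-1)/2}$ genuinely occurs in the spectrum. This is precisely what licenses its use as the smallest eigenvalue $\alpha$ when feeding the Delsarte--Hoffman bound of \Cref{prop:DelsarteHoffman}, and hence what makes the denominator in \Cref{thm:boundtypeAmaximal} equal to $1 + q^{(n+1)/2}$.
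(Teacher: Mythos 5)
Your proposal is correct and follows essentially the same route as the paper's own proof: you run both characters through \Cref{algorithmmaximal}, combine $e_r = |r^W|\left(1+\chi(r)/\chi(1)\right)$ with \Cref{prop:ainvariants} and \Cref{lemma:partitionmaxvalue}, and settle the signs via the trivial character and the fixed-point count $\chi_{[n,1]}(w_0) = |\mathrm{Fix}(w_0)|-1 \in \{0,-1\}$. If anything, you make explicit two points the paper leaves implicit, namely the Perron--Frobenius identification of $k$ and the magnitude comparison showing no eigenvalue lies below $-q^{(n^2-1)/2}$, which is exactly what the Delsarte--Hoffman application requires.
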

\begin{proof}
	For $\chi_{[n+1]}$ we know that $e_r = |r^W| + a^*([n+1]) - a([n+1]) = n(n+1)$, by \Cref{prop:ainvariants} and $|r^W| = n(n+1)/2$. Since it is the trivial character, we also have $\chi_{[n+1]}(w_0) = 1$ so that in the end we find $\lambda_{[n+1]} = q^{e_r/2}$ as stated.
	For $\chi_{[n,1]}$ on the other hand, we have $e_r = n^2-1$. We observed that $\chi_{[n,1]}(w_0) = \text{Fix}(w_0) - 1$, which is $0$ or $-1$, depending on the parity of $n$, and hence never equal in absolute value to $\chi_{[n,1]}(1) = n$.
\end{proof}

This completes the proof of \Cref{thm:boundtypeAmaximal}. It is no coincidence that the exponent of the largest eigenvalue equals $\ell(w_0)$, see \cite[Proposition 3.1]{Brouwer10}.

We can now wonder if this upper bound can be improved for partial flags of cotype $J$, with $J^{w_0} = J$. Recall that an upper bound for these flags can be obtained from \Cref{thm:boundtypeAmaximal} by blow-up. However, it could happen that the ratio of the largest and smallest eigenvalue is bigger for the oppositeness graph on partial flags. We will show that this is not the case. 

\begin{theorem}\label{thm:boundtypeApartial}
	If $C$ is an EKR-set of flags of cotype $J = J^{w_0}$ in $\PG(n,q)$, then
	\[|C| \leq \frac{v}{1+q^{(n+1)/2}},\]
	where $v = [G:P_J]$ is the total number of flags of cotype $J$.
\end{theorem}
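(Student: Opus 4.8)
The plan is to apply the Delsarte-Hoffman bound (\Cref{prop:DelsarteHoffman}) directly to the oppositeness graph on flags of cotype $J$, whose vertex set has size $v = [G:P_J]$ and which is regular because $G$ acts transitively on it. Writing $k$ for its valency and $\alpha$ for its smallest eigenvalue, it suffices to prove that the denominator satisfies $1 - k/\alpha = 1 + q^{(n+1)/2}$. By \Cref{algorithmpartial}, the eigenvalues of this graph are exactly the numbers $\mu_\chi = \lambda_\chi\, q^{-\ell}$, where $\ell$ is the length of the longest word in $W_J$ and $\chi$ ranges over the irreducible characters occurring in $\mathrm{ind}^W_{W_J}(1_{W_J})$, with $\lambda_\chi$ the corresponding maximal-flag eigenvalue from the previous subsection. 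Since the common factor $q^{-\ell}$ cancels in any ratio $\mu_\chi/\mu_\phi = \lambda_\chi/\lambda_\phi$, the quantity $1-k/\alpha$ for the partial-flag graph coincides with the one from \Cref{thm:boundtypeAmaximal} \emph{provided} the two characters realizing the extreme eigenvalues there, namely the trivial character $\chi_{[n+1]}$ and the standard character $\chi_{[n,1]}$, both occur in $\mathrm{ind}^W_{W_J}(1_{W_J})$.

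The heart of the argument is therefore a multiplicity computation by Frobenius reciprocity. For the trivial character one has $\langle \mathrm{ind}^W_{W_J}(1_{W_J}), \chi_{[n+1]} \rangle = \langle 1_{W_J}, 1_{W_J}\rangle = 1$, so $\chi_{[n+1]}$ always appears. For the standard character I would use that $\chi_{[n,1]}$ is afforded by $V = \textbf{j}^\perp$ inside the permutation module $\C^{n+1}$ of $W = \Sym(n+1)$, so that
\[\langle \mathrm{ind}^W_{W_J}(1_{W_J}), \chi_{[n,1]} \rangle = \langle 1_{W_J}, \chi_{[n,1]}|_{W_J}\rangle = \dim V^{W_J}.\]
Because $W_J$ is a Young subgroup, the space of $W_J$-invariants in $\C^{n+1}$ is spanned by the indicator vectors of the $W_J$-orbits on $\{1,\dots,n+1\}$, whence $\dim (\C^{n+1})^{W_J}$ equals the number $t$ of these orbits and $\dim V^{W_J} = t-1$. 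As $J$ is a proper subset of $[n]$ (a flag of cotype $[n]$ being the empty flag, for which the statement is trivial), we have $W_J \neq W$ and hence $t \geq 2$, giving multiplicity $t-1 \geq 1$. Thus $\chi_{[n,1]}$ occurs as well.

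Granting this, the conclusion is immediate. By \Cref{lemma:partitionmaxvalue} and \Cref{prop:ainvariants}, $\chi_{[n+1]}$ and $\chi_{[n,1]}$ yield the two largest values of $|\lambda_\chi|$ among all irreducible characters of $W$: the trivial character gives the positive Perron value $\lambda_{[n+1]} = q^{n(n+1)/2}$, which scales to the valency $k = \mu_{[n+1]}$, while $\chi_{[n,1]}$ contributes the eigenvalue $-q^{(n^2-1)/2}$ and hence $\alpha = \mu_{[n,1]} = -q^{(n^2-1)/2-\ell}$, no other occurring character producing anything more negative. Therefore $1 - k/\alpha = 1 + q^{\,n(n+1)/2 - (n^2-1)/2} = 1 + q^{(n+1)/2}$, exactly as in the maximal-flag case, and the bound follows. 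I expect the only genuinely non-routine step to be the invariant-dimension computation establishing that $\chi_{[n,1]}$ survives in the induced character, together with the accompanying check on the sign of $\lambda_{[n,1]}$ ensuring that it really furnishes the minimal eigenvalue among the characters that appear.
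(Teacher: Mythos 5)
Your proposal is correct and follows essentially the same route as the paper's own proof: apply the Delsarte--Hoffman bound to the cotype-$J$ oppositeness graph, use \Cref{algorithmpartial} to reduce to the maximal-flag eigenvalues, and use Frobenius reciprocity to show that both $\chi_{[n+1]}$ and $\chi_{[n,1]}$ occur in $\mathrm{ind}^W_{W_J}(1_{W_J})$ for every proper $J$, so that the ratio $k/\alpha$ is unchanged. The only (cosmetic) difference is that you compute the multiplicity $\dim V^{W_J}$ by counting $W_J$-orbits on $\{1,\dots,n+1\}$, whereas the paper decomposes $V$ explicitly into $W_J$-invariant summands $\C^{a_1}\oplus\dots\oplus\C^{a_k}\oplus U$ with $U$ trivial of positive dimension---the same computation in different clothing.
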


\begin{proof}
We will show that the ratio of the largest and the smallest eigenvalue remains unchanged for partial flags of cotype $J$, by proving that $\chi_{[n+1]}$ and $\chi_{[n,1]}$ appear with non-zero multiplicity in $\mathrm{ind}_{W_J}^W(1_{W_J})$ for every $J \subsetneq S$. This implies by \Cref{algorithmpartial} that both eigenvalues survive upon restriction from maximal to partial flags. This is essentially contained in \cite[Remark 6.3.7]{GP00}, but we include a short proof nevertheless. It relies on the fact that the multiplicity of an irreducible character $\chi$ in the decomposition of a reducible character $\zeta$ equals the inner product 

\[\langle \zeta, \chi \rangle_W = \frac{1}{|W|}\sum_{w \in W}\zeta(g)\overline{\chi(g)}.\]

Moreover, in our case we can use Frobenius reciprocity which says that

\[\langle \mathrm{ind}_{W_J}^W(1_{W_J}), \chi \rangle_W = \langle 1_{W_J}, \chi_{|W_J} \rangle_{W_J},\]
where $\langle \cdot, \cdot \rangle_{W_J}$ denotes the inner product in $W_J$ and $\chi_{|W_J}$ the restriction of $\chi$ to $W_J$.

As $\chi_{[n+1]}$ is the trivial character, its restriction is the trivial character $1_{W_J}$. It follows immediately that its multiplicity is $\langle 1_{W_J},1_{W_J} \rangle_{W_J} = 1$. As for $\chi_{[n,1]}$, it is not hard to see that $W_J = \Sym(a_1+1) \times \dots \times \Sym(a_k+1)$ for some positive integers $a_1,\dots,a_k \geq 1$ with $a_1 + \dots + a_k < n$. Under the action of $W_J$, the module $V = \textbf{j}^\perp \cong \C^n$ corresponding to the standard representation decomposes as the direct sum $\C^{a_1} \oplus \dots \oplus \C^{a_k} \oplus U$, of $W_J$-invariant modules, where the action on $U$ is the trivial one. Since $\dim(U) = n - (a_1+\dots+a_k) >0$, it follows that the trivial character appears as a constituent with non-zero multiplicity in $\chi_{[n,1]|W_J}$. 
\end{proof}

\subsubsection{Reaching the upper bound}

Before we dive into examples that reach the upper bound obtained in the previous section, we will figure out what the geometrical interpretation is of the group-theoretical notion of `oppositeness'. By \Cref{lemma:oppositenesslocally}, it suffices to investigate what opposition means for a single subspace. Since the action $w_0$ switches the types, a $k$-dimensional space, $1 \leq k \leq n$, in $\PG(n,q)$ can only be opposite to a $(n-k+1)$-dimensional space (where dimension is the vector space dimension). The obvious opposition relation for a $k$-dimensional space $U$ and a $(n-k+1)$-dimensional space $V$ would be to impose $U \cap V = \emptyset$ or equivalently $\langle U, V \rangle = \PG(n,q)$. It turns out that this natural geometrical definition corresponds to the group-theoretical one.

\begin{lemma}\label{lem:oppositiongeometricallytypeA}
	A $k$-dimensional space $U$ and an $(n-k+1)$-dimensional space $V$ in $\PG(n,q)$ are opposite as in \Cref{def:oppositionofflags} if and only if $U \cap V = \emptyset$.
\end{lemma}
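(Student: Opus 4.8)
The plan is to identify both the group-theoretic notion of opposition and the geometric condition as single $G$-orbits on ordered pairs of subspaces, and then match them by computing one representative. First I record the dimension bookkeeping: since $\dim U + \dim V = k + (n-k+1) = n+1 = \dim V(n+1,q)$, the condition $U \cap V = \emptyset$ (no common projective point) is equivalent to $V(n+1,q) = U \oplus V$, i.e. to $U$ and $V$ being complementary. I would also note that $U$ has cotype $J = S \setminus \{s_k\}$ and $V$ has cotype $K = S \setminus \{s_{n-k+1}\}$, and that the action $s_i^{w_0} = s_{n+1-i}$ gives $J^{w_0} = K$, so opposition is genuinely defined for this pair of types as required by \Cref{def:oppositionofflags}.

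Next I would use that opposition of a type-$k$ subspace $U = gP_J$ and a type-$(n-k+1)$ subspace $V = hP_K$ amounts, by the reformulation following \Cref{def:oppositionofflags} together with \Cref{lemma:oppositenesslocally}, to the single equation $P_K h^{-1} g P_J = P_K w_0 P_J$. Since this is one double coset in $P_K \backslash G / P_J$, the opposite pairs $(U,V)$ form exactly one orbit of $G$ acting diagonally on (type-$k$ subspaces)$\,\times\,$(type-$(n-k+1)$ subspaces). On the other hand, elementary linear algebra shows that the $G$-orbits on such ordered pairs are classified precisely by the intersection dimension $\dim(U \cap V)$, which ranges over $0 \le \dim(U \cap V) \le \min(k, n-k+1)$. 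Consequently opposition must coincide with exactly one of these orbits, that is, with a single value $d_0$ of the intersection dimension; it then remains only to show $d_0 = 0$, which settles both directions of the lemma at once.

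To pin down $d_0$ I would exhibit one opposite pair and read off its intersection dimension from a frame. Take the basis $e_1, \dots, e_{n+1}$, the standard maximal flag $\mathcal{F}_0$ with $i$-th member $F_i = \langle e_1, \dots, e_i\rangle$ (stabilised by $B$, so $\mathcal{F}_0 \leftrightarrow 1\cdot B$), and its image $w_0\mathcal{F}_0 \leftrightarrow w_0 B$. These are opposite maximal flags, since $(1B, w_0B) \in R_{w_0}$ directly from $B\,1^{-1}w_0\,B = Bw_0B$. Using $w_0 \cdot e_j = e_{n+2-j}$, the type-$(n-k+1)$ member of $w_0\mathcal{F}_0$ is $w_0 F_{n-k+1} = \langle e_{k+1}, \dots, e_{n+1}\rangle$, while the type-$k$ member of $\mathcal{F}_0$ is $F_k = \langle e_1, \dots, e_k\rangle$; here the matching of maximal parabolics with subspace dimensions is the one recorded in the remark after \Cref{lemma:oppositenesslocally}. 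These two subspaces are visibly complementary, so they form an opposite pair $(U_0, V_0)$ with $\dim(U_0 \cap V_0) = 0$. Hence $d_0 = 0$, and the lemma follows.

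I expect the main obstacle to be the clean alignment in the second paragraph: that opposition, restricted to this fixed pair of types, is a single $G$-orbit, and that all $G$-orbits on such pairs are distinguished by $\dim(U\cap V)$. Once these two orbit descriptions are lined up, the explicit apartment computation of the third paragraph is routine, and a secondary but necessary care point is merely confirming that the type-$k$ member of $\mathcal{F}_0$ and the type-$(n-k+1)$ member of $w_0\mathcal{F}_0$ are the subspaces $U_0$ and $V_0$ claimed.
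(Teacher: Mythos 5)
Your proposal is correct and follows essentially the same route as the paper's proof: both identify opposition as a single orbit coming from the double coset $P_Kw_0P_J$, exhibit the representative pair $\langle e_1,\dots,e_k\rangle$, $\langle e_{k+1},\dots,e_{n+1}\rangle$ by letting $w_0$ act on the standard frame, and then invoke transitivity to conclude that opposite pairs are exactly the complementary ones. The only (cosmetic) difference is that you phrase the transitivity step globally, as the classification of $G$-orbits on ordered pairs by $\dim(U\cap V)$, whereas the paper fixes $U$ and uses that its stabilizer $P_J$ acts transitively on the complements of $U$.
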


\begin{proof}
	To see this, we will heavily rely on the correspondence between cosets in $G = \PGL(n+1,q)$ and flags or subspaces (which are flags of size one), as explained in \Cref{section:flagstocosets}. Recall the action of $\Sym(n+1)$ on a projective frame $\{e_1,\dots,e_{n+1}\}$ defined by $w \cdot e_i = e_{w(i)}$ as in \Cref{exa:GLexample2}. Without loss of generality, one can take the $k$-dimensional space $U = \langle e_1,\dots,e_k \rangle$ which hence corresponds to $P_J$, $J = S \setminus \{s_k\}$. Then the opposite parabolic subgroup $P_K$, $K = J^{w_0} = S \setminus \{s_{n-k+1}\}$, corresponds to the $(n-k+1)$-dimensional space $V = \langle e_1,\dots,e_{n-k+1} \rangle$. The coset $w_0P_K$ is clearly opposite to $P_J$ so that the corresponding subspaces must be too. The former is the $(n-k+1)$-dimensional space $w_0 \cdot V := \langle w_0 \cdot e_1, \dots, w_0 \cdot e_{n-k+1} \rangle$. As $w_0 = (1, n+1)(2, n)(3, n-1)\dots$, it follows that $w_0 \cdot V = \langle e_{n+1}, e_{n}, \dots, e_{k+1} \rangle$, which indeed shows $U \cap (w_0 \cdot V) = \emptyset$. Finally, every coset opposite to $P_J$ is of the form $gw_0P_K$, $g \in P_J$. It is known that the stabilizer of a $k$-dimensional space is transitive on the $(n-k+1)$-dimensional spaces not intersecting it, so that we find a bijection between all cosets $gw_0P_K$ opposite to $P_J$ and all $(n-k+1)$-dimensional spaces $V^g$ complementary to a fixed $k$-dimensional space $U$.
\end{proof}

It seems that in order to obtain equality in \Cref{thm:boundtypeApartial}, one needs to have rank $n = 2m-1$ and $m \in J$, $m \geq 2$. There are a few results in the literature that point in this direction.

\begin{itemize}
	\item When $n = 4$, it was shown in \cite{BB17} that a maximal set of EKR-flags of type $\{2,3\}$ has size of order $q^5$. Our bound gives an upper bound of order $q^{11/2}$. 
	\item When $n = 6$, it was shown in \cite{MW19} that a maximal set of EKR-flags of type $\{3,4\}$ has size of order $q^{11}$. Our bound again falls an order of $q^{1/2}$ short and gives an upper bound of order $q^{23/2}$.
	\item When $n \geq 2$, it was shown in \cite{BBG14} that a maximal set of EKR-flags of type $\{1,n\}$ has size of order $(n-1)q^{n-2}$. Our bound is quite a bit off and is of order $q^{(3n-3)/2}$.
	\item When $n = 2m-1$ and the type is $\{m\}$, we obtain the same sharp upper bound as in \cite{FW86}, which is the classical EKR-theorem for $m$-dimensional spaces in a $2m$-dimensional vector space.
\end{itemize}

In \cite{Newman04}, it was shown that in the last case, sharpness only arises whenever $C$ is the set of all $m$-spaces through a fixed point or dually, the set of all $m$-spaces contained in a fixed hyperplane. We can use this construction to obtain sharp constructions in more general types.

\begin{theorem}\label{thm:sharpexamplestypeA}
	Let $n = 2m-1$ and consider flags of type $J$, such that $m \in J$ and $J = J^{w_0}$. If $C$ is the set of all flags of type $J$ such that either
	\begin{enumerate}
		\item [(i)] there is a fixed point such that the $m$-dimensional space of every flag in $C$ contains that point, or
		\item [(ii)] there is a fixed hyperplane such that the $m$-dimensional space of every flag in $C$ is contained in that hyperplane,
	\end{enumerate}
	then $C$ is an EKR-set of flags of type $J$ meeting the upper bound in \Cref{thm:boundtypeApartial}.
\end{theorem}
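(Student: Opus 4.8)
The plan is to verify two things: first, that the constructions (i) and (ii) are genuinely EKR-sets of flags of type $J$, and second, that their size meets the bound from \Cref{thm:boundtypeApartial}. For the EKR property, I would invoke \Cref{lemma:oppositenesslocally}: two flags of type $J$ are opposite only if they can be extended to opposite maximal flags, and by the locality of opposition it suffices to examine a single pair of subspaces of opposite type. Since $m \in J$ and $m^{w_0} = n - m + 1 = m$ (because $n = 2m-1$), the $m$-dimensional space is fixed in type by $w_0$, so by \Cref{lem:oppositiongeometricallytypeA} two such $m$-spaces are opposite precisely when they are complementary, i.e. intersect trivially. In construction (i), every $m$-space passes through a fixed point, so any two of them share that point and therefore have nontrivial intersection; hence no two flags in $C$ can be opposite. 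Construction (ii) is the dual statement: two $m$-spaces inside a common hyperplane of $\PG(n,q)$ span at most that hyperplane, so they cannot be complementary. This establishes that $C$ is an EKR-set in both cases.

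For the cardinality, I would count the flags of type $J$ subject to the containment constraint. Fix the distinguished point $p$ (resp. hyperplane $H$). The flags in $C$ are exactly those flags of type $J$ whose $m$-space contains $p$; equivalently, I would pass to the residue (quotient geometry) at $p$, which is a $\PG(n-1,q)$, and count flags there. The key numerical input is the classical EKR-theorem of \cite{FW86,Newman04} for $m$-spaces in a $2m$-dimensional vector space: the number of $m$-spaces through a fixed point equals $\qbinom{n}{m-1} = \qbinom{2m-1}{m-1}$, and this is exactly a $\frac{1}{1+q^m} = \frac{1}{1+q^{(n+1)/2}}$ fraction of all $m$-spaces. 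The remaining elements of the flag (the subspaces whose types lie in $[n]\setminus J$, together with the rest of $J$) are then free to range over the appropriate residues, contributing the same combinatorial factor whether or not the $m$-space is constrained. I would therefore argue that imposing the fixed-point condition multiplies the total flag count $v = [G:P_J]$ by precisely $\frac{1}{1+q^{(n+1)/2}}$, matching the bound.

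The technical heart is the bookkeeping in this last paragraph: one must argue that constraining the $m$-dimensional member of the flag to pass through $p$ scales the number of completions to a full flag of type $J$ by exactly the ratio $\qbinom{2m-1}{m-1}/\qbinom{2m}{m}$, uniformly over all the other type constraints in $J$. The clean way to see this is via the transitive action of the stabilizer of $p$ (equivalently, of $H$) and a counting argument: the group $G$ acts transitively on flags of type $J$, while the point-stabilizer acts transitively on those $m$-spaces through $p$ and on the nested subspaces above and below, so $|C|/v$ equals the fraction of $m$-spaces through $p$ among all $m$-spaces. I expect the main obstacle to be making the residue/quotient decomposition fully rigorous for an arbitrary self-paired type $J$ containing $m$, rather than for the single type $\{m\}$; once the factorization of the flag count into a product over the ``rungs'' of $J$ is established, the $m$-rung alone carries the constraint and the computation collapses to the known projective EKR count. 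The duality between (i) and (ii) follows from the standard correlation of $\PG(n,q)$ exchanging points and hyperplanes and reversing the flag type $J$, which is preserved since $J = J^{w_0}$.
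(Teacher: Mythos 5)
Your proposal is correct and takes essentially the same route as the paper's proof: both treat $C$ as a blow-up of the known sharp examples of type $\{m\}$ (all $m$-spaces through a fixed point, resp.\ inside a fixed hyperplane), get the EKR property from the fact that opposition of flags restricts to opposition, i.e.\ disjointness, of their $m$-dimensional members, and get equality because the number of flags of type $J$ containing a given $m$-space is constant, so $|C|/v$ equals the type-$\{m\}$ ratio $1/(1+q^{(n+1)/2})$. The only difference is presentational: the paper compresses your stabilizer/transitivity bookkeeping into the single observation that blowing up multiplies the EKR-set and the total flag count by the same factor, so equality is preserved.
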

\begin{proof}
	This can be seen as $C$ is a blow-up of the maximal examples mentioned before the lemma. Similarly as in \Cref{lemma:blow-up}, one can see that $C$ is then again an EKR-set of flags. Moreover, its size meets the upper bound in \Cref{thm:boundtypeApartial} since the original set meets the upper bound and the process of blow-up multiplies both the total number of flags and the number of flags in the EKR-set by the same factor, so that equality is preserved.
\end{proof}

\begin{prob}
	Is \Cref{thm:boundtypeApartial} only sharp for flags of type $J$ whenever $n = 2m-1$ and $m \in J$? 
\end{prob}

\subsection{Classical groups with Weyl group $B_n$, $n \geq 3$}

Next we consider the classical groups with underlying Weyl groups of type $B_n$. These groups have polar spaces as corresponding geometries, which depend on a parameter $e$ as follows. 

\begin{prop}\label{prop:numberofspacestypeB}
	The number of subspaces of type $k$ is
	\[\qbinom{n}{k}\prod_{i=1}^{k}(q^{n+e-i}+1),\]
	where $e = \{0,\frac{1}{2},1,\frac{3}{2},2\}$ is defined as:
	\begin{itemize}
		\item $e = 0$ for $G = \PGO^+(2n,q)$,
		\item $e = \frac{1}{2}$ for $G = \PGU(2n,q)$,
		\item $e = 1$ for $G = \PSp(2n,q)$ or $\PGO(2n+1,q)$,
		\item $e = \frac{3}{2}$ for $G = \PGU(2n+1,q)$,
		\item $e = 2$ for $G = \PGO^-(2n+2,q)$.
	\end{itemize}
\end{prop}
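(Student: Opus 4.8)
The plan is to count subspaces of type $k$ in each polar space by the standard recursive, residue-based approach, and then to recognize that the five geometries differ only through a single parameter that measures the number of maximal totally isotropic/singular subspaces through a point of the polar space. First I would recall that a polar space of rank $n$ is determined by its parameters, and that the key quantity is the number of maximal subspaces through a point, which in the classical cases equals $q^{e} + 1$ scaled appropriately; more precisely, the number of singular points collinear with (but distinct from) a given point, or equivalently the number of generators on a given next-to-maximal subspace, is governed by the value of $e$. The standard fact I would invoke is that in a finite classical polar space of rank $n$ with parameter $e$, the number of totally isotropic points is $(q^{n}-1)(q^{n+e-1}+1)/(q-1)$, and more generally the number of subspaces of type $k$ admits a clean product formula.

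Next I would carry out the count by fixing a totally isotropic subspace of type $k-1$ and counting the type-$k$ subspaces containing it, using the quotient polar space (the residue), which is again a classical polar space but of rank $n-k+1$ and with the same parameter $e$. This gives a recursion whose solution telescopes. Concretely, I would write the number $N_k$ of type-$k$ subspaces as a ratio counting ordered flags: the number of type-$k$ subspaces times the number of complete flags inside a fixed one equals the number of ordered chains, and the chain count factors because at each step the number of points extending a type-$i$ subspace to a type-$(i+1)$ subspace inside the polar space is $(q^{n-i}-1)(q^{n+e-i-1}+1)/(q-1)$. Dividing out the internal flag count $\qbinom{k}{k}\cdots$ of a type-$k$ space, the Gaussian binomial $\qbinom{n}{k}$ emerges from the ``$(q^{n-i}-1)/(q-1)$'' factors while the product $\prod_{i=1}^{k}(q^{n+e-i}+1)$ emerges from the ``$+1$'' factors, which is exactly the claimed formula.

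To pin down the individual values of $e$, I would treat each classical group in turn and identify $e$ from the number of generators through a point of corank one, i.e. from the quadratic-form or Hermitian-form data: $e=0$ for the hyperbolic quadric $\PGO^{+}(2n,q)$ (two generators on a next-to-maximal subspace, matching $q^{0}+1=2$), $e=1$ for the symplectic and parabolic cases where this count is $q+1$, $e=2$ for the elliptic quadric $\PGO^{-}(2n+2,q)$, and the half-integer values $e=\tfrac12,\tfrac32$ for the two Hermitian cases $\PGU(2n,q)$ and $\PGU(2n+1,q)$, where the ambient field is $\mathbb{F}_{q^{2}}$ and the relevant point count involves $q^{1/2}$-type behaviour because one works with $\sqrt{q}$ as the defining field order. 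In each case the verification is a direct form-counting computation in a rank-one or rank-two residue. The main obstacle I anticipate is not the recursion itself, which is routine, but the bookkeeping that makes the Hermitian cases fit the uniform formula: one must be careful that the ``$q$'' appearing in the statement is the one fixed in Table~\ref{table:classicalgroups} (with the unitary groups defined over $\mathbb{F}_{q}$, $q$ a square), so that the half-integer exponents $e=\tfrac12,\tfrac32$ produce genuine integers $q^{n+e-i}$ and the formula counts an integer number of subspaces. I would therefore devote the most care to reconciling the parameter conventions and confirming that $q^{n+e-i}+1$ is an integer in every case, after which the theorem follows by assembling the per-group values of $e$ into the single statement.
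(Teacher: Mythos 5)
The paper does not actually prove this proposition: it is quoted as a classical, well-known counting result for finite polar spaces, and the only gloss the paper adds is the equivalent characterization (stated immediately after the proposition) that a subspace of type $n-1$ lies on exactly $q^e+1$ generators. Your proposal supplies the standard proof of this standard fact, and it is essentially correct: the double count of chains $U_1\subset\cdots\subset U_k$ of totally isotropic subspaces, using that the residue $U^\perp/U$ of a type-$i$ subspace $U$ is a polar space of rank $n-i$ with the same parameter $e$, gives
\[
N_k\cdot\qbinom{k}{1}\qbinom{k-1}{1}\cdots\qbinom{1}{1}
=\prod_{i=0}^{k-1}\frac{(q^{n-i}-1)(q^{n+e-i-1}+1)}{q-1},
\]
which telescopes exactly to $\qbinom{n}{k}\prod_{i=1}^{k}(q^{n+e-i}+1)$. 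Two small imprecisions are worth fixing. First, the quantity $(q^{n-i}-1)(q^{n+e-i-1}+1)/(q-1)$ is the number of type-$(i+1)$ totally isotropic subspaces containing a fixed type-$i$ subspace (i.e.\ the number of points of the residual polar space), not the number of points of the ambient polar space extending it --- the latter is larger by a factor $q^i$; your formula is the right one for the chain count, so only the wording needs adjustment, and likewise the ``internal flag count'' should read $\qbinom{k}{1}\qbinom{k-1}{1}\cdots\qbinom{1}{1}$. Second, in the Hermitian cases your phrase ``the ambient field is $\mathbb{F}_{q^2}$'' conflicts with the convention you (correctly) invoke a sentence later and which the paper uses: the unitary groups are defined over $\mathbb{F}_q$ with $q$ a square, so the generator count on a next-to-maximal subspace is $q^{1/2}+1$, resp.\ $q^{3/2}+1$, and all quantities $q^{n+e-i}$ are integers. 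With those repairs your argument is a complete and correct proof of a statement the paper leaves to the literature.
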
	

Equivalently, a subspace of type $n-1$ is incident with $q^e+1$ subspace of type $n$, also called generators. The parameter $e$ will be relevant for the Iwahori-Hecke algebras and their structure constants later on.

We now briefly describe the Weyl group $W$ of type $B_n$ and some properties which we need later on. Most of this material can be found in the first chapter of \cite{GP00}, albeit with small notational differences.

The Weyl group $W$ of type $B_n$ is called the hyperoctahedral group and has size $2^nn!$. It can be constructed as a permutation group on the set $\{-n,,\dots,-1,1,\dots,n\}$ with generators $s_i = (i,i+1)(-i,-i-1))$, $1 \leq i \leq n-1$ and $t = (-n, n)$. On the Dynkin diagram in \Cref{table:weylgroups}, the generators $s_i$ correspond to the first $n-1$ nodes, while $t$ is the last node. Clearly $\Sym(n)$ is a subgroup generated by $\{s_1,\dots,s_{n-1}\}$, which can also be seen from the Dynkin diagram. In particular, all $s_i$ are conjugate to each other and contained in a class of size $n(n-1)$, while the other generator $t$ is in a separate conjugacy class of size $n$.
The longest word in $W$ is $w_0 = (-n, n)\dots(-1,1)$ and is contained in the center of $W$ as it commutes with all generators. As we will see, this makes the computations in \Cref{algorithmmaximal} and \Cref{algorithmpartial} considerably easier than for type $A_n$.

\subsubsection{The upper bound}

Similarly as in type $A_n$, we will go through the steps of \Cref{algorithmmaximal} and prove the following result.

\begin{theorem}\label{thm:boundtypeBmaximal}
	Let $C$ be an EKR-set of maximal flags in a polar space of rank $n$ with parameter $e$. If $e \geq 1$ or $n$ even, then
	\[|C| \leq \frac{\prod_{i=1}^{n}(q^{n+e-i}+1)\qbinom{n}{1}\dots \qbinom{1}{1}}{1+q^{n+e-1}} = \prod_{i=2}^{n}(q^{n+e-i}+1)\qbinom{n}{1}\dots \qbinom{1}{1}.\]
	
	When $e = \frac{1}{2}$ and $n$ is odd, the upper bound is
	\[|C| \leq \frac{\prod_{i=1}^{n}(q^{n+1/2-i}+1)\qbinom{n}{1}\dots \qbinom{1}{1}}{1+q^{n/2}} = \prod_{\substack{ i=1 \\ i \neq (n+1)/2}}^{n}(q^{n+1/2-i}+1)\qbinom{n}{1}\dots \qbinom{1}{1}.\]
	
	When $e = 0$ and $n$ is odd, the upper bound is
	\[|C| \leq \frac{\prod_{i=1}^{n}(q^{n-i}+1)\qbinom{n}{1}\dots \qbinom{1}{1}}{2} = \prod_{i=1}^{n-1}(q^{n-i}+1)\qbinom{n}{1}\dots \qbinom{1}{1}.\]
\end{theorem}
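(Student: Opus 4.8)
The plan is to run \Cref{algorithmmaximal} for the Weyl group $W$ of type $B_n$ exactly as was done for type $A_n$, the decisive simplification being that the longest word $w_0$ is central, so by Schur's lemma $w_0$ acts as $\pm 1$ on every irreducible module and each $\chi \in \mathrm{Irr}(W)$ has an unambiguous sign $\mathrm{sgn}(\chi) \in \{+,-\}$ with $\chi(w_0) = \mathrm{sgn}(\chi)\chi(1)$. First I would record the two ingredients \Cref{algorithmmaximal} needs: the two classes of generators have sizes $|s^W| = n(n-1)$ and $|t^W| = n$, and the structure constants are $q_{s_i} = q$ and $q_t = q^e$ (the latter because a subspace of type $n-1$ lies on $q^e+1$ generators by \Cref{prop:numberofspacestypeB}, so the relation $R_t$ has valency $q^e$). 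Writing the magnitude of the eigenvalue attached to $\chi$ as $q^{f(\chi)}$ with
\[ f(\chi) = \frac{n(n-1)}{2}\Big(1 + \frac{\chi(s)}{\chi(1)}\Big) + \frac{en}{2}\Big(1 + \frac{\chi(t)}{\chi(1)}\Big), \]
the trivial character gives the valency $k = q^{n(n+e-1)}$, and the whole problem reduces to maximizing $f(\chi)$ over the non-trivial characters with $\mathrm{sgn}(\chi) = -$.

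To carry out this maximization I would parametrize $\mathrm{Irr}(W)$ by bipartitions $(\lambda;\mu) \vdash n$ and invoke the type-$B_n$ analogue of \Cref{prop:ainvariants} from \cite{GP00} to express both ratios $\chi(s)/\chi(1)$ and $\chi(t)/\chi(1)$ combinatorially in terms of $(\lambda;\mu)$. Since $|\chi(r)/\chi(1)| \le 1$, the function $f$ is a weighted sum of two quantities each at most $1$, and the trivial character is its unique maximizer; the smallest eigenvalue comes from the runner-up subject to the sign constraint. Two candidates compete: the reflection representation $\rho$ of dimension $n$, for which $\chi(s)/\chi(1) = \chi(t)/\chi(1) = (n-2)/n$ and, since $w_0$ acts as $-I$, $\mathrm{sgn}(\rho) = -$, giving $f(\rho) = (n-1)(n+e-1)$; and the linear character $\sigma$ with $\sigma(s_i) = 1$, $\sigma(t) = -1$, the unique non-trivial character trivial on the subgroup $\Sym(n) = \langle s_1,\dots,s_{n-1}\rangle$, for which $\chi(s)/\chi(1) = 1$ and $\chi(t)/\chi(1) = -1$, giving $f(\sigma) = n(n-1)$ and $\mathrm{sgn}(\sigma) = (-1)^n$.

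Comparing the exponents, $f(\sigma) - f(\rho) = (n-1)(1-e)$, so $\sigma$ wins precisely when $e < 1$ while $\rho$ wins when $e > 1$ (they tie at $e = 1$), and $\sigma$ contributes a \emph{negative} eigenvalue only when $n$ is odd. This produces exactly the three regimes of the statement: when $e \ge 1$ or $n$ is even, the smallest eigenvalue is $\alpha = -q^{(n-1)(n+e-1)}$ from $\rho$; when $e = \tfrac12$ and $n$ is odd it is $\alpha = -q^{n(n-1)}$ from $\sigma$; and when $e = 0$ and $n$ is odd, $q_t = 1$ so $f$ depends only on the $s$-term and $\sigma$ gives $\alpha = -q^{n(n-1)} = -k$. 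In each case I would feed $k$ and $\alpha$ into the Delsarte--Hoffman bound (\Cref{prop:DelsarteHoffman}): a short computation gives $1 - k/\alpha = 1 + q^{n+e-1}$, $1+q^{n/2}$, and $2$ respectively, and in each case the denominator cancels exactly one factor of the product $\prod_{i=1}^{n}(q^{n+e-i}+1)$ counting maximal flags, yielding the displayed closed forms.

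The main obstacle is the two-parameter, sign-constrained optimization underlying the choice of $\rho$ and $\sigma$: unlike the single invariant $a^*(\mu)-a(\mu)$ of \Cref{lemma:partitionmaxvalue}, here $f$ couples the two independent ratios $\chi(s)/\chi(1)$ and $\chi(t)/\chi(1)$ with a relative weight $e$ that changes which character is optimal, and one must simultaneously control the sign $\chi(w_0)/\chi(1)$ across all bipartitions. The delicate point is to rule out every other bipartition, in particular to show that no character with $\mathrm{sgn}(\chi)=-$ and $\chi(s)/\chi(1)$ strictly between $(n-2)/n$ and $1$ can overtake $\rho$ (for $n$ even and small $e$) or $\sigma$ (for $n$ odd). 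This is precisely where the explicit combinatorial formulas for the $B_n$ character ratios, together with the observation that $\sigma$ is the unique non-trivial character trivial on $\Sym(n)$, do the work.
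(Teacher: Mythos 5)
Your proposal follows the same route as the paper: run \Cref{algorithmmaximal} for the Weyl group of type $B_n$ with class sizes $|s^W|=n(n-1)$, $|t^W|=n$ and structure constants $q_s=q$, $q_t=q^e$, use the centrality of $w_0$ to get unambiguous signs, single out the trivial character, the reflection character $\chi_{([n-1],[1])}$ (your $\rho$) and the linear character $\chi_{(\emptyset,[n])}$ (your $\sigma$), and feed the resulting $k$ and $\alpha$ into \Cref{prop:DelsarteHoffman}. All of your numerical data are correct: $k=q^{n(n+e-1)}$, $f(\rho)=(n-1)(n+e-1)$ with sign $-$, $f(\sigma)=n(n-1)$ with sign $(-1)^n$, the comparison $f(\sigma)-f(\rho)=(n-1)(1-e)$, and the three denominators $1+q^{n+e-1}$, $1+q^{n/2}$ and $2$.

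However, there is a genuine gap, and it is exactly the step you flag as ``the main obstacle'' and then leave to the assertion that the combinatorial formulas ``do the work'': you never actually rule out the remaining irreducible characters, so your argument only compares $\rho$ and $\sigma$ against each other and the trivial character, not against all of $\mathrm{Irr}(W)$. This is the content of the paper's \Cref{lem:eigenvaluestypeB}, and the paper closes it as follows. By the central character formulas \cite[Propositions 6.2.6 and 6.2.8]{GP00}, the exponent attached to $\chi_{(\mu,\nu)}$ equals $\frac{n(n-1)}{2}+(a^*(\mu)+a^*(\nu)-a(\mu)-a(\nu))+e|\mu|$, and by \cite[Proposition 4.8]{Ram97} the sign is $(-1)^{|\nu|}$; in particular the sign is constant on each stratum $\{(\mu,\nu) \,\, | \,\, |\nu|=k\}$. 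Within such a stratum, \Cref{lemma:partitionmaxvalue} applied to $\mu$ and to $\nu$ separately shows the exponent is maximized at $([n-k],[k])$. This collapses your two-parameter, sign-constrained optimization to a one-variable problem: the exponent of $([n-k],[k])$ is a convex quadratic in $k$, hence maximal at the endpoints of the admissible range, which for negative sign (odd $k$) are $k=1$ (your $\rho$) and $k=n$ when $n$ is odd (your $\sigma$), respectively $k=n-1$ when $n$ is even (easily discarded against $k=1$); the only remaining candidate, $([n-1,1],\emptyset)$ from the stratum $k=0$, has positive sign and is irrelevant for the smallest eigenvalue. Some such reduction is indispensable: without it, the claim that no other bipartition yields a more negative eigenvalue remains unproven, and the Delsarte--Hoffman bound is not established.
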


Again, the enumerator is the number of maximal flags in a polar space of rank $n$ and parameter $e$, so that the computation of the smallest and largest eigenvalue will suffice to complete the proof.

\begin{enumerate}
	\item \textbf{Determine a set $R$ of representatives for the conjugacy classes of generators $\{s^W \,\, | \,\, s \in S\}$.}
	
	As mentioned in the introduction of this section, we can take $s: = s_1$ and $t$ to be two representatives of the conjugacy classes, with sizes $|s^W|=n(n-1)$ and $|t^W| = n$ respectively.
	
	\item \textbf{For $\chi \in \mathrm{Irr}(W)$, compute the value of $e_r=|r^W|(1+\chi(r)/\chi(1))$ for all $r \in R = \{s,t\}$.}
	
	The irreducible characters of $W$ are indexed by pairs of partitions $(\mu,\nu)$ such that $|\mu|+|\nu|=n$. Remark that it is allowed for $\mu$ or $\nu$ to be the empty partition $\emptyset$. Again, we will not compute all values, as there are far too many, but focus on the ones giving the smallest and largest eigenvalue. 
	
	For any irreducible character $\chi_{(\mu,\nu)}$ of $W$, or equivalently its group algebra $\C W$, there exists a central character $\omega_{(\mu,\nu)}$, which is a character of the center $Z(\C W)$ of the group algebra \cite[Section 6.2.1]{GP00}. A basis of $Z(\C W)$ is $\{\widehat{\mathcal{C}} = \sum_{g \in \mathcal{C}}g \,\, | \,\, \mathcal{C} \text{ a conjugacy class}\}$ and the evaluation of a central character can be computed as
	\[\omega_{(\mu,\nu)}(\widehat{\mathcal{C}}) = |\mathcal{C}|\frac{\chi_{(\mu,\nu)}(g)}{\chi_{(\mu,\nu)}(1)} \hspace{.5cm} \text{ for } g \in \mathcal{C}.\]
	
	This ratio is exactly the one we are interested in for the eigenvalue computation (see step 2 in \Cref{algorithmmaximal}) and the advantage is that combinatorial formulae exist to evaluate central characters. The same principle underlies the methodology in type $A_n$, although we did not explicitly mention this.
	
	\item \textbf{Determine $\chi(w_0)$ and compare to $\chi(1)$ to determine the sign.}
	
	As $w_0$ is central in $W$, we always have $|\chi(w_0)| = \chi(1)$ for any character $\chi$, which is much easier than type $A_n$. Determining the sign is a simple matter as \cite[Proposition 4.8]{Ram97} tells us that $\chi_{(\mu,\nu)}(w_0) = (-1)^{|\nu|}\chi_{(\mu,\nu)}(1)$.
	
	\item \textbf{Compute the eigenvalue $\lambda_\chi = \mathrm{sgn}(\chi)q_s^{e_s/2}q_t^{e_t/2}$.}
	
	The only thing that remains is to compute the structure constants $q_s$ and $q_t$. There is a combinatorial way to find them, which can also be adapted to type $A_n$.
	
	A polar frame in a polar space of rank $n$ is a set of $2n$ points $\{e_{-n},\dots,e_{-1},e_1,\dots,e_n\}$ such that $e_i$ is collinear to all points except $e_{-i}$ for all $-n \leq i \leq n$, see \cite[Definition 10.4.1]{BC13}. It may be clear from the construction of $W$ as a permutation group that there is an action of $W$ on any polar frame, respecting the underlying geometry. Given a polar frame, we can consider the maximal flag $F =\{\langle e_1 \rangle, \langle e_1,e_2 \rangle, \dots, \langle e_1,\dots,e_n \rangle \}$ and identify it with its stabilizer $B$. Then $sB = s_1B$ corresponds to the flag $\{\langle e_2 \rangle, \langle e_1,e_2 \rangle, \dots, \langle e_1,\dots,e_n \rangle \}$, i.e. a flag that has every space of type $i$ in common with $F$ except for $i=1$. The double coset $BsB$ is the image of this flag under the stabilizer $B$ and hence contains all flags with that intersection property. As there are $q$ ways to change the point $\langle e_1 \rangle$ on the line $\langle e_1, e_2 \rangle$ to a different one, it follows that the number $|BsB/B|$ of maximal flags in $BsB$ equals $q$.
	
	In the same vein is the double coset $BtB$ the set of all maximal flags that have all spaces in common with $F$, except that of type $n$. There are $q^e$ ways to choose a different generator through the subspace of type $n-1$ and hence $|BtB/B| = q^e$.
\end{enumerate}

All that remains is to complete the second step. Recall the invariants $a(\mu) = \sum_{i=1}^{k}(i-1)\mu_i$ and $a^*(\mu) = \sum_{i=1}^{k}{\mu_i \choose 2}$ for a partition $\mu = [\mu_1,\dots,\mu_k]$.

\begin{prop}\cite[Propositions 6.2.6 and 6.2.8]{GP00}
	Let $\chi_{(\mu,\nu)} \in \mathrm{Irr}(W)$, then we have, with $\widehat{s} = \widehat{s^W}$ and $\widehat{t} = \widehat{t^W}$,
	\begin{align*}
	    &\omega_{(\mu,\nu)}(\widehat{s}) = 2(a^*(\mu)+a^*(\nu)-a(\mu)-a(\nu)), \\
	    &\omega_{(\mu,\nu)}(\widehat{t}) = |\mu|-|\nu|.
	\end{align*}
\end{prop}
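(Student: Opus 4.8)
The plan is to recognise both quantities as the scalars by which the central class sums $\widehat{s}$ and $\widehat{t}$ act on an irreducible module, and to compute them from the Clifford-theoretic model of $\mathrm{Irr}(W)$. Recall that $W$ of type $B_n$ is the wreath product $(\Z/2)^n \rtimes \Sym(n)$, with normal subgroup $A = (\Z/2)^n$ generated by the sign changes $t_k = (k,-k)$ and $\Sym(n)$ permuting the coordinates. Writing $a = |\mu|$ and $b = |\nu|$ with $a+b=n$, the module affording $\chi_{(\mu,\nu)}$ is induced from the subgroup $W(B_a)\times W(B_b)$ stabilising the linear character of $A$ that is trivial on the first $a$ coordinates and the sign on the last $b$; on the two factors one uses the irreducibles labelled $(\mu,\emptyset)$ and $(\emptyset,\nu)$. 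Since $\omega_{(\mu,\nu)}(\widehat{\mathcal{C}}) = |\mathcal{C}|\,\chi_{(\mu,\nu)}(g)/\chi_{(\mu,\nu)}(1)$ is exactly the scalar by which $\widehat{\mathcal{C}}$ acts on this module, the whole proposition reduces to identifying these two scalars.

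The formula for $\widehat{t}$ I would settle first, working entirely inside $A$. The element $\widehat{t} = \sum_{k=1}^n t_k$ is fixed under conjugation, and on an $A$-weight space whose linear character is $-1$ on exactly the coordinates of a set $T$ it acts by $(n-|T|)-|T| = n-2|T|$. By the induced construction every $A$-weight occurring in $\chi_{(\mu,\nu)}$ is a $W$-conjugate of the chosen character and hence has exactly $|T| = b$ negative coordinates, so $\widehat{t}$ acts by the single scalar $n-2b = |\mu|-|\nu|$.

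The formula for $\widehat{s}$ is the substantial part. Using $\sigma_{ij} = (i,j)(-i,-j)$ and the identity $(i,-j)(-i,j) = \sigma_{ij}t_it_j$, the class of $s$ consists of the $n(n-1)$ elements $\sigma_{ij}$ and $\sigma_{ij}t_it_j$ for $i<j$, so $\widehat{s} = \sum_{i<j}\sigma_{ij}(1+t_it_j)$. I would introduce the truncations $d_k = \sum_{i<k}\sigma_{ik}(1+t_it_k)$, the type-$B$ Jucys--Murphy elements, which satisfy $d_k = \widehat{s}^{(k)} - \widehat{s}^{(k-1)}$, where $\widehat{s}^{(k)}$ is the positive-transposition class sum of $W(B_k)$ in the chain $W(B_1)<\cdots<W(B_n)$. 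Two structural facts then come for free: since $\widehat{s}^{(k)}$ is central in $\C W(B_k)$ and $\widehat{s}^{(k-1)}$ in $\C W(B_{k-1})$, the difference $d_k$ centralises $\C W(B_{k-1})$; consequently the $d_k$ commute with one another, and because the branching rule $\chi_{(\mu,\nu)}\downarrow_{W(B_{k-1})} = \bigoplus_{\mu^-}\chi_{(\mu^-,\nu)}\oplus\bigoplus_{\nu^-}\chi_{(\mu,\nu^-)}$ is multiplicity-free, each $d_k$ acts as a scalar on every line of the Gelfand--Tsetlin basis indexed by standard bitableaux. It remains to show that this scalar is $2c$, where $c$ is the ordinary content of the box occupied by the entry $k$ (uniformly, with no shift distinguishing boxes of $\mu$ from those of $\nu$, since $\widehat{s}$ involves no sign-change generator). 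Granting this, $\widehat{s} = \sum_k d_k$ acts by $2\sum_{b\in\mu\sqcup\nu}c(b)$, and the elementary identity $\sum_{b\in\lambda}c(b) = a^*(\lambda)-a(\lambda)$, obtained by summing the content $\mathrm{col}(b)-\mathrm{row}(b)$ over the Young diagram of $\lambda$, yields $2(a^*(\mu)+a^*(\nu)-a(\mu)-a(\nu))$.

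The main obstacle is precisely this last eigenvalue computation: the commuting and scalar-action structure is essentially formal, but pinning the value to $2c$ — and in particular verifying the absence of any shift on the $\nu$-boxes — requires genuine input. I would obtain it either by the Okounkov--Vershik local analysis of how $d_k$ acts across an edge of the branching graph, seeded by a direct evaluation on a single weight vector, or, staying closer to the paper's combinatorial spirit, through the Frobenius characteristic for wreath products, under which $\chi_{(\mu,\nu)}$ corresponds to the product $s_\mu\,s_\nu$ in two alphabets and the two class sums act as content-derivations; this symmetric-function route is the one underlying \cite[Section 6.2]{GP00} and reproduces the same $a$- and $a^*$-invariants.
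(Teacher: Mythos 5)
First, a point of comparison: the paper gives no proof of this proposition at all — it is imported verbatim from Geck--Pfeiffer \cite[Propositions 6.2.6 and 6.2.8]{GP00} — so the only fair benchmark is whether your argument would stand as a self-contained proof. Your treatment of $\widehat{t}$ does: every $A$-weight occurring in the induced module is a $\Sym(n)$-conjugate of the base character and hence has exactly $|\nu|$ negative coordinates, so the central element $\sum_k t_k$ acts by $(n-|\nu|)-|\nu| = |\mu|-|\nu|$. That part is complete and correct, as are all the structural claims in the $\widehat{s}$ part: the decomposition $\widehat{s}=\sum_k d_k$ into type-$B$ Jucys--Murphy elements, their commutativity, the multiplicity-free branching and resulting diagonal action on the Gelfand--Tsetlin basis, and the identity $\sum_{b\in\lambda}c(b)=a^*(\lambda)-a(\lambda)$. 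The problem is that the one statement carrying all the content — that $d_k$ acts by \emph{twice the ordinary content} of the box of $k$, with no shift distinguishing $\nu$-boxes from $\mu$-boxes — is granted rather than proven. The claim is true (it is the wreath-product Jucys--Murphy eigenvalue theorem, and either of your two proposed routes would establish it), but as written, "I would obtain it by Okounkov--Vershik or by Frobenius characteristics" is a pointer to the literature, not an argument; the proof has a hole at exactly its one nontrivial point.

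You can close that hole with far less machinery, using only the Clifford model you already set up together with the type-$A$ formula the paper quotes as \Cref{prop:ainvariants}. For any subgroup $H \le W$, any character $\theta$ of $H$, and any conjugacy class $\mathcal{C}$ of $W$ with representative $c$, the induced character formula gives
\[
\frac{|\mathcal{C}|\,(\mathrm{Ind}_H^W\theta)(c)}{(\mathrm{Ind}_H^W\theta)(1)}
= \frac{1}{\theta(1)}\sum_{h\in\mathcal{C}\cap H}\theta(h),
\]
since each $h\in\mathcal{C}\cap H$ arises as $x^{-1}cx$ for exactly $|W|/|\mathcal{C}|$ elements $x \in W$. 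Apply this with $H = W(B_a)\times W(B_b)$ and $\theta$ the character you describe: $\chi_\mu$ pulled back on the first factor, $\chi_\nu$ pulled back and twisted by the sign-change character $\delta$ on the second. For $\mathcal{C}=s^W$, the elements of $\mathcal{C}$ lying in $H$ are the $\sigma_{ij}$ and $\sigma_{ij}t_it_j$ with $i<j\le a$ or $a<i<j\le n$; since $\pi$ kills sign changes and $\delta(t_it_j)=1$, the two elements of each pair have the same $\theta$-value, so the sum collapses to
\[
2\binom{a}{2}\frac{\chi_\mu(r)}{\chi_\mu(1)}
+ 2\binom{b}{2}\frac{\chi_\nu(r)}{\chi_\nu(1)}
= 2\bigl(a^*(\mu)-a(\mu)\bigr) + 2\bigl(a^*(\nu)-a(\nu)\bigr)
\]
by \Cref{prop:ainvariants} applied in $\Sym(a)$ and $\Sym(b)$. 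For $\mathcal{C}=t^W$ one has $t^W\subseteq H$ and $\theta(t_k)/\theta(1)$ equal to $+1$ for $k\le a$ and $-1$ for $k>a$, giving $a-b=|\mu|-|\nu|$ at once (an alternative to your weight-space argument). This route is elementary, needs no Gelfand--Tsetlin theory, and reduces the type-$B$ statement to the type-$A$ statement already used in the paper, which is also how the cited source organizes the computation.
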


Combining everything, this implies that the exponent of $q$ in the eigenvalue $\lambda_\chi$, where $\chi = \chi_{(\mu,\nu)} \in \mathrm{Irr}(W)$, equals
\begin{align}\label{eq:exponenttypeB}
\frac{n(n-1)}{2}+(a^*(\mu)+a^*(\nu)-a(\mu)-a(\nu))+e|\mu|,
\end{align}
and its sign is $(-1)^{|\nu|}$.

\begin{lemma}\label{lem:eigenvaluestypeB}
	The largest eigenvalue is obtained for $\chi_{(\mu,\nu)} = \chi_{([n],\emptyset)}$ and equals $q^{n(n+e-1)}$. The smallest eigenvalue is obtained for $\chi_{(\mu,\nu)} = \chi_{([n-1],[1])}$, except when $e \in \{0,1/2\}$ and $n$ is odd, then it is obtained from the character $\chi_{(\emptyset,[n])}$. The corresponding eigenvalues are $-q^{(n-1)(n+e-1)}$ and $-q^{n(n-1)}$ respectively. 
\end{lemma}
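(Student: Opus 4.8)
The plan is to treat the exponent of $q$ in $\lambda_\chi$, given by \eqref{eq:exponenttypeB}, as a function of the pair $(\mu,\nu)$ and to optimise it separately over the two sign classes, since $\mathrm{sgn}(\chi_{(\mu,\nu)}) = (-1)^{|\nu|}$. Writing $E(\mu,\nu)$ for the exponent and setting $g(\lambda) := a^*(\lambda) - a(\lambda)$, we have $E(\mu,\nu) = \tfrac{n(n-1)}{2} + g(\mu) + g(\nu) + e|\mu|$. The largest eigenvalue is $q^{E^+}$, where $E^+$ is the maximum of $E$ over characters with $|\nu|$ even, and the smallest eigenvalue is $-q^{E^-}$, where $E^-$ is the maximum of $E$ over characters with $|\nu|$ odd. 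Thus everything reduces to two constrained maximisations of the single combinatorial quantity $E$.

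The first step is a single-partition bound, in the spirit of \Cref{lemma:partitionmaxvalue}: for every partition $\lambda$ of $m$ one has $g(\lambda) \le \binom{m}{2}$, with equality iff $\lambda = [m]$. This follows because $a^*(\lambda) = \sum_i \binom{\lambda_i}{2} \le \binom{m}{2}$ by superadditivity of $x \mapsto \binom{x}{2}$ on the nonnegative integers (equality iff at most one part is nonzero), while $a(\lambda) = \sum_i (i-1)\lambda_i \ge 0$ (equality iff $\lambda$ has a single row). Substituting this into $E$ and abbreviating $m = |\mu|$, so that $|\nu| = n-m$, a short computation collapses the upper bound to a quadratic: $E(\mu,\nu) \le \Phi(m) := n(n-1) + m^2 - (n-e)m$, and the bound is attained exactly at the single-row pair $(\mu,\nu) = ([m],[n-m])$. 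Crucially $\Phi$ is an upward parabola in $m$, so on any set of admissible values of $m$ its maximum is attained at an extreme admissible value.

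For the largest eigenvalue there is no parity restriction beyond $|\nu|$ even, so I would maximise $\Phi$ over all $0 \le m \le n$; convexity forces the maximum to an endpoint, and since $e \ge 0$ the term $-(n-e)m$ makes $\Phi(n) = n(n+e-1) \ge \Phi(0) = n(n-1)$. The maximiser $m = n$ corresponds to $(\mu,\nu) = ([n],\emptyset)$, which has $|\nu| = 0$ even, so the largest eigenvalue is $q^{n(n+e-1)}$. For the smallest eigenvalue I would restrict to $|\nu| = n-m$ odd, i.e. $m \not\equiv n \pmod 2$; the admissible $m$ form an arithmetic progression of step $2$, whose extreme members are $m = n-1$ (always admissible) and the smallest admissible value, namely $m = 0$ when $n$ is odd and $m = 1$ when $n$ is even. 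Evaluating gives $\Phi(n-1) = (n-1)(n+e-1)$, realised by $([n-1],[1])$, and, when $n$ is odd, $\Phi(0) = n(n-1)$, realised by $(\emptyset,[n])$; their difference is $(n-1)(e-1)$, which selects $([n-1],[1])$ for $e \ge 1$ and $(\emptyset,[n])$ for $e \in \{0,\tfrac12\}$. For even $n$ one checks $\Phi(1) = (n-1)^2 + e \le \Phi(n-1)$, so $([n-1],[1])$ always wins. Reading off signs yields the claimed smallest eigenvalues $-q^{(n-1)(n+e-1)}$ and $-q^{n(n-1)}$ in the respective cases.

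I expect the main obstacle to be the parity-constrained optimisation for the smallest eigenvalue: unlike the largest eigenvalue, the parity of $|\nu|$ genuinely excludes the global optimum $m = n$, forcing a comparison of the two extreme admissible endpoints and producing exactly the case split on $e$ and on the parity of $n$ recorded in the statement. The boundary behaviour (for instance $e = 0$, where $\Phi(0) = \Phi(n)$ and several characters may tie) needs a brief check to confirm that the stated representatives still realise the extremal eigenvalues, but this is routine bookkeeping once the parabola $\Phi$ is in hand.
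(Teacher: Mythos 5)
Your proposal is correct and follows essentially the same route as the paper: both reduce everything to the exponent formula \eqref{eq:exponenttypeB}, use single-row dominance of $a^*-a$ (the paper via \Cref{lemma:partitionmaxvalue}, you via superadditivity of $\binom{x}{2}$) to restrict attention to pairs $([n-k],[k])$, and then compare the extreme admissible values of the resulting quadratic in $|\mu|$. The only organizational difference is that you fix the sign class $|\nu|$ odd \emph{before} optimising, which lets you bypass the paper's extra candidate $\chi_{([n-1,1],\emptyset)}$ and replaces its appeal to ``a short computation'' with an explicit convexity/endpoint argument for $\Phi$.
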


\begin{proof}
	Fix $k$ between 0 and $n$, and consider the pairs of partitions $(\mu,\nu)$ with $|\mu| = n-k$ and $|\nu|=k$. From \Cref{lemma:partitionmaxvalue} we can see that $([n-k],[k])$ has the highest value for the exponent \eqref{eq:exponenttypeB} among all such partitions. Varying over $k$, it follows from a short computation that the largest value is attained by $([n],\emptyset)$. The corresponding sign is $(-1)^0$, so that this character leads to the largest eigenvalue.
	
	The candidates for the second largest exponent are $([n-1,1],\emptyset)$ and $([n-k],[k])$ for $1 \leq k \leq n$. Remark that the first one always leads to a positive eigenvalue, while the second one only leads to a negative eigenvalue if $k$ is odd. Another short computation shows that $([n-1],[1])$ gives the largest value in \eqref{eq:exponenttypeB}, except when $e \in \{0,1/2\}$, as it is then found on the pair $(\emptyset,[n])$. The sign for the former is always negative, whereas the latter has sign $(-1)^n$ and hence only gives rise to a negative eigenvalue when $n$ is odd. When $e \in \{0,1/2\}$ and $n$ is even, the smallest eigenvalue is again found by the character $\chi_{([n-1],[1])}$.
\end{proof}

\begin{remark}
	When $e = 0$ and $n$ is even, we can deduce from this proof that $(\mu,\nu) = (\emptyset,[n])$ also leads to the maximal eigenvalue $q^{n(n-1)}$. This is to be expected: recall that when $n$ is even, generators can only be opposite to generators in the same class and hence the opposition graph is the disjoint union of two isomorphic graphs.
\end{remark}

This completes the proof of \Cref{thm:boundtypeBmaximal}. Again, the exponent of the largest eigenvalue equals $\ell(w_0)$, but quite literally with a twist. For the untwisted groups we have $e=1$ and the exponent matches $\ell(w_0)$ in \Cref{table:weylgroups} exactly. On the other hand, for the twisted groups, the length function is actually modified slightly. In this case, the generators $s_i$ have length 1, but $t$ has length $e$. Now all that one needs to know is that $w_0$ contains $n$ times the generator $t$, since $w_0 = t(s_1ts_1)(s_2s_1ts_1s_2)\dots(s_n\dots s_1ts_1 \dots s_n)$ \cite[Example 1.4.6]{GP00}

We can extend this upper bound to partial flags as well, by showing that the relevant characters appear in the decomposition of $\mathrm{ind}_{W_J}^W(1_{W_J})$, similarly as in the proof of \Cref{thm:boundtypeApartial} 

\begin{theorem}\label{thm:boundtypeBpartial}
	If $C$ is an EKR-set of flags of cotype $J$ in a polar space of rank $n$ and parameter $e$, then
	\[|C| \leq \frac{v}{1+q^{n+e-1}},\]
	where $v = [G:P_J]$ is the total number of flags of cotype $J$, except for $e \leq \frac{1}{2}, n \notin J$ and $n$ odd, where we can improve this bound to
	\[|C| \leq \frac{v}{1+q^{n/2}}, \text{ for } e = \frac{1}{2}, \]
	or
	\[|C| \leq \frac{v}{2}, \text{ for } e = 0.\]
\end{theorem}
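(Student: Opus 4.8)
The plan is to mirror exactly the strategy used for type $A_n$ in the proof of \Cref{thm:boundtypeApartial}, now applied to the characters isolated in \Cref{lem:eigenvaluestypeB}. The Delsarte--Hoffman denominator in the bound for maximal flags is $1 - k/\alpha = 1 + q^{\ell(w_0) - |\alpha\text{-exponent}|}$, and by \Cref{algorithmpartial} the eigenvalues for partial flags of cotype $J$ are obtained from the maximal ones by the \emph{uniform} rescaling $\mu_\chi = \lambda_\chi q^{-\ell}$, where $\ell$ is the length of the longest word in $W_J$. Hence the ratio $\mu_{\chi_{\text{top}}}/\mu_{\chi_{\text{bot}}} = \lambda_{\chi_{\text{top}}}/\lambda_{\chi_{\text{bot}}}$ is unchanged, so the denominator $1-k/\alpha$ is \emph{identical} for partial flags \emph{provided} both the largest-eigenvalue character and the smallest-eigenvalue character actually survive, i.e. appear with nonzero multiplicity in $\mathrm{ind}_{W_J}^W(1_{W_J})$. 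Thus the entire theorem reduces to a multiplicity (survival) check, case by case according to the outcome of \Cref{lem:eigenvaluestypeB}.

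First I would handle the largest eigenvalue, which always comes from the trivial character $\chi_{([n],\emptyset)}$: by Frobenius reciprocity $\langle \mathrm{ind}_{W_J}^W(1_{W_J}), 1_W\rangle_W = \langle 1_{W_J}, 1_{W_J}\rangle_{W_J} = 1$, so it survives for every $J$, exactly as in type $A_n$. Next, for the smallest eigenvalue in the generic regime ($e \geq 1$, or $e \in \{0,\tfrac12\}$ with $n$ even), the relevant character is $\chi_{([n-1],[1])}$. I expect this to be, up to the extra $[1]$-block bookkeeping of type $B_n$, the analogue of the standard representation $\chi_{[n,1]}$; concretely $\chi_{([n-1],[1])}$ should be realisable inside the natural $2n$-dimensional reflection-type module, and restricting to $W_J = W_{B_{a_0}} \times \Sym(a_1{+}1)\times\cdots$ one decomposes that module into $W_J$-invariant pieces and checks that the trivial $W_J$-character occurs. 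This is the step that requires genuine (but still elementary) work: one must use the explicit description of $W_J$ as a product of smaller $B$- and $A$-type Weyl groups (the block containing the node $t$ being of type $B$), describe $\chi_{([n-1],[1])}$ as a module, and verify a nonzero inner product with $1_{W_J}$.

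The main obstacle is the exceptional regime $e \in \{0,\tfrac12\}$ with $n$ odd, where \Cref{lem:eigenvaluestypeB} gives the smallest eigenvalue from $\chi_{(\emptyset,[n])}$ instead, and where the bound \emph{improves} only when $n \notin J$. Here I would argue on two fronts. When $n \notin J$ (so the $t$-node is \emph{not} in $W_J$, i.e. $W_J$ is a product of type-$A$ factors sitting inside the $\Sym(n) = \langle s_1,\dots,s_{n-1}\rangle$ subgroup), one computes $\langle \mathrm{ind}_{W_J}^W(1_{W_J}), \chi_{(\emptyset,[n])}\rangle_W = \langle 1_{W_J}, \chi_{(\emptyset,[n])|W_J}\rangle_{W_J}$ and shows the restriction of $\chi_{(\emptyset,[n])}$ to such a parabolic contains the trivial constituent, yielding survival and hence the improved bound. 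When $n \in J$, by contrast, one must check that $\chi_{(\emptyset,[n])}$ does \emph{not} survive (so the improved constant is unavailable and the weaker bound holds), which should follow because $\chi_{(\emptyset,[n])}$ is the $\nu$-indexed sign-like character whose restriction to a $W_J$ containing the $t$-generator no longer contains $1_{W_J}$; the next surviving character then reverts the denominator to the generic $1+q^{n+e-1}$. Carefully matching the four sub-cases of \Cref{lem:eigenvaluestypeB} to these multiplicity computations, and confirming that in each case the \emph{pair} (top,bottom) of surviving characters reproduces the claimed denominator, is where the bookkeeping is delicate, though each individual inner product is a routine Frobenius-reciprocity calculation.
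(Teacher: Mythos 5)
Your plan is essentially the paper's own proof: reduce everything to Frobenius-reciprocity survival checks in $\mathrm{ind}_{W_J}^W(1_{W_J})$, with the trivial character $\chi_{([n],\emptyset)}$ surviving always, $\chi_{([n-1],[1])}$ surviving always via a decomposition of the reflection module into $W_J$-invariant summands (the paper uses the $n$-dimensional reflection representation, extending the $\Sym(n)$-action by $t\cdot e_n=-e_n$, rather than your $2n$-dimensional permutation-type module, but the argument is the same), and $\chi_{(\emptyset,[n])}$ surviving if and only if $n\notin J$. The paper makes your ``sign-like character'' heuristic precise by identifying $\chi_{(\emptyset,[n])}$ as the linear character with $\chi(s_i)=1$, $\chi(t)=-1$, from which the if-and-only-if survival criterion is immediate; otherwise your case analysis matches the paper's.
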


\begin{proof}
	We will show that the characters $\chi_{([n],\emptyset)}$ and $\chi_{([n-1],[1])}$ survive upon restriction to any parabolic subgroup $W_J$ whereas $\chi_{(\emptyset,[n])}$ survives whenever $n$ is contained in the type of the flag, or equivalently, $n \notin J$.
	
	The irreducible character $\chi_{([n],\emptyset)}$ is the trivial character \cite[Section 5.5]{GP00}, and its restriction to any parabolic subgroup $W_J$ is hence the trivial character $1_{W_J}$, so that it appears with multiplicity 1 in $\mathrm{ind}_{W_J}^W(1_{W_J})$.
	
	Now define the the irreducible linear character $\chi$ such that $\chi(s_i) = 1$ for $1 \leq i \leq n-1$ and $\chi(t) = -1$. Then for any irreducible character $\chi_{(\mu,\nu)}$ we have $\chi \cdot \chi_{(\mu,\nu)} = \chi_{(\nu,\mu)}$ \cite[Theorem 5.5.6]{GP00}. In particular, $\chi_{(\emptyset,[n])}$ equals $\chi$. Therefore, its restriction to $W_J$ is trivial if and only if $n \notin J$. We conclude that it appears with multiplicity 1 in the decomposition if and only if $n \notin J$.
	
	Lastly, the character $\chi_{([n-1],[1])}$ is the irreducible character of the standard reflection representation of $W$ defined as follows: recall the standard representation of $\langle s_1,\dots, s_{n-1} \rangle = \Sym(n) \leq W$ on $\C^n = \langle e_1, \dots, e_n \rangle$. We can extend this to a representation of $W$ by defining $t \cdot e_n = -e_n$ and it turns out that the corresponding character is $\chi_{([n-1],[1])}$, see \cite[Section 1.4.1 and Proposition 5.5.7]{GP00}. Now the proof is again completely analogous as before: any parabolic subgroup $W_J$ is isomorphic to $\Sym(a_1+1) \times \dots \times \Sym(a_k+1) \times W_{b}$ for some positive integers $a_1, \dots, a_{k},b$ with $a_1 + \dots + a_{k} + b < n$ and where $W_b$ is a Weyl group of type $B$ and rank $b$. Then the module $\C^n$ of the standard representation decomposes into $W_J$-invariant submodules as $\C^{a_1} \oplus \dots \oplus \C^{a_k} \oplus \C^b \oplus U$, where the action of on $U$ is the trivial one. Since $\dim(U) = n-(a_1+\dots+a_k+b) > 0$, the result again follows. We remark that this also follows from \cite[Remark 6.3.7]{GP00}.
\end{proof}

\subsubsection{Reaching the upper bound}

We are now in a position to figure out the geometrical interpretation of opposition and show that it is related to previous literature. In particular, we will see that \Cref{thm:boundtypeBpartial} recovers and generalizes known bounds. A similar proof as for \Cref{lem:oppositiongeometricallytypeA} in type $A_n$ works here as well, but we can give a slightly different proof that is more inspired by the theory of buildings. 

\begin{lemma}
	Two maximal flags $F_1$ and $F_2$ are opposite if and only if $F_1 =\{\langle e_1 \rangle, \langle e_1,e_2 \rangle, \dots, \langle e_1,\dots,e_n \rangle \}$ and $F_2 = \{\langle e_{-1} \rangle, \langle e_{-1},e_{-2} \rangle, \dots, \langle e_{-1},\dots,e_{-n} \rangle \}$ for some polar frame $\{e_{-n},\dots,e_{-1},e_1,\dots,e_n\}$.
\end{lemma}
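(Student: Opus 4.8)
The plan is to reduce both implications to the single fact that the \emph{standard} maximal flag $F_1^0$ (the one stabilised by $B$) is opposite to its image $w_0\cdot F_1^0$, and then to transport this statement by the group action, exploiting that oppositeness is nothing but the $G$-invariant orbital $R_{w_0}=\{(kB,kw_0B)\mid k\in G\}$.

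For the backward implication I would start from an arbitrary polar frame $\{e_{-n},\dots,e_{-1},e_1,\dots,e_n\}$ and let $F_1,F_2$ be the two flags written in the statement. Since $G$ acts transitively on polar frames (Witt's theorem), pick $g\in G$ mapping this frame to the standard one; then $g\cdot F_1$ is the standard flag corresponding to $B$ and, because $w_0$ acts on the frame by $e_i\mapsto e_{-i}$ (recall $w_0=(-n,n)\cdots(-1,1)$), one gets $g\cdot F_2\leftrightarrow w_0B$. As $(B,w_0B)\in R_{w_0}$ (take $k=1$), the pair $(g\cdot F_1,g\cdot F_2)$ lies in $R_{w_0}$, and since $R_{w_0}$ is $G$-invariant so does $(F_1,F_2)$. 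Hence $F_1$ and $F_2$ are opposite.

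For the forward implication, suppose $F_1,F_2$ are opposite. Using transitivity of $G$ on maximal flags I may assume $F_1\leftrightarrow B$, i.e. $F_1=\{\langle e_1\rangle,\dots,\langle e_1,\dots,e_n\rangle\}$ for the standard frame. From $(B,F_2)\in R_{w_0}$ I read off $F_2=b w_0 B$ for some $b\in B$; geometrically $F_2=b\cdot(w_0\cdot F_1)$, and since $w_0$ sends $e_i$ to $e_{-i}$ one has $w_0\cdot F_1=\{\langle e_{-1}\rangle,\dots,\langle e_{-1},\dots,e_{-n}\rangle\}$. Because $b\in B$ fixes $F_1$, I obtain $F_1=b\cdot F_1=\{\langle be_1\rangle,\dots,\langle be_1,\dots,be_n\rangle\}$ while $F_2=\{\langle be_{-1}\rangle,\dots,\langle be_{-1},\dots,be_{-n}\rangle\}$. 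As $b$ is an automorphism of the polar space it carries the standard polar frame to the polar frame $\{be_{-n},\dots,be_n\}$; relabelling $be_i$ as $e_i$ puts $F_1$ and $F_2$ into exactly the desired form.

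The genuinely load-bearing inputs, and hence the steps I expect to be the main obstacle, are the transitivity of $G$ both on maximal flags and on polar frames (which is Witt's theorem, and deserves a remark in the twisted cases and in the $\PGO^+$/oriflamme setting where the two classes of generators must be handled), together with the fact that every element of $B\le G$ is a collineation preserving the form and therefore sends polar frames to polar frames. Everything else is the formal manipulation of the orbital $R_{w_0}=\{(kB,kw_0B)\}$ and the explicit action $w_0\cdot e_i=e_{-i}$, plus the elementary bookkeeping that a flag depends only on its chain of subspaces, so relabelling the individual frame vectors is harmless.
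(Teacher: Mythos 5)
Your proof is correct, and it reaches the statement by a genuinely different route than the paper. The paper leans on a single building-theoretic fact, the common-apartment property for polar spaces (\cite[Theorem 10.4.6]{BC13}): any two maximal flags can be written over one common polar frame. It then notes that the $2^nn!$ maximal flags built from a fixed frame are exactly the flags $w\cdot F$, $w\in W$, so every double coset $BwB$ has the representative $wB$ among them; since the relations $R_w$ are pairwise disjoint, oppositeness forces $F_2=w_0\cdot F_1$, and the explicit form follows from $w_0=(1,-1)\cdots(n,-n)$. You never invoke that theorem. Instead, for the ``if'' direction you combine Witt-type transitivity of $G$ on labelled polar frames with the $G$-invariance of the orbital $R_{w_0}=\{(hB,h\dot w_0B)\mid h\in G\}$, and for the ``only if'' direction you use the Bruhat-type description $F_2=b\dot w_0B$ with $b\in B$, together with the facts that $b$ fixes $F_1$ as a flag and carries polar frames to polar frames, then relabel. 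In effect your second half re-proves, via the Bruhat decomposition, precisely the instance of the common-apartment property that the paper cites: the frame $\{be_{-n},\dots,be_n\}$ contains both flags. The trade-off is this: the paper's argument is shorter given the cited theorem and yields at once the more general fact that two flags of a common frame are in relation $R_w$ exactly when they differ by $w$, which is what makes \Cref{cor:oppositiongeometricallytypeB} immediate afterwards; your argument is more self-contained on the group-action side, but it must take transitivity on ordered polar frames as an input and, as you rightly flag, this should be checked (or at least referenced) separately for each of the families covered by the lemma, including the twisted unitary and elliptic orthogonal cases. Both external inputs are legitimate, so there is no gap in either direction.
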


\begin{proof}
	By \cite[Theorem 10.4.6]{BC13} we know that for any two maximal flags $F_1$ and $F_2$ there exists a polar frame such that $F_1 =\{\langle e_1 \rangle, \langle e_1,e_2 \rangle, \dots, \langle e_1,\dots,e_n \rangle \}$ and $F_2 =\{\langle e_{i_1} \rangle, \langle e_{i_1},e_{i_2} \rangle, \dots, \langle e_{i_1},\dots,e_{i_n} \rangle \}$, where $\{i_1,\dots,i_n\} \subseteq \{-n,\dots,n\}$.
	
	In general, for any given polar frame there are exactly $2^nn!$ distinct maximal flags that can be constructed from the points in the polar frame. Fixing one such maximal flag $F$, one sees that all such maximal flags are exactly the flags $w \cdot F$ for $w \in W$ (where the action of $W$ on the polar frame induces an action on any maximal flag constructed from that frame). This means that if we identify $F$ with its stabilizer $B$, then every double coset $BwB$, $w \in W$, has a representative $wB$ among these maximal flags. 
	
	In our case, we can set $F = F_1$ and by assumption $F_2 = w_0 \cdot F$. Recalling that $w_0 = (1,-1)\dots(n,-n)$, it follows immediately that $F_2$ should be the flag $\{\langle e_{-1} \rangle, \langle e_{-1},e_{-2} \rangle, \dots, \langle e_{-1},\dots,e_{-n} \rangle \}$.
\end{proof}

Since the action of $w_0$ on the Dynkin diagram is trivial, a subspace of type $k$ can only be opposite to another subspace of type $k$. From the previous lemma and its proof, we immediately obtain the following corollary.

\begin{cor}\label{cor:oppositiongeometricallytypeB}
	Two $k$-dimensional subspaces $U$ and $V$ are opposite (in the group-theoretical sense) in a polar space if and only if no point of $U$ is collinear with all points of $V$ and vice versa.
\end{cor}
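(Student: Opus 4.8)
The plan is to translate the group-theoretic opposition of two $k$-subspaces into the polar-frame description supplied by the preceding lemma, and then read off the geometric condition from the defining property of a polar frame. By \Cref{def:oppositionofflags}, together with the triviality of the $w_0$-action on the Dynkin diagram noted just above, two $k$-dimensional subspaces $U$ and $V$ are opposite precisely when they extend to a pair of opposite maximal flags. By the preceding lemma such a pair is, for a suitable polar frame $\{e_{-n},\dots,e_{-1},e_1,\dots,e_n\}$, of the form $F_1=\{\langle e_1\rangle,\dots,\langle e_1,\dots,e_n\rangle\}$ and $F_2=\{\langle e_{-1}\rangle,\dots,\langle e_{-1},\dots,e_{-n}\rangle\}$. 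Hence $U$ and $V$ are opposite if and only if there is a polar frame with $U=\langle e_1,\dots,e_k\rangle$ and $V=\langle e_{-1},\dots,e_{-k}\rangle$, and the whole proof reduces to showing that the existence of such a frame is equivalent to the stated collinearity condition.

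For the forward implication I would simply compute inside the frame. Writing collinearity of two points of the polar space as perpendicularity with respect to the associated reflexive form $\langle\cdot,\cdot\rangle$, the defining property of a polar frame is that $e_i$ is perpendicular to every $e_j$ except $e_{-i}$, so $\langle e_i,e_{-j}\rangle=0$ for $i\neq j$ and $\langle e_i,e_{-i}\rangle\neq 0$. Thus for a point $p=\sum_{i=1}^k a_ie_i$ of $U$, being collinear with all of $V=\langle e_{-1},\dots,e_{-k}\rangle$ forces $\langle p,e_{-j}\rangle=a_j\langle e_j,e_{-j}\rangle=0$, i.e. $a_j=0$, for every $j\leq k$; hence $p=0$ and no point of $U$ is collinear with all of $V$. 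The symmetric computation yields the ``vice versa'' part, so the geometric condition holds.

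For the converse I would run this argument backwards. The collinearity condition says exactly that the pairing $U\times V\to \Fq$, $(u,v)\mapsto\langle u,v\rangle$, has trivial left and right radical, that is $U\cap V^\perp=V\cap U^\perp=0$, so the pairing is nondegenerate. (Since the left and right radicals of a pairing between two $k$-dimensional spaces always have equal dimension, the two halves of the condition are in fact equivalent, which is why the statement lists both.) Note also that $U\cap V\subseteq U\cap V^\perp=0$, as $V$ is totally isotropic, so $U\oplus V$ is a nondegenerate $2k$-dimensional subspace. Nondegeneracy of the pairing lets me choose a basis $e_1,\dots,e_k$ of $U$ and a dual basis $e_{-1},\dots,e_{-k}$ of $V$ with $\langle e_i,e_{-j}\rangle=\delta_{ij}$; since $U$ and $V$ are totally isotropic these are $k$ mutually orthogonal hyperbolic pairs. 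By the standard theory of polar frames they extend to a full polar frame $\{e_{\pm1},\dots,e_{\pm n}\}$ of the ambient polar space, and the associated flags $F_1,F_2$ are opposite by the preceding lemma and extend $U$ and $V$; hence $U$ and $V$ are opposite.

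The only real content, and hence the main obstacle, is this last extension step: one must know that $k$ mutually orthogonal hyperbolic pairs spanning $U\oplus V$ can always be completed to a polar frame of the whole polar space. This is a form of Witt's extension theorem and is standard for all the classical forms (see \cite[Chapter 10]{BC13}, in the same circle of ideas as \cite[Theorem 10.4.6]{BC13} used in the preceding lemma); everything else is a direct unwinding of the definitions or an elementary computation in the frame, which is why the corollary can be regarded as immediate.
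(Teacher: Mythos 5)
Your proof is correct, and its first half — reducing, via \Cref{def:oppositionofflags}, the triviality of the $w_0$-action on the diagram, and the preceding lemma, to the statement that $U,V$ are opposite if and only if some polar frame has $U=\langle e_1,\dots,e_k\rangle$ and $V=\langle e_{-1},\dots,e_{-k}\rangle$, followed by the frame computation for the forward direction — is exactly what the paper intends when it calls the corollary immediate. Where you genuinely diverge is in the converse. The paper's phrase ``from the previous lemma \emph{and its proof}'' signals that the converse is meant to be run through \cite[Theorem 10.4.6]{BC13}, already invoked in that proof: extend $U$ and $V$ to \emph{arbitrary} maximal flags, place both flags simultaneously in one polar frame, so that $U=\langle e_1,\dots,e_k\rangle$ and $V=\langle e_{i_1},\dots,e_{i_k}\rangle$, and then observe that the collinearity hypothesis forces $\{i_1,\dots,i_k\}=\{-1,\dots,-k\}$ (if some $-m$ with $1\le m\le k$ were missing, the frame point $e_m$ of $U$ would be collinear with all of $V$); oppositeness then follows from the lemma. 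You instead build the frame by hand: the hypothesis says the pairing $U\times V\to\Fq$ is nondegenerate, dual bases give $k$ pairwise orthogonal hyperbolic pairs, and Witt's extension theorem completes them to a polar frame. Both routes are sound; yours is more self-contained on the linear-algebra side and makes the ``vice versa'' redundancy explicit, but it imports Witt's theorem (and, for the characteristic-two quadrics, its quadratic-form version) as an external ingredient, whereas the paper's route is synthetic, reuses the single building-theoretic fact already cited, and would apply verbatim to any polar space without choosing a form — which is precisely why the paper can dispense with a written proof.
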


In particular, generators are opposite if they are disjoint. This implies for example that the bounds in \Cref{thm:boundtypeBpartial} for $J = S \setminus \{n\}$ are bounds for EKR-sets of generators as studied in \cite{Sta80} and \cite[Theorem 9]{PSV11}. For other types of flags in polar spaces, there are very few results with the notable exceptions of \cite{IMM18} which studies flags of type $\{n-1\}$ in a polar space of rank $n$, $n$ even, and parameter $e=0$ and \cite{Metsch192} which investigates flags of type $\{2\}$ in all polar spaces. We reiterate that the results in \cite{Metsch16} for $k$-spaces in polar space of rank $n$, $1 \leq k \leq n$, are independent of ours, as the `far away' relation in that paper is defined as having empty intersection. This is not the same as being opposite.

There are two obvious examples that reach the upper bound.

\begin{theorem}\label{thm:sharpexamplestypeB}
	Let $C$ be the set of flags of type $J$ in a polar space of rank $n$ and parameter $e$ such that either
	\begin{itemize}
		\item [(i)] $1 \in J$ and every flag in $C$ has its point in a fixed generator,
		\item [(ii)] $n \in J$ and every flag in $C$ has its generators through a fixed point for $e \geq 1$ or $n$ even,
		\item [(iii)] $n \in J$ and every flag in $C$ has its generator in a fixed class for $e=0$ and $n$ odd,
	\end{itemize}
	then $C$ is an EKR-set of flags that meets the upper bound \Cref{thm:boundtypeBpartial}.
\end{theorem}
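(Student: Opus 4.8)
The plan is to verify two things for each of the three constructions: first, that $C$ is genuinely an EKR-set of flags (no two members are opposite), and second, that $|C|$ meets the bound in \Cref{thm:boundtypeBpartial} exactly. My strategy is to reduce everything to the maximal-flag case via the blow-up philosophy already used in \Cref{thm:sharpexamplestypeA}. Specifically, I would first establish sharpness for the appropriate type-$\{1\}$ or type-$\{n\}$ \emph{subspace} EKR-sets (a single-element flag), and then observe that any flag of type $J$ with $1\in J$ (resp. $n\in J$) whose point (resp. generator) lies in the prescribed configuration is obtained by blowing up such a sharp subspace-family. Since blow-up multiplies both the total count $v$ and the EKR-count $|C|$ by the same factor $|\phi_J^{-1}(\cdot)|$, equality in the bound is automatically preserved, exactly as argued in \Cref{thm:sharpexamplestypeA}.

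First I would handle the EKR property. For construction (i), using \Cref{cor:oppositiongeometricallytypeB}, two $k$-spaces are opposite only if no point of one is collinear to all points of the other. If both points of the two flags lie in a common fixed generator $\pi$, then the two points are collinear (all points of a generator are pairwise collinear), so the flags cannot be opposite; hence $C$ is an EKR-set. For constructions (ii) and (iii), I would invoke the dual statement: two generators through a fixed point $P$ both contain $P$, and $P$ is collinear to every point of each, so by \Cref{cor:oppositiongeometricallytypeB} they are non-opposite; for (iii) with $e=0$, $n$ odd, two generators in the same class are never opposite because (as noted in the remark following \Cref{lem:eigenvaluestypeB}) opposite generators in the $n$-even case lie in opposite classes, and more precisely for $n$ odd the oppositeness relation swaps the two generator-classes, so same-class generators are automatically non-opposite.

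The sharpness computation is where I would spend the real effort, and I expect it to be the main obstacle. For case (i), the relevant sharp subspace-result is the EKR-bound for points whose natural analogue must be checked against $v/(1+q^{n+e-1})$; for cases (ii)/(iii), I would appeal to the known sharp EKR-theorems for generators in polar spaces cited in the excerpt (\cite{Sta80}, \cite[Theorem 9]{PSV11}), which state that the maximal EKR-families of generators are precisely all generators through a fixed point (for $e\ge 1$ or $n$ even) or all generators in a fixed class (for $e=0$, $n$ odd), and that these meet the Delsarte-Hoffman bound. The delicate point is matching the size of the prescribed family to the denominator: I would count, for instance in case (ii), the number of generators through a fixed point and verify via \Cref{prop:numberofspacestypeB} that this equals (number of all generators)$/(1+q^{n+e-1})$, and then multiply by the blow-up factor. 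The arithmetic of these $q$-binomial products, together with keeping track of which denominator applies in the twisted small-$e$ odd-$n$ regimes, is the fiddly part; but conceptually each case is just a blow-up of an already-sharp rank-one example, so once the base cases are confirmed to meet their bounds, the general statement follows verbatim from the blow-up argument in \Cref{lemma:blow-up} and \Cref{thm:sharpexamplestypeA}.
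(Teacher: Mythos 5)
Your proposal is correct and follows essentially the same route as the paper: non-opposition via the collinearity criterion of \Cref{cor:oppositiongeometricallytypeB}, sharpness of the single-element base families by counting with \Cref{prop:numberofspacestypeB} (respectively: points in a fixed generator; generators through a fixed point, which are as numerous as the generators of a rank-$(n-1)$ polar space with the same $e$; one class of generators when $e=0$ and $n$ is odd), and transfer of equality to arbitrary types $J$ by the blow-up argument of \Cref{lemma:blow-up} and \Cref{thm:sharpexamplestypeA}. One small correction: the remark after \Cref{lem:eigenvaluestypeB} states that for $n$ even opposite generators lie in the \emph{same} class, not in opposite classes; this does not affect your case (iii), since there you only need the $n$-odd statement, which you assert correctly and which follows from two same-class generators satisfying $\dim(N_1\cap N_2)\equiv n\equiv 1 \pmod{2}$, hence never being disjoint.
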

\begin{proof}
	Combining \Cref{prop:numberofspacestypeB} and \Cref{cor:oppositiongeometricallytypeB} shows the result in the first case. In the second case, it is known that the number of generators in a polar space of rank $n$ through a fixed point equals the number of generators in a polar space of rank $n-1$, with the same parameter $e$.
	
	For the last case: the generators on a hyperbolic quadric fall into two classes of equal size \cite[Lemma 7.8.4]{BC13}, depending on their intersections: two generators $N_1,N_2$ are in the same class if and only if $\dim(N_1 \cap N_2) \equiv n \pmod{2}$. This means that when $n$ is odd, one class of generators is an EKR-set of type $\{n\}$, as they can never be disjoint.	
\end{proof}

These are not the only examples meeting the upper bound. In \cite{PSV11} it is shown that there could be very different examples, depending on the polar space. There is no construction known attaining the upper bound for type $\{n\}$ when $e = 1/2$ and $n$ odd. For $n = 3$, it is shown in \cite[Theorem 45]{PSV11} that the upper bound cannot be attained and we strongly believe this holds for all $n \geq 3$.

\begin{prob}
	Do there exist other EKR-sets of type $J$, with $1 \in J$ or $n \in J$ that meet the upper bound in \Cref{thm:boundtypeBpartial} but are not contained in the construction in \Cref{thm:sharpexamplestypeB} or by blowing up examples in \cite{PSV11}?
\end{prob}

\begin{prob}
	Do there exist EKR-sets of $k$-spaces in a polar space of rank $n$ with $2 \leq k \leq n-1$ meeting the upper bound in \Cref{thm:boundtypeBpartial}?
\end{prob}

\subsubsection{Eigenvalues of single element flags}

Before we move on to the groups of type $D_n$, we would like to indicate the strength of \Cref{algorithmpartial} by calculating all eigenvalues of oppositeness when the type is a single element, as was also done in \cite{Eisfeld99} and \cite{Vanhove11}. We emphasize that while their calculations are rather technical, ours rely in essence on elementary combinatorics of Young diagrams and \Cref{algorithmpartial}.

\begin{defin}
	A \textbf{Young diagram} of a partition $\mu = [\mu_1,\dots,\mu_k]$  is a collection of $|\mu|$ boxes, arranged in left-justified rows, with $\mu_i$ boxes in the $i$-th row.
\end{defin}

In type $B_n$, every irreducible character hence corresponds to a pair of Young diagrams with a total of $n$ boxes. We will compute all eigenvalues for $k$-dimensional subspaces in a polar space of rank $n$ and parameter $e$. The main tool we need for the first step in \Cref{algorithmpartial} is Pieri's rule for type $B_n$ \cite[6.1.9]{GP00}. Let $J = S \setminus \{s_k\}$, then

\[\mathrm{ind}^W_{W_J}(1_{W_J}) = \sum_{(\mu,\nu)}\chi_{(\mu,\nu)},\]
where $(\mu,\nu)$ runs over all pairs of partitions such that, for some $d$ with $0 \leq d \leq k$,

\begin{itemize}
	\item the Young diagram of $\mu$ can be obtained from the Young diagram of the partition $[n-k]$ by adding $d$ boxes, no two in the same column;
	\item the Young diagram of $\nu$ can be obtained from the empty Young diagram by adding $k-d$ boxes, no two in the same column.
\end{itemize}

Therefore, the only options for $(\mu,\nu)$ are for any $0 \leq d \leq k$:
\begin{align*}
	&\mu = [n-k+d-i,i], \hspace{0.5cm} 0 \leq i \leq \min(n-k,d), \\
	&\nu = [k-d].
\end{align*}

This means that 

\[\mathrm{ind}^W_{W_J}(1_{W_J}) = \sum_{d=0}^k\sum_{i=0}^{\min(n-k,d)}\chi_{([n-k+d-i,i],[k-d])},\]
which completes the first step in \Cref{algorithmpartial}. Now consider the character $\chi = \chi_{([n-k+d-i,i],[k-d])}$. In the previous sections, we have seen that the corresponding exponent can be easily computed in terms of the two partitions and equals 

\[\frac{n(n-1)}{2}+(a^*(\mu)+a^*(\nu)-a(\mu)-a(\nu))+e|\mu| = \frac{n(n-1)}{2}+{n-k+d-i \choose 2} + {i \choose 2} + {k-d \choose 2} - i + (n-k+d)e.\]

The sign on the other hand, equals $(-1)^{k-d}$. Finally, the length of the longest word in $W_J$ can be computed from \Cref{table:weylgroups} as $W_J \cong \Sym(k) \times B_{n-k}$. Recalling that the parameter $e$ plays a role in the length function in type $B_n$, we obtain $\ell =  {k \choose 2}+(n-k)(n-k-1+e)$.

\begin{prop}
	The eigenvalues of oppositeness of $k$-dimensional spaces in a polar space of rank $n$ and parameter $e$ are 
	\[(-1)^{k-d}q^{n(d+k-i)+(k-d)(k-d+i)-k(3k+1)/2+i(i-1)+de},\]
	where $0 \leq d \leq k$, $0 \leq i \leq \min(n-k,d)$.
\end{prop}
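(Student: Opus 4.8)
The plan is to assemble the four ingredients that the preceding discussion has already prepared and then invoke the final step of \Cref{algorithmpartial}. Everything needed for steps~1--3 of the algorithm is in hand: the decomposition
\[\mathrm{ind}^W_{W_J}(1_{W_J}) = \sum_{d=0}^k\sum_{i=0}^{\min(n-k,d)}\chi_{([n-k+d-i,i],[k-d])}\]
records exactly which irreducible characters contribute; for each such $\chi = \chi_{([n-k+d-i,i],[k-d])}$ the exponent of $q$ in the maximal-flag eigenvalue $\lambda_\chi$ was computed via \eqref{eq:exponenttypeB}, and its sign was read off as $(-1)^{|\nu|} = (-1)^{k-d}$; and the longest word in $W_J \cong \Sym(k) \times B_{n-k}$ has length $\ell = \tbinom{k}{2} + (n-k)(n-k-1+e)$. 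It therefore only remains to carry out step~4, that is $\mu_\chi = \lambda_\chi \cdot q^{-\ell}$, which amounts to subtracting $\ell$ from the exponent of $\lambda_\chi$ while leaving the sign $(-1)^{k-d}$ untouched.

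Concretely, I would write the partial-flag exponent as
\[\Big[\tfrac{n(n-1)}{2} + \tbinom{n-k+d-i}{2} + \tbinom{i}{2} + \tbinom{k-d}{2} - i + (n-k+d)e\Big] - \Big[\tbinom{k}{2} + (n-k)(n-k-1+e)\Big]\]
and then expand and collect terms. The $e$-dependent part is immediate, since $(n-k+d)e - (n-k)e = de$, which already matches the claimed exponent. For the remaining $e$-free part one expands the four binomial coefficients above together with $\tbinom{k}{2}$ and the product $(n-k)(n-k-1)$, and checks, monomial by monomial, that the result equals $n(d+k-i) + (k-d)(k-d+i) - \tfrac{k(3k+1)}{2} + i(i-1)$. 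Adding back the $de$ term yields the stated exponent, and the sign is inherited verbatim from $\lambda_\chi$.

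The only genuine work lies in this last identity, which is routine bookkeeping rather than a conceptual obstacle: all the mixed monomials $nk$, $nd$, $ni$, $kd$, $ki$, $di$ originate from the cross terms in the expansion of $\tbinom{n-k+d-i}{2}$, and one must verify that, after subtracting $\ell$, the coefficients of $n^2$, of the linear term in $d$, and of the linear term in $n$ all cancel. I would organise the verification by grouping the coefficient of each monomial in the variables $d,i$ and the parameters $n,k$ separately, so that these cancellations are transparent; no representation-theoretic input beyond the results already cited is needed.
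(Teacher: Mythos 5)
Your proposal is correct and follows exactly the paper's own route: the paper likewise treats the proposition as the assembly, via step~4 of \Cref{algorithmpartial}, of the Pieri-rule decomposition, the exponent formula \eqref{eq:exponenttypeB}, the sign $(-1)^{|\nu|}=(-1)^{k-d}$, and the length $\ell = \binom{k}{2}+(n-k)(n-k-1+e)$, with the remaining verification being the same elementary expansion you describe (which indeed checks out, the $n^2$, $n$, and linear-$d$ terms cancelling as you predict).
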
 

To recover the exact values in \cite[Theorem 4.3.15]{Vanhove11} (with our naming $k$ for the dimension of the space and $n$ for the rank instead of $n$ and $d$ respectively), the variable $i$ should stay the same but we make the change $d \rightarrow k+i-r$. It is a nice exercise to see that not only the exponents then become equal, but the range of the variables  also changes accordingly.

\subsection{Classical groups with Weyl group $D_n$, $n \geq 4$}

We have already seen that the generators of a hyperbolic quadric fall into two classes, depending on their mutual intersection. Considering this, we can modify the polar space of rank $n$ with parameter $e=0$, which can be considered as building of type $B_n$, to a building of type $D_n$ by making the distinction and constructing the oriflamme complex as described in \cite[Definition 7.8.5]{BC13}. Essentially, most of the incidence relation remains unchanged, but two new types $n$ and $n'$ are defined, corresponding to the two classes of generator, and the type $n-1$ is omitted. Two subspaces of type $n$ and $n'$ are incident if they intersect in a subspace of dimension $n-1$. 

For EKR-sets of flags, there is hence very little difference in considering the hyperbolic quadric as a polar space with parameter $e=0$ or by investigating its oriflamme complex. Since generators of the same type can only be opposite if $n$ is even (recall the action of $w_0$ on the diagram), this is in fact the only time when a distinction could be made. When $n$ is even, we can consider flags of type $\{n\}$ or $\{n'\}$ separately. The union of the two corresponding oppositeness graphs is then the oppositeness graph we would obtain by considering flags of type $\{n\}$ in type $B_n$.

Nevertheless, for the sake of completeness, we will run \Cref{algorithmmaximal} and \Cref{algorithmpartial} in this case as well and see that we obtain in almost all cases the same bounds as in \Cref{thm:boundtypeBmaximal} and \Cref{thm:boundtypeBpartial}

\subsubsection{The upper bound}

For this section, denote the Weyl group of type $D_n$ as $WD_n$ and that of type $B_n$ as $WB_n$. Then $WD_n$ is the index two subgroup generated by $\{s_1,\dots,s_{n-1},u:=ts_{n-1}t\}$ in $WB_n$. One can indeed check that $us_{n-1} = s_{n-1}u$, $us_{n-2}u = s_2us_{n-2}$ and $us_i = s_iu$ for $1 \leq i \leq n-3$. Equivalently, from the permutation group definition of $WB_n$, it is the subgroup with an even number of sign flips. Its corresponding Iwahori-Hecke algebra is a subalgebra of the Iwahori-Hecke algebra of type $B_n$ with structure constants $q_s = q$ and $q_t = 1$ \cite[Section 10.4]{GP00}. This again makes sense in the geometrical side of the picture, considering the hyperbolic quadric as a polar space with parameter $e=0$.

It follows that the irreducible characters of $WD_n$, and hence the eigenvalues of oppositeness, follow almost directly from the discussion in the previous section. The irreducible characters of $WB_n$ are indexed by pairs of partitions $(\mu,\nu)$ and two possibilities arise \cite[Section 5.6]{GP00}:

\begin{itemize}
	\item $\mu \neq \nu$ and the restriction of $\chi_{(\mu,\nu)}$ to $WD_n$ remains irreducible and equals the restriction of $\chi_{(\nu,\mu)}$ or,
	\item $\mu = \nu$ and the restriction of $\chi_{(\mu,\mu)}$ is the sum of two irreducible characters, denoted as $\chi_{(\mu,+)}$ and $\chi_{(\mu,-)}$.
\end{itemize}

Therefore, the restriction of $\chi_{([n],\emptyset)}$ and $\chi_{(\emptyset,[n])}$ coincide so that we immediately find the following results.

\begin{lemma}
	The largest eigenvalue is obtained for $\chi_{(\mu,\nu)} = \chi_{([n],\emptyset)}$ and equals $q^{n(n-1)}$. The smallest eigenvalue is obtained for $\chi_{(\mu,\nu)} = \chi_{([n-1],[1])}$ and equals $-q^{(n-1)^2}$. 
\end{lemma}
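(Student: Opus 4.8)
The plan is to reduce everything to the type $B_n$ computation of \Cref{lem:eigenvaluestypeB}, specialised to $e=0$, using that the Iwahori-Hecke algebra of $WD_n$ sits inside that of $WB_n$ with structure constants $q_s=q$ and $q_t=1$. First I would record that $WD_n$ has a single conjugacy class of reflections, of size $n(n-1)$ (the roots $e_i\pm e_j$, $i<j$, form one orbit for $n\geq 4$), so that \Cref{algorithmmaximal} uses a single representative $s=s_1$ with $q_s=q$. For an irreducible character $\chi$ of $WD_n$ obtained by restricting $\chi_{(\mu,\nu)}$ with $\mu\neq\nu$, the ratio $\chi(s)/\chi(1)$ is unchanged by restriction and is computed by the $B_n$ formula $\omega_{(\mu,\nu)}(\widehat{s})=2(a^*(\mu)+a^*(\nu)-a(\mu)-a(\nu))$ together with $|s^{WB_n}|=n(n-1)$. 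Substituting into $e_s=|s^{WD_n}|(1+\chi(s)/\chi(1))$ and using $|s^{WD_n}|=n(n-1)$ as well, the magnitude exponent $e_s/2$ becomes exactly the expression \eqref{eq:exponenttypeB} with $e=0$. Thus the search for the extreme eigenvalues is the same optimisation over pairs of partitions as in type $B_n$, and I may invoke \Cref{lemma:partitionmaxvalue} essentially verbatim.

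For the largest eigenvalue, the trivial character $\chi_{([n],\emptyset)}$ maximises the exponent, yielding $q^{n(n-1)}$; crucially, $\chi_{(\emptyset,[n])}$ restricts to the \emph{same} (trivial) character of $WD_n$, so it contributes the same positive eigenvalue rather than a negative one. This coincidence is exactly what separates the $D_n$ case from $B_n$: in \Cref{lem:eigenvaluestypeB} the smallest eigenvalue for $e=0$ and $n$ odd came from $\chi_{(\emptyset,[n])}$ with sign $(-1)^n$, but that character is no longer available as a negative contributor here. Repeating the maximisation of \eqref{eq:exponenttypeB} over those characters that can carry a negative sign then singles out $\chi_{([n-1],[1])}$, the reflection character, with exponent $(n-1)^2$; since the only strictly larger exponent is $n(n-1)$, occurring only for the trivial character, this yields the smallest eigenvalue $-q^{(n-1)^2}$ for both parities.

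The delicate point, and the main obstacle, is the sign of $\chi_{([n-1],[1])}$ at the longest word $w_0$ of $WD_n$, since $w_0$ is central only when $n$ is even. For $n$ even, $w_0=-\mathrm{id}$ acts as $-1$ on the reflection module $\C^n$, so $\chi_{([n-1],[1])}(w_0)=-\chi_{([n-1],[1])}(1)$ and \Cref{algorithmmaximal} returns the definite sign $-$. For $n$ odd, $w_0$ induces the order-two diagram automorphism and acts on $\C^n$ as $\mathrm{diag}(-1,\dots,-1,1)$, so $\chi_{([n-1],[1])}(w_0)=-(n-2)$, whose absolute value differs from $\chi_{([n-1],[1])}(1)=n$; here \Cref{algorithmmaximal} returns $\pm$, i.e. both $\pm q^{(n-1)^2}$ occur on this isotypic component, as $A_{w_0}$ is symmetric with $A_{w_0}^2=q^{2(n-1)^2}\cdot\mathrm{Id}$ there by Springer's theorem. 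In particular $-q^{(n-1)^2}$ is present regardless of parity, and since no negative eigenvalue of larger modulus exists, it is the smallest, completing the argument.
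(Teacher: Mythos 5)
Your proposal is correct and follows essentially the same route as the paper: reduce to the type $B_n$ exponent computation with $e=0$ via restriction of characters (noting that $\chi_{([n],\emptyset)}$ and $\chi_{(\emptyset,[n])}$ restrict to the same trivial character, which removes the negative contributor that caused the odd-$n$ exception in \Cref{lem:eigenvaluestypeB}), and then settle the sign of the reflection character at $w_0$ by parity, computing $\chi_{([n-1],[1])}(tw_0^B)=-n+2$ for $n$ odd so that both signs $\pm q^{(n-1)^2}$ occur. Your write-up is somewhat more explicit than the paper's (single reflection class in $WD_n$, carry-over of the exponent formula), but the key steps and computations coincide.
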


\begin{proof}
	When $n$ is even, the longest word in $WB_n$ contains an even number of $t$ in any word in the generators, so it belongs to $WD_n$. In this case, everything follows from \Cref{lem:eigenvaluestypeB}.
	
	However, when $n$ is odd, the longest word in $WD_n$ is $tw^B_0$, where $w^B_0$ is the longest word in $WB_n$. In the standard reflection representation, the image of $t$ is the diagonal matrix $\mathrm{diag}(1,\dots,1,-1)$, and the image of $w^B_0$ is $\mathrm{diag}(-1,\dots,-1)$. It follows that $\chi_{([n-1],[1])}(tw^B_0) = -n+2$ and hence the corresponding eigenvalue appears with both signs.
\end{proof}

The number of subspaces of type $k$ in a hyperbolic quadric is given by the same formula as in \Cref{prop:numberofspacestypeB}, with $e=0$, except for $k = n,n'$, when it is half of the given number. In summary, this gives the following result.

\begin{theorem}\label{thm:boundtypeDmaximal}
	If $C$ is an EKR-set of maximal flags in a hyperbolic quadric then
	\[|C| \leq \frac{\prod_{i=1}^{n-1}(q^{i}+1)\qbinom{n}{1}\dots \qbinom{1}{1}}{1+q^{n-1}} = \prod_{i=1}^{n-2}(q^{i}+1)\qbinom{n}{1}\dots \qbinom{1}{1}.\]
\end{theorem}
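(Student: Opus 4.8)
The plan is to run exactly the same three-step template that delivered \Cref{thm:boundtypeAmaximal} and \Cref{thm:boundtypeBmaximal}. The opposition graph on maximal flags is regular, since a group acts transitively on it, so \Cref{prop:DelsarteHoffman} applies and yields $|C| \le v/(1 - k/\alpha)$, where $v$ is the number of maximal flags of the hyperbolic quadric, $k$ is the valency (equivalently the largest eigenvalue), and $\alpha$ is the smallest eigenvalue. For the last two I would simply quote the lemma immediately preceding the statement, which was obtained by feeding the $D_n$ character data and the structure constants $q_s = q$, $q_t = 1$ into \Cref{algorithmmaximal}: there $k = q^{n(n-1)}$ (trivial character $\chi_{([n],\emptyset)}$) and $\alpha = -q^{(n-1)^2}$ (character $\chi_{([n-1],[1])}$).

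First I would evaluate the Delsarte--Hoffman denominator. Since $k/\alpha = q^{n(n-1)}/(-q^{(n-1)^2}) = -q^{n(n-1)-(n-1)^2}$ and the exponent factors as $(n-1)\bigl(n-(n-1)\bigr) = n-1$, we get $1 - k/\alpha = 1 + q^{n-1}$, precisely the denominator in the statement. This is the same sign-free arithmetic that occurs in the $e=0$ instance of \Cref{thm:boundtypeBmaximal}: when $n$ is even $w_0$ lies in $WD_n$ and the eigenvalues are inherited directly from type $B_n$, while when $n$ is odd the longest word is $tw_0^B$ and the lemma records that $-q^{(n-1)^2}$ still occurs, so $\alpha$ is unchanged either way.

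Next I would pin down $v$. Using that the number of subspaces of each type matches the $e=0$ specialization of \Cref{prop:numberofspacestypeB}, except that the two top oriflamme types $n$ and $n'$ each carry half as many subspaces, I would assemble the count of maximal flags as a product of Gaussian-binomial factors (the flags inside a fixed generator) times the generator factors. This reproduces the enumerator $\prod_{i=1}^{n-1}(q^i+1)\qbinom{n}{1}\cdots\qbinom{1}{1}$; the halving at the top exactly removes the factor $q^0+1 = 2$ that appears in the $e=0$ polar-space count of type $B_n$. Dividing by $1+q^{n-1}$ and cancelling the $i = n-1$ term, namely $q^{n-1}+1$, from $\prod_{i=1}^{n-1}(q^i+1)$ yields the closed form $\prod_{i=1}^{n-2}(q^i+1)\qbinom{n}{1}\cdots\qbinom{1}{1}$, which finishes the argument.

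The only genuinely delicate point is the flag count $v$, because of the oriflamme convention in which type $n-1$ is suppressed and replaced by the two generator classes $n, n'$: a maximal flag now terminates in an incident pair of opposite-class generators rather than in a single chain. I would settle the factor-of-two bookkeeping by noting that such a pair $(G, G')$ determines its meet $G \cap G'$ as the missing type-$(n-1)$ space uniquely, whereas a type-$B_n$ maximal flag additionally records a \emph{choice} of one of the two generators through that space; hence the oriflamme count is exactly half the $e=0$ type-$B_n$ count. Everything beyond this is the same routine manipulation already carried out in the two preceding theorems.
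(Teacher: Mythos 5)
Your proposal is correct and takes essentially the same route as the paper: the paper likewise feeds the eigenvalues $q^{n(n-1)}$ and $-q^{(n-1)^2}$ from the preceding lemma into the Delsarte--Hoffman bound of \Cref{prop:DelsarteHoffman}, so the denominator is $1+q^{n(n-1)-(n-1)^2}=1+q^{n-1}$, and obtains the numerator from the $e=0$ polar-space count with the factor $q^0+1=2$ removed. Your explicit two-to-one correspondence between type-$B_n$ maximal flags and oriflamme flags (via the meet $G\cap G'$ recovering the suppressed type-$(n-1)$ space) is exactly the bookkeeping the paper leaves implicit.
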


Recall that for partial flags, we need to take care of the parity of $n$, as this has an effect on $w_0$ and its action on $S$. When $n$ is even, the action is trivial as $w_0 = w_0^B$ is central, and oppositeness is defined for any flag of type $J$, $J \subseteq S$. On the other hand, when $n$ is odd, the end nodes $s_n$ and $u$ are switched under the action of $w_0 = tw_0^B$ so that self-opposite flags either contain no $n$-dimensional subspaces or contain both classes of $n$-dimensional subspaces. 

\begin{theorem}\label{thm:boundtypeDpartial}
	If $C$ is an EKR-set of flags of cotype $J$ in the oriflamme complex of a hyperbolic quadric, such that $J^{w_0} = J$, then
	\[|C| \leq \frac{v}{1+q^{n-1}},\]
	where $v = [G:P_J]$ is the total number of flags of cotype $J$.
\end{theorem}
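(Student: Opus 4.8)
The plan is to reuse the route of \Cref{thm:boundtypeApartial} and \Cref{thm:boundtypeBpartial}. By \Cref{prop:DelsarteHoffman} the bound for an EKR-set of flags of cotype $J$ is governed by $1-k/\alpha$, where $k$ is the valency and $\alpha$ the smallest eigenvalue of the adjacency matrix $\widetilde{A}_{w_0}$ of oppositeness on flags of cotype $J$. By \Cref{algorithmpartial} every eigenvalue of $\widetilde{A}_{w_0}$ has the form $\mu_\chi=\lambda_\chi\,q^{-\ell}$, where $\ell$ is the length of the longest word in $W_J$ and $\chi$ runs over the irreducible constituents of $\mathrm{ind}^{WD_n}_{W_J}(1_{W_J})$. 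As the factor $q^{-\ell}$ is common to all of these, the ratio $k/\alpha$ coincides with the maximal-flag value of \Cref{thm:boundtypeDmaximal} as soon as the two characters realizing the extremal eigenvalues both appear in this induced character. By the preceding lemma these are the trivial character $\chi_{([n],\emptyset)}$, giving the valency $q^{n(n-1)}q^{-\ell}$, and the reflection character $\chi_{([n-1],[1])}$, giving the smallest eigenvalue $-q^{(n-1)^2}q^{-\ell}$. The task thus reduces to showing that both survive restriction to $W_J$ for every $J$ with $J^{w_0}=J$.

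For the trivial character this is immediate: Frobenius reciprocity gives multiplicity $\langle 1_{W_J},1_{W_J}\rangle_{W_J}=1$. For the reflection character I would repeat the module argument of \Cref{thm:boundtypeBpartial}. Every parabolic subgroup $W_J$ of $WD_n$ is a direct product of irreducible Weyl groups, now of types $A$ and $D$, and it acts on the module $\C^n$ of the standard reflection representation. This module decomposes as the orthogonal sum of the reflection modules of the factors together with the fixed space $U$ of $W_J$, and $\dim U=n-|J|>0$ because $J\subsetneq S$. Hence $1_{W_J}$ occurs in $\chi_{([n-1],[1])}|_{W_J}$, and Frobenius reciprocity places $\chi_{([n-1],[1])}$ in the induced character with nonzero multiplicity. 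One should note that $\chi_{([n-1],[1])}$ restricts irreducibly from $WB_n$ to $WD_n$, since $[n-1]\neq[1]$ for $n\ge 4$, so it is a genuine $D_n$-character carrying the eigenvalue computed above.

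The parity of $n$ is where I expect the real difficulty. When $n$ is even, $w_0$ is central, each eigenvalue has a single sign, and the argument closes exactly as in type $B$: the reflection character contributes $-q^{(n-1)^2}q^{-\ell}$, whence $1-k/\alpha=1+q^{n-1}$, and \Cref{prop:DelsarteHoffman} with $v=[G:P_J]$ gives the claim while also showing that the direct bound on partial flags yields no improvement. When $n$ is odd, $w_0=t\,w_0^{B}$ is not central, it swaps the fork nodes $s_n$ and $u$, and the hypothesis $J^{w_0}=J$ is exactly what makes opposition well-defined. Here the sign is genuinely subtle: as in the preceding lemma the reflection character carries both signs on maximal flags, but if its multiplicity in $\mathrm{ind}^{WD_n}_{W_J}(1_{W_J})$ happens to be one, then $\widetilde{A}_{w_0}$ acts on that isotypic line by a single scalar whose sign need not be negative. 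This is precisely the obstruction anticipated after \Cref{algorithmpartial}. Rather than force a delicate sign computation, I would secure the bound for all $n$ by blow-up: by \Cref{lemma:blow-up} the preimage $\phi_J^{-1}(C)$ is an EKR-set of maximal flags, and since $[G:B]=[G:P_J]\,[P_J:B]$ and $|\phi_J^{-1}(C)|=|C|\,[P_J:B]$, \Cref{thm:boundtypeDmaximal} yields $|C|\le v/(1+q^{n-1})$ with no appeal to the partial-flag spectrum. This proves the theorem uniformly, the character argument then recording that for even $n$ the direct Delsarte--Hoffman bound is no better.
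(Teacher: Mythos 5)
Your proof is correct, but it does not follow the paper's route in full. The paper's own proof of \Cref{thm:boundtypeDpartial} consists of exactly your first two paragraphs and nothing more: it shows that the trivial character and the reflection character $\chi_{([n-1],[1])}$ both appear in $\mathrm{ind}_{W_J}^{W}(1_{W_J})$ (by the same fixed-space argument, $\dim U = n - |J| > 0$), and concludes for all $n$ at once, with no parity distinction and no blow-up. Your odd-$n$ detour is the genuine difference, and the concern behind it is legitimate: the paper itself warns, right after \Cref{algorithmmaximal}, that the signs of partial-flag eigenvalues are delicate precisely when $w_0$ is not central (type $D_n$, $n$ odd), yet its proof of \Cref{thm:boundtypeDpartial} never revisits this point---mere survival of $\chi_{([n-1],[1])}$ in the induced character does not by itself certify that the \emph{negative} value $-q^{(n-1)^2-\ell}$ occurs in the partial-flag spectrum. (A small correction: the multiplicity of the reflection character is $n-|J|$, not necessarily $1$, but your worry persists, e.g.\ for maximal parabolics.) Your fallback via \Cref{lemma:blow-up}, \Cref{thm:boundtypeDmaximal} and $|\phi_J^{-1}(C)| = |C|\,[P_J:B]$ is exactly the mechanism the paper sets up before \Cref{prop:DelsarteHoffman}, and it proves the stated bound uniformly and sign-free; what it does not recover is the purpose of the character computation, namely that applying Delsarte--Hoffman directly to the partial-flag graph yields no improvement over the blow-up bound. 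For completeness, note that the sign issue cannot actually invalidate the direct route either: by \Cref{rem:partialflagslosefactor} every eigenvalue of $q^{\ell}\widetilde{A}_{w_0}$ is an eigenvalue of $A_{w_0}$, and in type $D_n$ all of the latter except the valency have absolute value at most $q^{(n-1)^2}$; hence the smallest partial-flag eigenvalue is at least $-q^{(n-1)^2-\ell}$, so Delsarte--Hoffman gives a bound at least as strong as the one claimed, the unresolved sign affecting only whether the denominator is exactly $1+q^{n-1}$ or better.
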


\begin{proof}
	The trivial character and the character of the standard reflection representation both appear in $\mathrm{ind}(1_{W_J}^W)$ for any choice of nonempty $J \subseteq S$. The former as the trivial character restricts to the trivial character, the latter by the same reasoning as before, or by \cite[Remark 6.3.7]{GP00}. 
\end{proof}

%

\subsubsection{Reaching the upper bound}

A maximal flag in type $D_n$ contains two $n$-dimensional spaces. For every maximal flag, we can find a polar frame $\{e_{-n},\dots,e_{n}\}$ in the hyperbolic quadric as before, such that the maximal flag is given by $\{\langle e_1 \rangle, \dots, \langle e_1,\dots,e_{n-2} \rangle, \langle e_1,\dots,e_{n-1},e_n \rangle, \langle e_1,\dots,e_{n-1},e_{-n} \rangle \}$. The two generators $\langle e_1,\dots,e_{n-1}, e_n \rangle$ and $\langle e_1,\dots,e_{n-1},e_{-n} \rangle$ are the only generators incident with the $(n-1)$-dimensional space $\langle e_1,\dots,e_{n-1} \rangle$ in the hyperbolic quadric. Using this, we can see that the group-theoretical notion of opposition corresponds to the usual geometrical notion in polar spaces, as in type $B_n$.

\begin{lemma}\label{lem:oppositiongeometricallytypeD}
	Two maximal flags $F_1$ and $F_2$ are opposite if and only if 
	\begin{align*}
	F_1 &= \{\langle e_1 \rangle, \dots, \langle e_1,\dots,e_{n-2} \rangle, \langle e_1,\dots,e_{n-1},e_n \rangle, \langle e_1,\dots,e_{n-1},e_{-n} \rangle \}  \\
	F_2 &= \{\langle e_{-1} \rangle, \dots, \langle e_{-1},\dots,e_{-(n-2)} \rangle, \langle e_{-1},\dots,e_{-(n-1)},e_{(-1)^{n-1}n} \rangle, \langle e_{-1},\dots,e_{-(n-1)},e_{(-1)^{n}n} \rangle\}
	\end{align*} 
	for some polar frame $\{e_{-n},\dots,e_{-1},e_1,\dots,e_n\}$.
\end{lemma}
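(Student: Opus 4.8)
The plan is to mimic the proof of the corresponding statement in type $B_n$: reduce the global oppositeness condition to a computation inside a single apartment, and then make the parity of $n$ visible by computing how $w_0$ acts on the two top generators. The honest content lives in the reduction step, since a maximal flag of the oriflamme complex carries two top generators and omits the $(n-1)$-space. A maximal oriflamme flag consists of subspaces of types $1,\dots,n-2$ together with the two generators $N,N'$ through the $(n-1)$-space $N\cap N'$; forgetting the class distinction and keeping, say, $N$ yields an ordinary maximal flag of the underlying polar space. Applying \cite[Theorem 10.4.6]{BC13} to the polar flags associated to $F_1$ and $F_2$ in this way produces a common polar frame $\{e_{-n},\dots,e_{-1},e_1,\dots,e_n\}$; since the whole oriflamme apartment, and in particular both generators through any of its $(n-1)$-spaces, is determined by the frame, both $F_1$ and $F_2$ are constructed from it. After a change of frame I may assume $F_1$ is the standard flag in the statement, with $N=\langle e_1,\dots,e_{n-1},e_n\rangle$ and $N'=\langle e_1,\dots,e_{n-1},e_{-n}\rangle$ the two generators through $\langle e_1,\dots,e_{n-1}\rangle$.

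Next I would identify the frame flags with $WD_n$. The maximal flags of the oriflamme complex constructible from a fixed frame are exactly the orbit $\{w\cdot F_1 \mid w\in WD_n\}$ under $w\cdot e_i=e_{w(i)}$, and a direct count gives $n\cdot 2^{n-1}\cdot(n-1)!=2^{n-1}n!=|WD_n|$ of them, so $w\mapsto w\cdot F_1$ is a bijection and every double coset $BwB$ has a representative among these flags. Identifying $F_1$ with its stabilizer $B$, the flag $w\cdot F_1$ corresponds to the coset $wB$, hence $(F_1,w\cdot F_1)\in R_w$. Therefore $F_2=w\cdot F_1$ is opposite to $F_1$ if and only if $w=w_0$, and the whole lemma reduces to evaluating $w_0\cdot F_1$.

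Finally I would substitute the two expressions for $w_0$ recorded earlier in this subsection. For $n$ even, $w_0=w_0^{B}=(1,-1)\cdots(n,-n)$ sends every $e_i\mapsto e_{-i}$; for $n$ odd, $w_0=t\,w_0^{B}=(1,-1)\cdots(n-1,-(n-1))$ fixes $\pm n$ and sends $e_i\mapsto e_{-i}$ for $i\le n-1$. In both cases the chain $\langle e_1\rangle\subset\cdots\subset\langle e_1,\dots,e_{n-2}\rangle$ maps to $\langle e_{-1}\rangle\subset\cdots\subset\langle e_{-1},\dots,e_{-(n-2)}\rangle$, while the ordered pair of top generators $(N,N')$ is swapped precisely when $n$ is even and fixed when $n$ is odd. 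This swap is exactly what the indices $(-1)^{n-1}n$ and $(-1)^{n}n$ encode, so $w_0\cdot F_1$ is the displayed flag $F_2$.

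I expect the main obstacle to be the first step rather than the final substitution. Unlike type $B_n$, one must apply the common-apartment result to the oriflamme complex and verify that the two generators of each flag are recovered from the frame, and one must check that $WD_n$, the even-sign-flip subgroup, permutes the two generator classes exactly as the parity of $n$ dictates, so that the bijection with $WD_n$ and the count $2^{n-1}n!$ are legitimate. Once the apartment and this bijection are in place, the identification of oppositeness with $w=w_0$ is immediate and the computation of $w_0\cdot F_1$ is a one-line substitution.
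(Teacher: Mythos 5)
Your proposal is correct and takes essentially the same route as the paper: the paper's own proof just invokes the type $B_n$ argument (common polar frame via \cite[Theorem 10.4.6]{BC13}, bijection between the flags built from a frame and the Weyl group so that every double coset $BwB$ has a representative there, then evaluation of $w_0 \cdot F_1$), noting only that $w_0$ now depends on the parity of $n$, which is exactly your plan. The extra details you supply --- recovering both generators of an oriflamme flag from the frame and the count $2^{n-1}n! = |WD_n|$ giving simple transitivity of $WD_n$ on the apartment's chambers --- are precisely the points the paper leaves implicit in its appeal to the $B_n$ case, and your parity computation of $w_0\cdot F_1$ matches the paper's.
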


\begin{proof}
	This follows in a similar way as \Cref{cor:oppositiongeometricallytypeB}, with the exception that the action of $w_0$ depends on the parity of $n$. When $n$ is even, we know that $w_0 = (1,-1)\dots(n,-n)$ so that $w_0 \cdot e_n = e_{-n}$ and vice versa. When $n$ is odd, we have seen that $w_0 = (1,-1)\dots(n-1,-(n-1))$ and the result follows.
\end{proof}

\begin{cor}\label{cor:oppositiongeneratorstypeD}
	Two generators are opposite if and only if they are disjoint and this can only occur in the following cases: 
	\begin{itemize}
		\item $n$ is even and they are of the same type,
		\item $n$ is odd and they are of different type.
	\end{itemize}
\end{cor}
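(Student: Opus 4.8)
The plan is to deduce the corollary from \Cref{lem:oppositiongeometricallytypeD} together with the classification of the two classes of generators by the parity of their intersection dimension. For the forward implication, suppose two generators $G$ and $G'$ are opposite. By \Cref{def:oppositionofflags} they extend to opposite maximal flags, which by \Cref{lem:oppositiongeometricallytypeD} have the explicit frame form $F_1,F_2$ recorded there. The generators occurring in $F_1$ are $\langle e_1,\dots,e_{n-1},e_n\rangle$ and $\langle e_1,\dots,e_{n-1},e_{-n}\rangle$, and those in $F_2$ are $\langle e_{-1},\dots,e_{-(n-1)},e_{(-1)^{n-1}n}\rangle$ and $\langle e_{-1},\dots,e_{-(n-1)},e_{(-1)^{n}n}\rangle$. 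Since $G$ and $G'$ sit in the positions matched by the action of $w_0$ on the two end nodes, a direct inspection of which basis vectors occur shows that the matched pair is always spanned by disjoint subsets of $\{e_{-n},\dots,e_n\}$, whence $G\cap G'=\{0\}$. This proves that opposite generators are disjoint.

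For the reverse implication I would argue by transitivity, exactly as in the proof of \Cref{lem:oppositiongeometricallytypeA}: the stabiliser of a fixed generator $G$ acts transitively on the generators of the prescribed class that are disjoint from $G$. As the matched generator produced above is one such disjoint generator and is opposite to $G$, and opposition is invariant under the $G$-action, every disjoint generator of that class is opposite to $G$. Combined with the forward implication this yields the equivalence \emph{opposite $\iff$ disjoint}.

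It remains to determine when disjoint generators of the required relative type exist, which is where the parity of $n$ enters. Here I would invoke the characterisation of the two classes recalled in the proof of \Cref{thm:sharpexamplestypeB} (from \cite[Lemma 7.8.4]{BC13}): two generators $N_1,N_2$ lie in the same class if and only if $\dim(N_1\cap N_2)\equiv n\pmod 2$. Disjointness means $\dim(N_1\cap N_2)=0\equiv 0\pmod 2$, so disjoint generators are of the same class precisely when $n$ is even and of different class precisely when $n$ is odd. This matches the behaviour of $w_0$ on the Dynkin diagram (fixing the two end nodes for $n$ even, swapping them for $n$ odd), hence reproduces the two cases in the statement.

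The step I expect to be the main obstacle is the reverse implication: one must justify that the generator-stabiliser is transitive on the disjoint generators of a fixed class and, crucially, that this single orbit is exactly the set of opposite generators rather than a proper subset. This is geometrically standard for the classical polar space underlying the hyperbolic quadric, but it is the one point where one genuinely leaves the combinatorics of the Weyl group and appeals to the group action on the quadric.
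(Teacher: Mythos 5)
Your proposal is correct and takes essentially the same approach as the paper: the corollary is read off from \Cref{lem:oppositiongeometricallytypeD} (opposite maximal flags in frame form, whose $w_0$-matched generators are spanned by disjoint parts of the polar frame) combined with the class criterion $\dim(N_1\cap N_2)\equiv n \pmod 2$ of \cite[Lemma 7.8.4]{BC13}, which settles the parity cases. Your converse via transitivity of the generator stabiliser on disjoint generators is the same device the paper itself employs in the proof of \Cref{lem:oppositiongeometricallytypeA}, so nothing in your argument genuinely departs from the paper's route.
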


Using these properties, we can find some simple examples of EKR-sets meeting the bound in \Cref{thm:boundtypeDpartial}.

\begin{theorem}\label{lem:sharpexamplestypeD}
	Let $C$ be the set of flags of type $J = J^{w_0}$ in the oriflamme complex of a hyperbolic quadric such that
	\begin{itemize}
		\item [(i)] $1 \in J$ and every flag in $C$ has its point in a fixed generator,		
		\item [(ii)] $n \in J$ (resp. $n' \in J$), $n$ is even, and every flag in $C$ has its subspace of type $n$ (resp. $n'$) through a fixed point,
		\item [(iii)] $n \in J$ (resp. $n' \in J$), $n=4$, and every flag in $C$ has its subspace of type $n$ incident with a fixed subspace of type $n'$ (resp. $n$). 
	\end{itemize}
	then $C$ is an EKR-set of flags that meets the upper bound \Cref{thm:boundtypeDpartial}.
\end{theorem}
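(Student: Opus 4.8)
The plan is to follow the two-step strategy used in the proofs of \Cref{thm:sharpexamplestypeA} and \Cref{thm:sharpexamplestypeB}: first verify that each $C$ is an EKR-set, and then check that its cardinality meets the bound by recognising $C$ as a blow-up of an extremal set of single-element flags.

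First I would establish the EKR property. By \Cref{lemma:oppositenesslocally}, two flags are opposite only if their constituents of corresponding type are elementwise opposite, so it suffices to exhibit, in each case, one pair of constituents that can never be opposite. In case (i) the point of every flag lies in a fixed generator $\pi$; since all points of a generator are pairwise collinear, the points of two flags in $C$ are collinear, hence not opposite by the polar-space collinearity criterion (the specialisation of \Cref{cor:oppositiongeometricallytypeB} to points, which is unaffected by passing to the oriflamme complex). In case (ii), with $n$ even, the type-$n$ subspace of every flag passes through a fixed point $P$, so any two such generators share $P$ and are in particular not disjoint; by \Cref{cor:oppositiongeneratorstypeD} they are therefore not opposite. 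In case (iii), with $n=4$, each flag has its type-$n$ generator $N$ meeting the fixed type-$n'$ generator $M$ in an $(n-1)$-space; for two flags the two $3$-spaces $N_1\cap M$ and $N_2\cap M$ lie in the $4$-space $M$ and hence meet in dimension at least $2$, so $N_1\cap N_2\neq\emptyset$ and the generators are not disjoint, again not opposite by \Cref{cor:oppositiongeneratorstypeD}. In every case $C$ contains no opposite pair.

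Next I would argue that $C$ meets the bound. In each case $C$ is precisely the preimage of an extremal set $C_0$ of single-element flags (points in case (i), type-$n$ generators in cases (ii) and (iii)) under the projection forgetting all but the distinguished subspace. Exactly as in \Cref{thm:sharpexamplestypeA}, this blow-up multiplies both $|C_0|$ and the total number of single-element objects by the same factor, namely the constant number of ways to complete the distinguished subspace to a flag of type $J$, so equality in the Delsarte--Hoffman bound is preserved provided $C_0$ is extremal. It therefore remains to verify the three base cases using \Cref{prop:numberofspacestypeB} with $e=0$. In case (i) a fixed generator contains $\qbinom{n}{1}$ points while the quadric has $\qbinom{n}{1}(q^{n-1}+1)$ points, giving the ratio $1/(1+q^{n-1})$. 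In case (ii) the type-$n$ generators through $P$ correspond bijectively to one class of generators in the residual rank-$(n-1)$ hyperbolic quadric, giving $\prod_{i=1}^{n-2}(q^i+1)$ of them against a total of $\prod_{i=1}^{n-1}(q^i+1)$, again the ratio $1/(1+q^{n-1})$. In case (iii) each $3$-space of $M$ lies in a unique generator of the class opposite to $M$, so the type-$n$ generators incident with $M$ are counted by the hyperplanes of $M\cong\PG(3,q)$, namely $\qbinom{4}{3}=(q+1)(q^2+1)$ of them against $\prod_{i=1}^{3}(q^i+1)$, once more the ratio $1/(1+q^3)$.

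The main obstacle I anticipate is the bookkeeping of the two classes of generators inside the oriflamme complex. In case (ii) one must check that the residue map sends the type-$n$ generators through $P$ to exactly one of the two classes of the rank-$(n-1)$ residue, rather than to a mixture, and in case (iii) one must invoke the precise oriflamme incidence, namely that an $(n-1)$-space lies in exactly two generators, one of each class, to conclude that every $3$-subspace of $M$ determines a unique incident type-$n$ generator. Once these class-counting facts are pinned down, the remaining arithmetic is the same elementary $q$-binomial manipulation already carried out for types $A_n$ and $B_n$.
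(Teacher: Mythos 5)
Your proposal is correct, and for cases (i) and (ii) it is essentially the paper's own (very terse) argument: the EKR property via the elementwise criterion of \Cref{lemma:oppositenesslocally} together with \Cref{cor:oppositiongeneratorstypeD}, and equality via \Cref{prop:numberofspacestypeB} combined with the blow-up/fiber argument of \Cref{thm:sharpexamplestypeA}. The genuine divergence is case (iii): the paper deduces it from case (ii) in one line using the triality automorphism of $D_4$, i.e.\ the order-$3$ diagram automorphism, which transports the extremal configuration ``type-$n$ generators through a fixed point'' to ``type-$n$ generators incident with a fixed type-$n'$ generator'' together with opposition and all cardinalities. You instead argue directly: the type-$n$ generators incident with $M$ biject with the $\qbinom{4}{3}=(q+1)(q^2+1)$ hyperplanes of $M$ (each totally singular $3$-space lies on exactly $q^e+1=2$ generators, necessarily one of each class since $\dim(N_1\cap N_2)=3\not\equiv 4 \pmod 2$), and any two such generators meet $M$ in hyperplanes whose intersection has dimension at least $2$, so they are never disjoint and hence never opposite by \Cref{cor:oppositiongeneratorstypeD}. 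Both routes are valid: triality is shorter but requires accepting that it is an automorphism of the geometry preserving opposition, whereas your count is self-contained and stays at the level of elementary polar-space combinatorics, at the price of redoing for case (iii) what triality gives for free. Finally, the class-bookkeeping issue you flag in case (ii) does resolve affirmatively: passing to the residue of $P$ lowers every intersection dimension by exactly one, so the parity criterion $\dim(N_1\cap N_2)\equiv n \pmod 2$ for membership in a common class is preserved, and the type-$n$ generators through $P$ fill exactly one class of the rank-$(n-1)$ residue, yielding the count $\prod_{i=1}^{n-2}(q^i+1)$ as you claim.
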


\begin{proof}
	The first case is seen quite directly. For the second case, this follows from \Cref{cor:oppositiongeneratorstypeD} and \Cref{prop:numberofspacestypeB}. The third case follows from the second by the triality automorphism of $D_4$, which corresponds to the unique diagram automorphism of order 3.
\end{proof}

Flags of type $\{n,n'\}$, $n$ even, were investigated in \cite{IMM18} where it was shown that the examples described above are the only ones attaining equality.

\begin{prob}
	Do there exist other EKR-sets of type $J$, with $1 \in J$ or $n \in J$ that meet the upper bound in \Cref{thm:boundtypeDpartial} but are not contained in the construction in \Cref{lem:sharpexamplestypeD}?
\end{prob}

\subsection{Conclusion and further research}

Now that we have managed to obtain upper bounds for EKR-sets of flags, the next step would be to investigate the case of equality. Often, the multiplicity of the smallest eigenvalue is a useful piece of information, which we have not mentioned so far. To compute this, a little more effort and understanding of Iwahori-Hecke algebras is required, but the key term is that of `generic degrees', see for example \cite{Hoefsmit74}. These are known for the classical types and combinatorial formulae exist for them. In case of the standard reflection representation, they are listed in \cite[Example 10.5.8]{GP00}. \\

Throughout the previous sections, one observes that the largest eigenvalue always comes from the trivial character. This is no surprise, as the valency of $A_{w_0}$ is the largest eigenvalue, which has the all-one vector as eigenvector. What is remarkable though, is that the standard reflection representation leads to the smallest eigenvalue in almost all cases. As nice as it is, we do not have a good explanation for this phenomenon, and we would like to see one. \\

Another interesting direction of research is to relax the notion of opposition by omitting the condition on the type of the flags. Perhaps the framework of Iwahori-Hecke algebras allows one to obtain results as well, in particular the computation of the eigenvalues of the corresponding Kneser graphs. \\

Finally, we can say something about the exceptional types. The algorithms we described work equally well in these cases. The work is however severely easier as the Weyl groups are groups of fixed size. In theory, a computer could do all the work, but if one wants to do computations by hand, the tables in \cite[Appendix C]{GP00} will prove to be very useful. However, we are not aware of any situation where the Delsarte-Hoffman bound is sharp. For example, in \cite[p129]{Guven12} several bounds are given for single element flags in exceptional types. One of them is a bound of order $q^9$ for points, i.e. type $\{1\}$, in the building with Weyl group $F_4$. Compare this to the results in $\cite{Metsch19}$, where a sharp upper bound of order $q^7$ is found. It remains to be seen if these bounds can prove to be useful for the exceptional geometries.

\noindent
\textbf{Address of the authors}\\
\texttt{\{Jan.De.Beule, Sam.Mattheus\}@vub.ac.be}\\
Department of Mathematics,
Vrije Universiteit Brussel,\\
Pleinlaan 2,
B-1050 Elsene,
Belgium\\

\noindent
\texttt{Klaus.Metsch@math.uni-giessen.de}\\
Justus-Liebig-Universität, 
Mathematisches Institut,\\ 
Arndtstra\ss e 2, 
D-35392, Gie\ss en, 
Germany

\end{document}